\documentclass{article}

\usepackage{amsmath}
\usepackage{amsthm}
\usepackage{amssymb}
\usepackage{amsfonts}
\usepackage[backend=biber,style=alphabetic,maxalphanames=99,maxbibnames=99]{biblatex}
\usepackage{booktabs}
\usepackage{enumitem}
\usepackage{framed}
\usepackage[margin=1in]{geometry}
\usepackage{hyperref}
\usepackage{multicol}
\usepackage{multirow}
\usepackage{quiver}
\usepackage{tcolorbox}
\usepackage{tikz-cd}
\usepackage{xcolor}

\addbibresource{refs.bib}

\tikzcdset{row sep/normal={5em,between origins}}
\tikzcdset{column sep/normal={5em,between origins}}

\definecolor{opheliacolor}{RGB}{82,151,248}
\colorlet{shadecolor}{opheliacolor!15}

\let\oldfootnote\footnote
\renewcommand{\footnote}[1]{\oldfootnote{\textcolor{opheliacolor}{#1}}}

\newlist{notations}{enumerate}{1}
\setlist[notations,1]{label={(N.\arabic*)},leftmargin=4em}
\newlist{assumptions}{enumerate}{1}
\setlist[assumptions,1]{label={(A.\arabic*)},leftmargin=4em}

\newlist{thmcases}{enumerate}{2}
\setlist[thmcases,1]{label={\textbf{Case \arabic*:}},leftmargin=4em}
\setlist[thmcases,2]{label={(\alph*)},leftmargin=3em}

\newlist{enumalpha}{enumerate}{1}
\setlist[enumalpha,1]{label={(\alph*)},leftmargin=3em}

\newlist{itemizeblank}{enumerate}{1}
\setlist[itemizeblank,1]{label={},leftmargin=3em}

\setlist[enumerate,1]{label={(\arabic*)},leftmargin=4em}

\hypersetup{
    colorlinks=true,
    linkcolor=blue,
    filecolor=magenta,
    urlcolor=blue,
}

\theoremstyle{definition}
\newtheorem{theorem}{Theorem}
\newtheorem{corollary}[theorem]{Corollary}
\newtheorem*{corollary*}{Corollary}
\newtheorem{lemma}[theorem]{Lemma}
\newtheorem{proposition}[theorem]{Proposition}
\newtheorem*{theorem*}{Theorem}

\theoremstyle{definition}

\newtheorem{definition}[theorem]{Definition}
\newtheorem{example}[theorem]{Example}

\newtheorem*{observation}{Observation}

\newtheorem{remark}[theorem]{Remark}

\newcommand{\rrrname}{}
\theoremstyle{definition}
\newtheorem*{rrr}{\rrrname}
\newenvironment{rerefresult}[1]
  {\renewcommand{\rrrname}{#1}%
   \begin{rrr}}
  {\end{rrr}}



\DeclareMathOperator{\Arb}{Arb}
\DeclareMathOperator{\bArb}{\overline{Arb}}
\DeclareMathOperator{\Aut}{Aut}

\DeclareMathOperator{\cchar}{char}
\DeclareMathOperator{\Gal}{Gal}
\DeclareMathOperator{\GG}{\mathbb G}
\DeclareMathOperator{\mcI}{\mathcal I}
\DeclareMathOperator{\id}{id}

\DeclareMathOperator{\IMG}{IMG}
\DeclareMathOperator{\bIMG}{\overline{IMG}}

\DeclareMathOperator{\mcO}{\mathcal O}
\DeclareMathOperator{\mcP}{\mathcal P}

\DeclareMathOperator{\PGL}{PGL}
\DeclareMathOperator{\PP}{\mathbb P}

\DeclareMathOperator{\mcQ}{\mathcal Q}
\DeclareMathOperator{\rad}{rad}

\DeclareMathOperator{\ZZ}{\mathbb Z}

\renewcommand{\epsilon}{\varepsilon}
\newcommand{\inv}{^{-1}}
\newcommand{\normal}{\triangleleft}
\newcommand{\odometer}{\omega}
\newcommand{\Odometer}{\Omega}%

\newcommand{\wh}{\widehat}

\makeatletter
\DeclareFontFamily{OMX}{MnSymbolE}{}
\DeclareSymbolFont{MnLargeSymbols}{OMX}{MnSymbolE}{m}{n}
\SetSymbolFont{MnLargeSymbols}{bold}{OMX}{MnSymbolE}{b}{n}
\DeclareFontShape{OMX}{MnSymbolE}{m}{n}{
    <-6>  MnSymbolE5
   <6-7>  MnSymbolE6
   <7-8>  MnSymbolE7
   <8-9>  MnSymbolE8
   <9-10> MnSymbolE9
  <10-12> MnSymbolE10
  <12->   MnSymbolE12
}{}
\DeclareFontShape{OMX}{MnSymbolE}{b}{n}{
    <-6>  MnSymbolE-Bold5
   <6-7>  MnSymbolE-Bold6
   <7-8>  MnSymbolE-Bold7
   <8-9>  MnSymbolE-Bold8
   <9-10> MnSymbolE-Bold9
  <10-12> MnSymbolE-Bold10
  <12->   MnSymbolE-Bold12
}{}

\let\llangle\@undefined
\let\rrangle\@undefined
\DeclareMathDelimiter{\llangle}{\mathopen}%
                     {MnLargeSymbols}{'164}{MnLargeSymbols}{'164}
\DeclareMathDelimiter{\rrangle}{\mathclose}%
                     {MnLargeSymbols}{'171}{MnLargeSymbols}{'171}
\makeatother

\title{Semiconjugacy and Self-Similar Subgroups of pfIMGs}
\author{Ophelia Adams}

\begin{document}
\maketitle
\begin{abstract}
    The profinite iterated monodromy group (pfIMG) is a self-similar group associated to dynamical systems. We show that its proper open self-similar subgroups correspond to highly rigid semiconjugacies, which we partly classify in general. For polynomials, we show that only the twisted Chebyshev maps can arise.  Next, we define and construct self-similar closures of subgroups of pfIMGs, and show that this preserves many group-theoretic properties of the original subgroup. As a consequence, we conclude that pfIMGs with open subgroups satisfying certain properties (e.g. prosolvable or pronilpotent) either satisfy that property themselves, or arise from one of these exceptional semiconjugacies.

    This is applied to answer some questions posed in~\cite{BGJT:specializations} about open Frattini subgroups of pfIMGs: unicritical polynomials of composite degree do not have an open Frattini subgroup, and a polynomial with an open Frattini subgroup is often pro-\(p\).
\end{abstract}

\section{Introduction}

A typical object in arithmetic dynamics is an endomorphism \(f:X\to X\) of a smooth curve defined over a field \(K\). Among the invariants of such a dynamical system are its arithmetic and geometric profinite iterated monodromy groups and its arboreal representations. The two are closely linked: arboreal representations can be realized as decomposition subgroups of the arithmetic profinite iterated monodromy group. In the present work, we study the interaction between the geometry of the original dynamical system and the self-similar group-theoretic structure of the pfIMG.

A version of Odoni's conjecture asks whether Hilbert's Irreducibility Theorem can be extended to the infinite extensions obtained by iterating a single polynomial or rational map, and whether hilbertianity is sufficient: if \(K\) is a hilbertian field, can you find infinitely many \(a\in K\) such that \(K(f^{-\infty}(a))/K\) and \(K(f^{-\infty}(t)/K(t)\) are isomorphic?

In~\cite{BGJT:specializations}, this was answered affirmatively for polynomials over a number field whose geometric profinite iterated monodromy group is pro-\(p\) by reducing the infinite specialization problem to a finite one, at which point the Hilbert Irreducibility Theorem applies.

\subsection{Summary of Results}
In Section~\ref{section: semiconjugacy}, we prove (Theorem~\ref{theorem: self-similar functoriality}) that self-similar subgroups of profinite iterated monodromy groups arise in an essentially functorial fashion from a particular kind of semiconjugacy, an induced quotient of a pullback (see Definition~\ref{definition: dynamical pullback}). The archetypal examples of such quotients are Latt\`es and twisted Chebyshev maps. Applying Riemann-Hurwitz leads to the main result of this section:
\begin{rerefresult}{Theorem~\ref{theorem: classification of open self similar pfIMG quotients}}
    Let \(\bArb\) be the geometric profinite iterated monodromy group of \(f\). Suppose \(H\) is an open self-similar subgroup of \(\Arb\) and let \(N\) be the normal core of \(H\). Let \(L\) be the fixed field of \(N\) and \(Y_N\) the corresponding curve with cover \(\pi_N:Y_N\to \PP^1_{\bar K}\). Then \(f\) is an induced pullback quotient, where \(Y=Y_N\) and \(\pi = \pi_N\).

    In particular, \(\bArb/N\) is either a finite subgroup of automorphisms of an elliptic curve and \(f\) is Latt\`es, or \(\bArb/N\) is one of the finite subgroups of \(\PGL_2\): cyclic, dihedral, \(A_4\), \(S_4\), or \(A_5\). In the latter case, \(\pi_N\) is ramified over two or three points, is determined by those ramification indices, and all three branch points must also be pre-periodic and branch points for \(f\).

    If \(f\) is a polynomial and \(H\neq \bArb\) then \([\bArb:N] = 2\), \(H=N\), \(f\) is a twisted Chebyshev map, \(\bArb\) is the infinite pro-\(d\) dihedral group, and \(H\) is the rotation subgroup, generated by the odometer.
\end{rerefresult}

Motivated by this, we define in Section~\ref{section: self similar properties} the self-similar closure and self-similar properties of subgroups of general self-similar groups (Definitions~\ref{definition: self-similar closure}~and~\ref{definition: self-similar property}). For fractal groups, which includes all profinite iterated monodromy groups, we establish some sufficient criteria for properties to be self-similar (Proposition~\ref{proposition: construction criteria for self similarity}, Proposition~\ref{proposition: formation implies ssc}) and apply them prove that various properties of open normal subgroups are self-similar: pronilpotence in Proposition~\ref{proposition: pronilpotent normal is ssc} and prosolvability and being pro-\(p\) in Corollary~\ref{corollary: induced ssc examples}.

In Section~\ref{section: main}, we combine the previous sections to prove our main result:
\begin{rerefresult}{Corollary~\ref{corollary: main results}}
Suppose \(f\) a tamely ramified rational map. If \(f\)is either a rational function and not a proper quotient of a dynamical pullback, or if \(f\) is a polynomial and not twisted Chebyshev, then:
    \begin{enumerate}
        \item If \(\bArb\) is virtually torsion, then it is torsion.
        \item If \(\bArb\) is virtually pro-\(p\), then \(G\) pro-\(p\).
        \item If \(\bArb\) is virtually prosolvable, then it is prosolvable.
        \item If \(\bArb\) is virtually pronilpotent, then it is pronilpotent.
        \item If \(\bArb\) contains the standard odometer \(\omega\) and \(\llangle \omega\rrangle\) is open, then \(\bArb\) is generated by the odometer.
        \item If \(\bArb\) contains the standard odometer \(\omega\) and the center of \(\bArb\) is open, then \(G\) is abelian.
    \end{enumerate}
    
    In Case (6), \(G\) is procyclic and generated by the odometer.
\end{rerefresult}
This follows from the more general Theorem~\ref{theorem: self-similar upgrades}: self-similar closure will spread a self-similar property from an open subgroup to the entire group unless the rational map is one of the exceptional quotients in Theorem~\ref{theorem: classification of open self similar pfIMG quotients}.

Finally, we apply our main results to answer questions raised in~\cite{BGJT:specializations}. In that paper, the authors show that Odoni's conjecture can be reduced to Hilbertianity for post-critically finite rational maps with a pro-\(p\) geometric profinite iterated monodromy group, which rests largely on the fact that the Frattini subgroup will be open. They ask whether unicritical polynomials of non-prime-power degree can have an open Frattini subgroup. We answer this negatively, and also show that the geometric pfIMG of a post-critically infinite polynomial cannot be open in Corollary~\ref{corollary: main special cases}. The main point is that the Frattini subgroup is pronilpotent, so the results of the present paper limit the possibilities for it to be open without the entire pfIMG being pronilpotent. Polynomials pfIMGs contain a self-centralizing element, the odometer, but elements of coprime order in a pronilpotent group must commute. Threading this needle is difficult for pronilpotent groups, and we prove a more general sufficient criterion in terms of the critical portrait:

\begin{rerefresult}{Corollary~\ref{corollary: open frattini implies pro-p}}
    Suppose \(f\) is a polynomial over \(K\) of degree \(d\), tamely ramified, and not conjugate to a twisted Chebyshev map. Let \(\bArb\) be its profinite iterated monodromy group, and assume it contains the standard odometer. Let \(B\) denote the (strict) critical orbit.

    Suppose there is some \(b\) in \(B\) such that \(f\inv (b)\) contains no critical points and \(f\inv(b)\cap B\) has just one element. If \(G\) is virtually pronilpotent, then it is pro-\(p\).
\end{rerefresult}

For completeness, we generalize the main pro-\(p\) specialization result of~\cite{BGJT:specializations} to all rational maps with an open Frattini subgroup. In~\cite{BGJT:specializations}, the authors directly show that the arithmetic pfIMG is pro-\(p\) if the geometric pfIMG is pro-\(p\) and the ground field is a number field. We articulate the separation between the geometric and arithmetic pieces differently. First, we prove that if the geometric pfIMG has an open Frattini subgroup, then Odoni's conjecture over the constant field extension \(\wh K_f\) reduces to hilbertianity of \(\wh K_f\). If \(K\) is hilbertian and \(f\) is PCF, then \(K_f\) is a small extension (c.f.~\cite[Proposition 19.2.1]{FJ:fieldarithmetic}), which essentially appears in~\cite{BGJT:specializations}. For the special case of polynomials, we complement this with an entirely different argument that that \(K_f\) is hilbertian when \(f\) is PCF and \(K\) is hilbertian, without assuming \(K\) is a number field. From this, we conclude that the hilbertian reduction over \(\wh K_f\) descends to \(K\). In particular, this isolates one point at which more than hilbertianity is required, but suggests a way forward over more general hilbertian fields.

\subsection{Context}

Arboreal representations were introduced by Odoni~\cite{odoni:iterates,odoni:primes,odoni:realising} for \(X=\PP^1_K\), who used them to quantify primes in the orbits: given a rational point \(a\in \PP^1_{\bar K}\), the absolute Galois group of \(K\) acts on the preimages of \(a\) by iterates of \(f\), the roots of \(f^n(x) = a\), which typically form a tree. The Chebotarev density theorem links the group action to divisors of \(f^n(a)\); roughly speaking, ``large'' actions mean there are ``few'' prime divisors. Odoni calculated some of these actions, including the generic case, giving rise to the various Odoni conjectures/questions about the largeness of these representations~\cite{jones:survey}. One can view the \(\ell\)-adic Tate modules of an elliptic curve as special cases of an arboreal representation with \(X\) is an elliptic curve, \(f\) its multiplication-by-\(\ell\) endomorphism, and the identity as a base point, because the preimages of the identity comprise the \(\ell\)-power torsion. From this point of view, Odoni's conjectures are analogous Serre's Open Image Theorem. Since then, some cases of these conjectures have been establish~\cite{BJ:odoni,looper:odoni,specter:odoni}, as well as more general investigations~\cite{PA:abelian,FP:quadratic,FOZ:abelian,AMT:cubicpcf,ABCCF:quadraticpcf} into the structure -- ramification and local properties especially~\cite{AHPW:local.arboreal,BIJJLMRSS:pcf,AHM:finite.ram,epstein:pcf,berger:iterated,ingram:uniformization,HJ:higherramification,adams:sen,adams:nos}.

Profinite iterated monodromy groups (pfIMGs) were introduced by Pink, who studied them in great detail in the quadratic case, calculating the geometric pfIMG in terms of the critical portrait, using this description to quantify the associated constant field extension~\cite{pink:pcf,pink:pci,pink:lift}. The discrete counterpart was used to solve the twisted rabbit problem in complex dynamics~\cite{BN:rabbit}. Less is known about pfIMGs, although the geometric pfIMGs have been calculated in some interesting families~\cite{AH:unicritical,ejder:imgs,BEK:belyi,KNR:compositions,HLW:degreethree}. It turns out that pfIMGs can be realized as a special case of an arboreal representation, but at the same time all arboreal representations can be located within the pfIMG as decomposition subgroups. They are a natural overgroup for a more precise version of Odoni's conjecture: over a number field, are (almost all) decomposition subgroups of a pfIMG open? One might hope that a better understanding of the group theory of pfIMGs would translate into information about arboreal representations. Since the pfIMG has more structure than a typical arboreal representation, one might hope that it is easier to understand. For example, it is a self-similar group, and the self-similarity is induced from the geometry, which cannot happen for most arboreal representations. The geometric pfIMG is equipped with an outer Galois action and in~\cite{AH:unicritical}, the outer action was used to characterize the constant field extension associated to polynomial pfIMGs, an important invariant of the dynamical system -- among other things, the constant field extension appears as a quotient of every arboreal representation.

\subsection{Notation}
For reference, we summarize some notation here. While we largely follow the notation and conventions from~\cite{AH:unicritical} when discussing iterated monodromy groups, we have switched to a ``right-handed'' semidirect product, a corresponding right action on the tree, and a zero-indexing of the labeling.

By pullback, we refer to the normalization of the (usual) fiber product, so that our pullbacks of curves are always smooth.

\begin{enumerate}
    \item \(K\) is a field.
    \item \(t\) is a transcendental over \(K\).
    \item \(\bar K\) is the algebraic closure of \(K\) in \(\bar K(t)\), hence a separable closure of \(K\).
    \item \(\Gamma_K = \Gal(\bar K/K)\) is the absolute galois group of \(K\).
    \item \(\overline{K(t)}\) is a fixed separable closure of \(K(t)\).
    \item \(f\) is a rational function over \(K\) such that the extension \(\bar K(x)\) of \(\bar K(t)\) given by \(f(x) = t\) is tamely ramified,
    \item \(f^{-\infty}(t)\) is the set of preimages of \(t\) by iterates of \(f\) in \(\overline{K(t)}\), including \(t\). In other words, \(f^{-\infty}(t)\) comprises \(t\) and all the roots of the polynomials \(f^n(x) = t\) in some fixed algebraic closure. which is naturally endowed with the structure of a directed graph by \(x\to y\) whenever \(f(x) = y\). By our tameness assumption, this graph is a \(d\)-regular tree rooted at \(t\).
    \item \(\IMG\) is the arithmetic profinite iterated monodromy group associated to \(f\) over \(K\), the galois group of \(K(f^{-\infty}(t))/K(t))\)
    \item \(\bIMG\) is the geometric  profinite iterated monodromy group associated to \(f\), the galois group of \(\bar K(f^{-\infty}(t))/\bar K(t))\), identified with a subgroup of \(\IMG\) by the natural restriction
    \item \(\Lambda\) is a choice of path data  as in~\cite[Section 3]{AH:unicritical} but numbered from \(0\) to \(d-1\) rather than \(1\) to \(d\).
    \item \(T_{f,\Lambda}\) is the tree of preimages \(f^{-\infty}(t)\) equipped with the labeling induced by \(\Lambda\) as in~\cite[Section 3]{AH:unicritical}. One can identify \(T_{f,\Lambda}\) with its set of branches, or equivalently with left-infinite words in the alphabet \(\{0,1,...,d-1\}\).
    \item \(\Aut T_{f,\Lambda}\) is the set of automorphisms of \(T_{f,\Lambda}\) which respect its tree structure. With respect to the labeling, it has a self-similar description as
    \[\Aut T_{f,\Lambda} \cong \Aut T_{f,\Lambda} \wr S_d = (\Aut T_{f,\Lambda})^d \rtimes S_d,\]
    so \(g\in \Aut T_{f,\Lambda}\) can be written in ``right-handed'' form as
    \[(g_0,...,g_{d-1}) \sigma_g\]
    where \(g_i \in \Aut T_{f,\Lambda}\) and \(\sigma_g \in S_d\). The right action on a left-infinite word \(w\) is as follows: write \(w=vx\) with \(v\) a left-infinite word and \(x\in \{0,1,...,d-1\}\), then
    \[
        (vx)^g = v^{g_x} x^{\sigma_g}.
    \]
    \item \(\odometer\) is the standard odometer in \(\Aut T_{f,\Lambda}\), and \(\Odometer\) is the topological closure of the subgroup it generates. Letting \(\sigma=(0\,1\,...\,d-1)\) denote the standard \(d\)-cycle, the standard odometer is defined by the wreath recursion
    \[\odometer = (1,...,\odometer) \sigma.\]
    If we identify words with \(d\)-adic integers by
    \[...a_2a_1a_0 \longmapsto \sum_{n=0}^\infty a_nd^n,\] the action of the standard odometer coincides with adding \(1\); the carry is precisely the self-similar reappearance of the odometer in its defining wreath recursion.
    \item \(\Arb\) and \(\bArb\) are the images of \(\IMG\) and \(\bIMG\) in  subgroups of the automorphism group of \(T_{f,\Lambda}\).
\end{enumerate}

Let \(G\) be a \(d\)-regular self-similar group, which comes with a tree \(T\), \(B\) the set of branches, and \(\Lambda\) the paths realizing the self-similarity. Let \(v\) be a vertex of \(T\).
\begin{enumerate}
    \item \(T_v\) is the subtree of \(T\) lying over \(v\), and given an integer \(n\), the \(T_{(n)}\) is the union of \(T_w\) as \(w\) ranges over the vertices at level \(n\),
    \item \(B_v\) is the set of branches passing through \(v\),
    \item \(G_v\) is the stabilizer of \(v\) in \(G\), and given an integer \(n\), the level \(n\)-stabilizer \(G_{(n)}\) is the intersection of \(G_w\) as \(w\) ranges over the vertices at level \(n\),
    \item \(\lambda_v\) is the path in \(\Lambda\) associated to \(v\),
    \item \(\pi_v:G_v\to G\) is the map from \(G_v\) to \(G\) induced by \(\lambda_v\),
    \item \(G^v\) is the image of \(\lambda_v\) and for any vertex \(w\) at level \(n\), the image of \(\lambda_w\) restricted to \(G_{(n)}\) is denoted \(G^{(n),w}\). If \(G\) acts transitively on level \(n\), \(G^{(n),w}\) does not depend on \(w\), and we simply write \(G^{(n)}\).
\end{enumerate}
Our paths go in the opposite direction of those in~\cite{AH:unicritical}, one convention is simply the inverse of the other. The choice amounts to deciding whether \(\lambda_v\) and \(\pi_v\) should be parallel or not. The latter is convenient for composition and labeling, whereas the former is more suitable for our upcoming discussion of functoriality.

\section{Preliminaries}

We briefly recall the notion of a self-similar group given by a self-similar action on a tree and their basic features, mainly in the special case of profinite iterated monodromy groups. For general self-similar groups, a standard reference is~\cite{nekrashevych:ssgs}, though in some cases, our definitions are narrower. Facts about profinite iterated monodromy groups are drawn from Section 3 of \cite{AH:unicritical}, and Section 2 of the same also contains a summary of self-similar groups. 

Self-similar groups arise from self-similar actions on self-similar sets. In our case, the actions are on rooted regular trees.
\begin{definition}
    The \(d\)-regular rooted tree of height \(n\), denoted \(T(d^n)\) is the directed rooted tree inductively constructed as follows. When \(n=0\), we let \(T_{d^0}\) be a graph with one vertex, its root, and no edges. For \(n > 1\), we construct \(T(d^n)\) from \(T(d^{n-1})\) by adjoining \(d^n\) new vertices by adding \(d\) new vertices for each leaf \(v\) of \(T_{d^{n-1}}\) and with arrows from each to \(v\).

    When \(m\leq n\), the construction gives a natural graph injections from \(T(d^m)\) into \(T(d^n)\), and the direct limit is a directed graph denoted \(T(d^\infty)\). Any directed graph isomorphic to \(T(d^\infty)\) is called \emph{an infinite \(d\)-regular rooted tree}. A given \(T(d^n)\) is referred to as its \emph{level \(n\)} truncation, whose leaves comprise the \emph{vertices at level \(n\)}. There are also natural graph surjections from \(T(d^n)\) to \(T(d^m)\) given by deleting the vertices and edges of \(T(d^n)\) which are not in \(T(d^m)\), whose inverse limit \(B(d^\infty)\) the set of branches of \(T(d^\infty\).
\end{definition}

Edge-preserving maps between infinite \(d\)-regular rooted trees correspond to continuous endomorphisms of their branch space, so the two objects are broadly equivalent.
\begin{definition}
    Let \(T\) be an infinite \(d\)-regular rooted tree, and \(B\) its set of branches. Given a vertex \(v\) of \(T\), we define some subsets of each space:
    \begin{enumerate}
        \item \(T_v\) is the subtree over vertices of \(T\) with a path to \(v\),
        \item \(B_v\) is the set of branches in \(B\) which pass through \(v\), and can be identified with the set of branches of \(T_v\),
        \item \(T_{(n)}\) is the union of all \(T_v\) as \(v\) ranges over the vertices at level \(n\),
        \item \(B_{(n)}\) is the union of all sets \(B_v\) as \(v\) ranges over the vertices at level \(n\).
    \end{enumerate}
    The sets \(B_v\) are open (and closed) in \(B\) and form a basis for its topology. 
\end{definition}

These sets realize the self-similarity: \(T_v\) is itself an infinite \(d\)-regular rooted tree and its branches can be identified with \(B_v\). How one identifies \(T\) and \(T_v\) or \(B\) and \(B_v\) is not necessarily unique, though one determines the other. In our setting, the trees and branches arise from the fibers of covers, so the self-similarities are induced by (\'etale) paths.
\begin{definition}
    Let \(T\) be the infinite \(d\)-regular rooted tree with root \(t\). Given a vertex \(v\), an \emph{(\'etale) path from \(v\) to \(t\)} is a homeomorphism \(\lambda: T_v \to T\). A \emph{complete set of (\'etale) paths} is a choice of path \(\lambda_v\) from each vertex \(v\).

    Given paths \(\lambda_v\) and \(\lambda_w\) from vertices \(v\) and \(w\), respectively, their composition \(\lambda_w\circ \lambda_v\) is a path from \(u = \lambda_v\inv \circ \lambda_w\inv(t)\) to \(t\). We say that a complete set of \'etale paths is \emph{consistent} if \(\lambda_w \circ \lambda_v = \lambda_u\).
\end{definition}

A complete and consistent set of paths is determined by the paths from vertices just at the first level -- with respect to the original topological picture, this comes from iteratively lifting the paths over each other.

\begin{proposition}\label{proposition: unordered labeling}
    Let \(T\) be an infinite \(d\)-regular rooted tree. For each vertex \(v\) at level 1, choose a path \(\lambda_v:T_v\to T\). For each vertex \(w\) at level \(n\) there is a unique choice of level one vertices \(w_i\) such that the composition
    \[\lambda_{w_n}\circ ... \circ \lambda_{w_1}\]
    is a path from \(w\) to \(t\).
\end{proposition}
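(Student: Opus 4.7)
The plan is to prove this by induction on the level $n$ of the vertex $w$.

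For the base case $n=0$, the vertex $w$ equals $t$, so the empty composition (the identity on $T$) is the unique path required. For $n=1$, the vertex $w$ is itself a level-one vertex, and the prescribed path $\lambda_w$ is the only single-factor composition whose domain contains $w$, so the claim is immediate. In general, I would observe that for the expression $\lambda_{w_n}\circ \cdots \circ \lambda_{w_1}$ to even be well-defined on $w$, we need $w\in T_{w_1}$. Since $w_1$ lies at level one and $w$ has level $n\geq 1$, the condition $w\in T_{w_1}$ forces $w_1$ to be the unique level-one ancestor of $w$. This pins down the innermost factor without any choice.

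Having fixed $w_1$ in this way, the path $\lambda_{w_1}:T_{w_1}\to T$ restricts to a homeomorphism sending $w$ to some vertex $w'$ at level $n-1$ in $T$, because $\lambda_{w_1}$ preserves the level grading (it sends the root $w_1$ to the root $t$ and must respect the edge structure). The inductive hypothesis applied to $w'$ yields unique level-one vertices $w_2,\ldots,w_n$ such that $\lambda_{w_n}\circ \cdots \circ \lambda_{w_2}$ is a path from $w'$ to $t$. Composing with $\lambda_{w_1}$ produces a homeomorphism from $T_w\subset T_{w_1}$ to $T$ sending $w$ to $t$; since $T_w$ is a $d$-regular rooted tree and the composition is a level-preserving homeomorphism with the correct image, it is a path from $w$ to $t$ in the sense defined.

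Uniqueness then follows in the same inductive step: any other valid sequence would begin with a level-one vertex containing $w$ in its subtree, which by the argument above must again be the level-one ancestor of $w$, and then the remaining factors are uniquely determined by the inductive hypothesis applied to $\lambda_{w_1}(w)$. The main conceptual point --- and perhaps the only substantive one --- is that the level-one factor is forced by the requirement that the innermost path be defined at $w$, and all subsequent choices are forced by the same observation at subsequent levels. There is no real obstacle, only the slight bookkeeping of verifying that $\lambda_{w_1}$ sends level $k$ in $T_{w_1}$ to level $k$ in $T$, which is built into the definition of path as a homeomorphism of rooted trees.
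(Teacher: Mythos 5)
Your proof is correct and follows the same route as the paper: the innermost factor $\lambda_{w_1}$ is forced because the composition must be defined at $w$, so $w_1$ is the unique level-one ancestor of $w$, and then $\lambda_{w_1}(w)$ drops one level, allowing induction to determine $w_2,\ldots,w_n$ uniquely. Your version simply writes out the base cases and the level-preservation bookkeeping that the paper leaves implicit.
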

\begin{proof}
    Such a composition is only defined at \(w\) if and only if \(w_1\) lies under \(w\), and there is precisely one such vertex at level \(1\). Then \(\lambda_{w_1}(w)\) is one level lower, so by induction we obtain the remaining \(w_2,...,w_n\).
\end{proof}
This construction allows us to view any vertex \(w\) of the tree as being labeled by vertices at level \(n\), once paths are chosen. One can then label the level one vertices by \(\{0,1,...,d-1\}\), which amounts to ordering them, and thereby obtain a consistent labeling of the entire tree by words in those letters.
\begin{definition}
    Let \(T\) be an infinite \(d\)-regular rooted tree. A \emph{choice of path data \(\Lambda\) for \(T\)} is a bijection between \(\{0,1,...,d-1\}\) and the vertices at level \(1\), and a choice of \(d\) paths \(\lambda_v:T_v\to T\), one for each vertex \(v\) on level one.
\end{definition}

A choice of path data \(\Lambda\) allows us to express group actions on the tree ``in coordinates''. Namely, there is an isomorphism
    \[\Lambda: W \overset\sim\to W^d \rtimes S_d\]
which can be given explicitly as follows. Let \(\sigma_g\) denote the restriction of \(g\) to level one, viewed as a permutation in \(S_d\) by way of the ordering. Notice \(g \sigma_g\inv\) fixes level one of the tree, and therefore acts separately on each of the \(d\) copies of the infinite \(d\)-regular rooted tree over those vertices. To be precise, let \(\lambda_i\) denote the path from the vertex labeled by \(i\) to \(t\). Then we define ``coordinates'' by
\[w_i = \lambda_i \circ (g\sigma_g\inv) \circ \lambda_i\inv.\]
The \(g_i\) are in \(W\), and correspond to the action of \(g\sigma_g\inv\) on the tree over the \(i\)th vertex when that tree is identified with the original tree by the path \(\lambda_i\). Finally, we define
\[\Lambda(g) = (g_0,...,g_{d-1}) \sigma_g.\]
The natural right action on left-infinite sequences of digits in \(\{0,1,...,d-1\}\) is given by 
\[(vx)^g = (vx)^{(g_0,...,g_{d-1})\sigma_g} = v^{g_x} x^{\sigma_g}.\]

The map \(\pi_i\) given by \(\pi_i(g)=g_i\) is a homomorphism when restricted to \(W_i\), the stabilizer of the \(i\)th vertex. Iteratively applied to the paths given by Proposition~\ref{proposition: unordered labeling}, we obtain for every vertex \(v\) of \(T\) a map \(\pi_v\) from the stabilizer \(W_v\) of \(v\) to \(W\). These are call the \emph{self-similarity} maps, or coordinate maps, .

\begin{definition}
    Let \(T\) be an infinite \(d\)-regular rooted tree, with \(\Lambda\) a choice of path data. Let \(G\) be a group which acts faithfully on \(T\), viewed as an inclusion of \(G\) into \(\Aut T\). Then \(G\) is \emph{self-similar with respect to \(\Lambda\)} if, for every vertex \(v\) of \(T\), the image of the restriction of \(\pi_v\) to the stabilizer \(G_v\) is contained in \(G\).

    If \(G\) is self-similar, we denote the image \(\pi_v(G_v)\) by \(G^v\). If \(G^v = G\), then we say that \(G\) is \emph{fractal with respect to \(\Lambda\)}.
\end{definition}
By construction, to determine whether \(G\) is self-similar (resp. fractal), it suffices to check that \(\pi_v(G_v)\subseteq G\) (resp. \(\pi_v(G_v) = G\) just for the vertices \(v\) on level one.

Finally, we recall how profinite iterated monodromy groups can be interpreted as self-similar groups.
\begin{theorem}[{\cite[Section 3]{AH:unicritical}}]
    Let \(f\) be a tamely ramified rational map over \(K\) and \(t\) a transcendental over \(K\). The set \(T\) of all preimages of \(t\) by iterates of \(f\), written \(f^{-\infty}(t)\), naturally has the structure of an infinite \(d\)-regular tree rooted at \(t\), with edges \(x\to y\) when \(f(x) = y\). The Galois group \(\IMG = \Gal(K(f^{-\infty}(t))/K(t))\) acts faithfully and transitively on \(T\). Upon picking a common algebraic closure, \(\bIMG = \Gal(K(f^{-\infty}(t))/K(t))\) can be viewed as a subgroup of \(\IMG\).
    
    The level one vertices are the roots of \(f(X) - t\) over \(K(t)\), so let \(t_0,...,t_{d-1}\) be an ordering of them. Any choice of field isomorphisms \(\lambda_i: \bar K(f^{-\infty}(t))\to \bar K(f^{-\infty}(t))\) taking \(t_i\) to \(t\) induces paths from \(t_i\) to \(t\). This gives a choice of path data \(\Lambda\), which induces an inclusion of \(\bIMG\) and \(\IMG\) into \(\Aut W\) such their respective images \(\bArb\) and \(\Arb\) are self-similar and fractal with respect to \(\Lambda\).
\end{theorem}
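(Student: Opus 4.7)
The plan is to verify the statement piece by piece, much of which is standard Galois-theoretic bookkeeping once the tree is set up correctly.

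First I would establish the tree structure on $f^{-\infty}(t)$. The tameness assumption ensures that each iterate $f^n(X) - t$ (cleared of denominators) is separable of degree $d^n$ over $\bar K(t)$, so each level has exactly $d^n$ distinct roots. The incidence $x\to y$ iff $f(x)=y$ gives a $d$-regular rooted tree. The field $K(f^{-\infty}(t))$ is by definition generated over $K(t)$ by the vertices of this tree, so $\IMG$ acts faithfully. For transitivity on level $n$, observe that $K(X)/K(t)$ via $f^n(X) = t$ has degree $d^n$, forcing $f^n(X) - t$ to be irreducible over $K(t)$; hence $\Gal$ permutes its roots transitively.

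Next I would handle the inclusion $\bIMG \hookrightarrow \IMG$. Restriction sends $g \in \Gal(\barK(f^{-\infty}(t))/\barK(t))$ to $g|_{K(f^{-\infty}(t))}$; injectivity follows because $\barK(f^{-\infty}(t))$ is the compositum $\barK(t) \cdot K(f^{-\infty}(t))$, so a $\barK(t)$-automorphism fixing $K(f^{-\infty}(t))$ must be trivial.

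For the self-similarity, fix $\lambda_i$ sending $t_i \mapsto t$. Because $\lambda_i$ is a $\barK$-isomorphism commuting with $f$, it restricts to a tree isomorphism from $f^{-\infty}(t_i)$ (the subtree $T_{t_i}$) onto $f^{-\infty}(t)$, which is exactly the path data. For $g \in \bIMG_{t_i}$, the conjugate $\lambda_i \circ g \circ \lambda_i\inv$ is a $\barK$-automorphism of $\barK(f^{-\infty}(t))$ fixing $\lambda_i(t_i) = t$, hence an element of $\bIMG$; this is $\pi_i(g)$, giving self-similarity. The same argument works for $\IMG$ provided $\lambda_i$ is chosen to be a $K$-isomorphism, or by base change one reduces the arithmetic case to the geometric one. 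As noted in the excerpt, checking at level one suffices.

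Finally, fractality amounts to $\pi_i(\bIMG_{t_i}) = \bIMG$. Under $\lambda_i$, the stabilizer $\bIMG_{t_i} = \Gal(\barK(f^{-\infty}(t))/\barK(t_i))$ is identified with $\Gal(\barK(f^{-\infty}(t))/\barK(t))$ after restriction to $\barK(f^{-\infty}(t_i))$. Since $\barK(f^{-\infty}(t))/\barK(t_i)$ is Galois and $\barK(f^{-\infty}(t_i))$ is an intermediate field which is Galois over $\barK(t_i)$ (its Galois-closure being itself), restriction $\Gal(\barK(f^{-\infty}(t))/\barK(t_i)) \twoheadrightarrow \Gal(\barK(f^{-\infty}(t_i))/\barK(t_i))$ is surjective by standard Galois theory; composing with $\lambda_i$ gives $\pi_i(\bIMG_{t_i}) = \bIMG$. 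The main subtlety is verifying that $\barK(f^{-\infty}(t_i))$ is genuinely the fixed field one needs and that its Galois structure lines up with $\bIMG$ through $\lambda_i$; once that identification is clean, everything else is formal. The remaining work is entirely contained in Section 3 of~\cite{AH:unicritical}, which I would cite for the detailed compatibilities.
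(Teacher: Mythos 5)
Your proposal is correct and, on the one point the paper actually argues (fractality), uses the same mechanism as the paper: identify the vertex stabilizer with $\Gal(\barK(f^{-\infty}(t))/\barK(t_i))$, use surjectivity of restriction onto the Galois group of the normal subextension $\barK(f^{-\infty}(t_i))/\barK(t_i)$ (the paper phrases this as lifting $g'$ to $\tilde g$), and transport by $\lambda_i$. The remaining items you verify (tree structure, faithful transitive action, the inclusion $\bIMG\subseteq\IMG$, self-similarity) are exactly the parts the paper defers to~\cite{AH:unicritical}, so there is no substantive divergence.
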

\begin{proof}
    Only the claim that \(\bArb\) and \(\Arb\) are fractal does not appear directly in~\cite{AH:unicritical} is that \(\bArb\) and \(\Arb\) are fractal, rather than merely self-similar. 

    Let \(g \in \Arb\). Given a coordinate \(i\), we seek some \(\tilde g \in \Arb\) such that \(\tilde g = (...,g,...)\tau\) where \(g\) appears in the \(i\)th coordinate and \(\tau\) fixes \(i\). Observe that \(\Gal(K(f^{-\infty}(t_i))/K(t_i)\) is isomorphic to \(\Arb\) by way of the path from \(t_i\) to \(t\), so that it has an element \(g'\) which acts as \(g\) on the tree over \(t_i\). By the lifting property for normal extensions, we can extend \(g'\) to an element \(\tilde g\) of \(\Gal(K(f^{-\infty}(t))/K(t)\) which acts as \(g\) when restricted to \(K(f^{-\infty}(t_i))\). This lift necessarily fixes \(t_i\), meaning \(\sigma_{\tilde g}\) fixes \(i\), and by construction, the \(i\)th coordinate is \(g\).
\end{proof}

\section{Semiconjugacy and Subgroups of pfIMGs}\label{section: semiconjugacy}

In this section, we show that self-similar subgroups of profinite iterated monodromy groups correspond to a particular kind of semiconjugacy. Then, we show that these can all be obtained as quotients of semiconjugacies by \emph{normal} covers. Using this, we partly classify these exceptional relationships.

\subsection{Induced Semiconjugacies}

\begin{theorem}
\label{theorem: self-similar functoriality}
    Let \(K\) be a field, \(f\) a tamely ramified rational function over \(K\), and \(t\) a transcendental over \(K\). Let \(\Arb\) be the (arithmetic) profinite iterated monodromy group of \(f\) over \(K\) associated to some path data. Let \(\lambda_0:K(t_0)\to K(t)\) be the field isomorphism for the first coordinate, where \(t_0=\lambda_0\inv(t)\) is the associated preimage of \(t\).

    If \(H\) an open self-similar subgroup of \(\Arb\), then \(H\) is fractal. Let \(L_0\) and \(L\) be the fixed fields of \(H_0\) and \(H\), respectively, in \(K(f^{-\infty}(t))/ K(t)\). Then \(L_0\) contains \(L\), and the self-similarity field isomorphism \(\lambda_0\) takes \(L_0/ K(t_0)\) isomorphically onto \(L/K(t)\). Moreover, \(L_0 = LK(t_0) = L(t_0)\) and \(L\) is linearly disjoint from \(K(t_0)\) over \(K(t)\). Additionally, for all sufficiently large \(n\), the field \(K(f^{-n}(t))\) contains \(L\).

    Geometrically, there is a smooth integral curve \(Y_H\) with maps \(f_H:Y_H\to Y_H\) and \(\pi_H:Y_H\to \PP^1_{K}\) such that the following diagram commutes and is a pullback
    \[\begin{tikzcd}
    	{Y_H} & {Y_H} \\
    	{\PP^1_K} & {\PP^1_K}
    	\arrow["{f_H}", from=1-1, to=1-2]
    	\arrow["{\pi_H}"', from=1-1, to=2-1]
    	\arrow["{\pi_H}", from=1-2, to=2-2]
    	\arrow["f"', from=2-1, to=2-2]
    \end{tikzcd}\]
    Along with maps \(\eta_{H,n}: X_{(n)} \to Y_H\). The profinite iterated monodromy group of \(f_H\) can be identified with the subgroup \(H\).

    Conversely, if \(L/K(t)\) is a finite tamely ramified cover of \(K(t)\) linearly disjoint from \(K(t_0)\) over \(K(t)\) such that \(L(t_0)\) over \(K(t_0)\) is isomorphic to \(L\) over \(K(t)\), and \(L\) is contained in \(K(f^{-n}(t))\) for some \(n\). Let \(H\) be the subgroup of \(\Arb\) fixing \(L\). Then \(H\) is self-similar as a subgroup of \(\Arb\).

    Geometrically, if there is a pullback square as above and a positive integer \(n\) such that \(X_{(n)} \to Y_H\), then the path data for \(f\) lifts to path data for \(g\) such that \(\Arb_g\) is a self-similar subgroup of \(\Arb_f\).
\end{theorem}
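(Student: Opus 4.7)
The strategy is to first establish fractality by an index count, then translate between open subgroups of $\Arb$ and finite subextensions of $K(f^{-\infty}(t))/K(t)$ via Galois theory, and finally geometrize the resulting field-theoretic conditions into a pullback diagram of covers.

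For fractality: given that $H \subseteq \Arb$ is open and self-similar, I have $H^0 := \pi_0(H_0) \subseteq H$. Because $\Arb$ itself is fractal, $\pi_0 : \Arb_0 \to \Arb$ is surjective, so there is the chain
\[ [\Arb : H^0] \;\leq\; [\Arb_0 : H_0] \;\leq\; [\Arb : H], \]
where the first inequality is surjectivity of $\pi_0$ applied to the coset partition, and the second comes from restricting the coset partition of $\Arb$ by $H$ along $H_0 = \Arb_0 \cap H \hookrightarrow \Arb_0$. But $H^0 \subseteq H$ gives the reverse inequality on indices, so all three quantities are equal. Equality of the outer two forces $H^0 = H$ (fractality), and equality of the inner pair forces $\Arb_0 \cdot H = \Arb$, i.e. $H$ is transitive on level one.

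Next I would read each of these equalities through the Galois correspondence. The equality $\Arb_0 H = \Arb$ is exactly $L \cap K(t_0) = K(t)$, that is, linear disjointness of $L$ and $K(t_0)$ over $K(t)$. The equality $\Arb_0 \cap H = H_0$ combined with Galois theory yields $L \cdot K(t_0) = L_0 = L(t_0)$. The isomorphism $\lambda_0 : L_0/K(t_0) \to L/K(t)$ is the field incarnation of $\pi_0(H_0) = H$: conjugation by $\lambda_0$ sends $H_0$ to $H$, hence carries the corresponding fixed subfields to each other. The containment $L \subseteq K(f^{-n}(t))$ for large $n$ is automatic, since $L/K(t)$ is finite and $K(f^{-\infty}(t))$ is the directed union of the $K(f^{-n}(t))$.

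For the geometric content, $L/K(t)$ and $L_0/K(t_0)$ define smooth covers $\pi_H : Y_H \to \PP^1_K$ and its base change along $f$. The self-similarity isomorphism $\lambda_0 : L_0 \cong L$ produces on the level of curves a morphism $f_H : Y_H \to Y_H$ making the square commute; that the square is a pullback (in the sense of normalized fiber product described in the notation) follows from the function-field identity $L \otimes_{K(t)} K(t_0) = L_0$ together with $L_0$ being regular over $K(t_0)$. The maps $\eta_{H,n} : X_{(n)} \to Y_H$ come from the inclusions $L \subseteq K(f^{-n}(t))$. For the converse, the same dictionary runs in reverse: given the pullback diagram and $X_{(n)} \to Y_H$, the subgroup $H \leq \Arb$ fixing $L$ inherits $\pi_0(H_0) = H$ from the pullback, so $H$ is self-similar; lifting path data from $f$ to $f_H$ is possible precisely because the square is a pullback, so preimages of a vertex of $Y_H$ match bijectively with preimages of its image in $\PP^1_K$.

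The main obstacle I expect is keeping the conventions straight between left- and right-handed actions, the direction of $\lambda_0$, and whether the coordinate map $\pi_0$ is defined by conjugation with $\lambda_0$ or $\lambda_0^{-1}$. Translating ``$H^0 = H$'' into ``$\lambda_0$ identifies $L_0$ with $L$'' requires being careful that the $\lambda_0$ used to define the coordinate map and the $\lambda_0$ used to match up fixed fields are the same. Once the bookkeeping is settled, each remaining step is a Galois correspondence or the universal property of a normalized pullback.
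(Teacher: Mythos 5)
Your proposal follows essentially the same route as the paper: the identical chain of index inequalities \([\Arb:H^0]\leq[\Arb_0:H_0]\leq[\Arb:H]\) collapsed against \(H^0\subseteq H\) to get fractality, linear disjointness, and \(\lambda_0(L_0)=L\), followed by the same Galois-to-geometry dictionary for the pullback square and the same reversal for the converse. The only quibble is your aside equating \(L\cap K(t_0)=K(t)\) with linear disjointness (neither extension is Galois, so that implication is not automatic), but your index equality \([L_0:K(t_0)]=[L:K(t)]\) already gives linear disjointness directly, which is exactly how the paper argues it.
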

\begin{proof}
    By definition, \(H_0\subseteq H\), so the galois correspondence for fixed fields ensures \(L_0\supseteq L\). The isomorphism of fields \(\lambda_0\) is translated by galois theory into the self-similarity projection \(\pi_0:\Arb_0 \to \Arb\).

    Let \(L'\) be the fixed field of \(\pi_0(L_0)\). By self-similarity, \(\pi_0(L_0) \subseteq L\), hence \(L' \supseteq L\). That requires \([L':K(t)] \geq [L:K(t)]\), with equality if and only if \(L=L'\). By the galois correspondence, the degree \([L':K(t)]\) is \([G:\pi_0(L_0) ]\), which will furnish the reverse inequality:
    \begin{align*}
        [G:\pi_0(L_0)]
        &= [\pi_0(G_0):\pi_0(L_0)] &\textrm{pfIMGs are fractal,}\\
        &\leq [G_0:L_0] & \textrm{homomorphism decreases index,}\\
        &\leq [G:L] & \textrm{intersection decreases index,}
    \end{align*}
    Therefore, the inequalities must all be equalities and \(\lambda_0(L_0) = L\). Since \(L_0 = \Arb_0\cap H\), the galois correspondence says \(L_0 = LK(t_0) = L(t_0)\). The collapse of the inequalities into equalities also yields \([G_0:L_0] = [G:L]\), hence \([L_0:K(t_0)] = [L:K(t)]\), which is equivalent to the linear disjointness of \(L\) and \(K(t_0)\) over \(K(t)\).

    Since the subgroups \(\Arb_{(n)}\) form a neighborhood basis around the identity with trivial intersection and \(H\) is open, it must be that \(H\) contains \(\Arb_{(n)}\) for all sufficiently large \(n\); again by the galois correspondence, \(K(f^{(-n)}(t))\) contains \(L\) for such \(n\).
    
    The field structures are translated into geometry by the usual correspondence of function fields and smooth curves. That \(\lambda_0\) induces an isomorphism of the covers, taking the coordinate \(t_0\) to \(t\) ensures that it is the \emph{same} cover \(\pi_H:X_L \to \PP^1_K\) on both sides. The covers \(q_{H,n}:X_{(n)}\to Y_H\) are induced by the inclusion \(\Arb_{(n)}\subseteq H\).

    Finally, we verify the self-similarity of \(H\). Let \(L(t_0) = K(t_0,y_0)\) and \(L=K(t,y)\) where \(y_0\) and \(y\) satisfy \(\pi_H(y_0) = t\) and \(f_H(y_0) = y\), and also define \(y_v = \lambda_v\inv y_0(y_0)\). The construction ensures that \(y_v\) is a root of \(f_H(Y) = y\) which satisfies \(\pi_H(y_v) = t_v\). Therefore, the \(y_v\) are all distinct roots of \(f_H(Y) = y\) and \(\lambda_v(y_v) = y\), so that the original path data for \(f\), restricted to the pfIMG field extension for \(f_H\) and \(y\), forms a set of path data and hence gives rise to compatible labelings and tree actions for \(H\) as a subgroup of \(\Arb\).

    The converse essentially follows the same argument in reverse. The key facts are that linear disjointness means inclusions of \(L(t_0)\) into an algebraic closure over \(K(t)\) are in bijection with pairs of inclusions from \(K(t_0)\) and \(L\) into an algebraic closure over \(K(t)\), and that if \(L\subseteq K(f^{-n}(t))\) and \(L\cong L(t_0)\), then \(L(t_0) \subseteq K(f^{-n}(t_0)) \subseteq K(f^{-(n+1)}(t))\) so that the maps \(\lambda_v\) are defined on \(L(t_0)\). Writing \(L(t_0) = K(t,t_0)\) and \(L = K(t,y)\) where \(g(y_0) = y\) and \(h(y) = t\) with degrees \([L(t_0):L]\) and  \([L:K(t)]\), respectively, we see by counting that \(\lambda_v\inv (y) = y_v\) are distinct roots of \(g(Y) = y\) that satisfy \(h(y_v) = t_v\). This means that the paths chosen for \(f\) are also a collection of paths for \(g\) and that the labelings are compatible, inducing an inclusion of \(\Arb_g\) as a self-similar subgroup of \(\Arb_f\).

    Inductively, \(f^n\) is linearly disjoint from \(h\) for all \(n\), and lifts to \(g^n\), so the splitting field of \(g^n(Y) = y\) over \(L\) contains the splitting field for \(f^n(T) = t\) over \(k(t)\), and hence \([\Arb_f:\Arb_g] \leq [L:K]\). Since \(\Arb_g\) is profinite, it must be closed, and a closed subgroup of finite index is open.
\end{proof}
\begin{remark}
    A special case of Theorem~\ref{theorem: self-similar functoriality} is base change. The profinite iterated monodromy group for \(f\) over an extension \(K'\) of \(K\) is a self-similar subgroup of \(\Arb\). In that case, \(Y_H = \PP^1_{K'}\), \(f_H=f\), and \(\pi_H\) is the base change from \(\PP^1_{K'} \to \PP^1_K\).
\end{remark}
\begin{corollary}\label{corollary: reduce to galois dynamical pullback}
    In the setting of Theorem~\ref{theorem: self-similar functoriality}, where \(H\) is an open self-similar subgroup of \(\Arb\), let 
    \[N = \bigcap_{a\in\Arb} H^a\]
    be the normal core of \(H\). Then \(N\) is also open and self-similar. Therefore, there is a curve \(Y_N\) and maps \(\pi^N_H: Y_N \to Y_H\) making the following diagram commute.
    \[\begin{tikzcd}
    	{Y_N} & {Y_N} \\
    	{Y_H} & {Y_H} \\
    	{\PP^1_K} & {\PP^1_K}
    	\arrow["{f_N}", from=1-1, to=1-2]
    	\arrow["{\pi^N_H}"', from=1-1, to=2-1]
    	\arrow["{\pi_N}"', curve={height=30pt}, from=1-1, to=3-1]
    	\arrow["{\pi^N_H}", from=1-2, to=2-2]
    	\arrow["{\pi_N}", curve={height=-30pt}, from=1-2, to=3-2]
    	\arrow["{f_H}", from=2-1, to=2-2]
    	\arrow["{\pi_H}"', from=2-1, to=3-1]
    	\arrow["{\pi_H}", from=2-2, to=3-2]
    	\arrow["f"', from=3-1, to=3-2]
    \end{tikzcd}\]
    The squares and overall rectangle are all pullbacks.
\end{corollary}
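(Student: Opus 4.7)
The plan is to establish the abstract claims about $N$ first (openness and self-similarity as a subgroup of $\Arb$), then feed them into Theorem~\ref{theorem: self-similar functoriality} to produce the curve $Y_N$ and the outer pullback rectangle, and finally deduce the upper square by pullback pasting. Openness of $N$ will be immediate: since $\Arb$ is profinite and $H$ is open, $H$ has finite index, so $N$ is actually a finite intersection of conjugates of $H$, each of which is open.

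For self-similarity of $N$, I will fix a vertex $v$, take $g \in N_v$, and show that every conjugate of $\pi_v(g)$ by an element of $\Arb$ lies in $H$, which will give $\pi_v(g) \in \bigcap_a H^a = N$. The trick is to exploit fractality of $\Arb$: for any target conjugator $a \in \Arb$, lift it to some $\tilde a \in \Arb_v$ with $\pi_v(\tilde a) = a$. The element $\tilde a g \tilde a^{-1}$ then lies in $\Arb_v$ and, since $N$ is normal in $\Arb$, also in $N_v \subseteq H_v$. Self-similarity of $H$ gives $\pi_v(\tilde a g \tilde a^{-1}) \in H$, and because $\pi_v$ is a homomorphism on $\Arb_v$, this value coincides with $a \pi_v(g) a^{-1}$, completing the step.

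With $N$ open and self-similar, Theorem~\ref{theorem: self-similar functoriality} produces $Y_N$, the endomorphism $f_N$, and the cover $\pi_N: Y_N \to \PP^1_K$ so that the outer rectangle of the diagram is a pullback. The inclusion $N \subseteq H$ corresponds via the Galois correspondence to an inclusion $L_H \subseteq L_N$ of fixed fields, which supplies the cover $\pi^N_H: Y_N \to Y_H$; commutativity with the dynamics and the projections to $\PP^1_K$ is built into the constructions of Theorem~\ref{theorem: self-similar functoriality} applied to $N$ and $H$. The bottom square is a pullback by Theorem~\ref{theorem: self-similar functoriality} applied to $H$, and the outer rectangle is a pullback by the same theorem applied to $N$, so by the pasting lemma for pullbacks the top square is a pullback as well.

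The main obstacle is self-similarity of $N$. It is not a formal consequence of self-similarity of $H$ alone: it depends essentially on the fractality of $\Arb$, because the coordinate maps $\pi_v$ are only defined on stabilizers and conjugation does not interact with them cleanly unless the conjugator itself can be lifted into $\Arb_v$. Fractality of $\Arb$, a special property of pfIMGs recorded in the preliminaries, is precisely what supplies this lift.
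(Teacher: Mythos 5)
Your proof is correct and follows essentially the same route as the paper's: openness from the finiteness of the set of conjugates, self-similarity of \(N\) from fractality of \(\Arb\) combined with self-similarity of \(H\) (you argue element-by-element by lifting the conjugator into \(\Arb_v\), while the paper argues at the level of subgroups — \(N^v\) is normal in \(\Arb\) by fractality and contained in \(H\) by self-similarity, hence in the core — but these use identical ingredients), and the geometry by applying Theorem~\ref{theorem: self-similar functoriality} to both \(H\) and \(N\). The only point you pass over quickly is that the \emph{same} map \(\pi^N_H\) must serve as both vertical arrows of the top square, which the paper verifies by checking that \(\lambda_0\) carries the cover corresponding to \(N_0\subseteq H_0\) isomorphically onto that of \(N\subseteq H\); granting that, your pasting-lemma deduction that the top square is a pullback is a clean way to finish (and is in fact more explicit than the paper on that last point).
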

\begin{proof}
    Since \(H\) is open, it has finitely many conjugates, so \(N\) is also open. By construction, \(N\) is normal in \(\Arb\), so \(N_v\) is normal in \(\Arb_v\), hence \(N^v=\pi_v(N_v)\) is normal in \(\Arb = \pi_v(\Arb_v)\). From \(N\subseteq H\), we have \(N_v\subseteq H_v\), and since \(H\) is self-similar, we get \(N^v \subseteq H^v \subseteq H\). Since \(N\) is, equivalently, the largest normal subgroup of \(\Arb\) contained in \(H\), it follows that \(N^v\) is contained in \(N\), or in other words \(N\) is self-similar.
    
    Then, applying Theorem~\ref{theorem: self-similar functoriality} to \(H\) and \(N\) yields the bottom square and outer rectangle. The inclusions \(N_0\subseteq N_0\) and \(N\subseteq H\) allow the top sides to be completed with possibly-different maps, but because \(N\) and \(H\) are fractal, \(\pi_0\) takes \(N_0\subseteq H_0\) to \(N\subseteq H\); comparing degrees, or indices, it follows that \(\lambda_0\) takes the cover corresponding to \(N_0\) over \(H_0\) isomorphically to that of \(N\) over \(H\), so the covers coincide, furnishing one map \(\pi^N_H\) that completes the diagram.
\end{proof}

\subsection{Dynamical Pullbacks}
Recall that the Chebyshev and Latt\`es maps are rational functions which arise as quotients of endomorphisms of \(\GG_m\) or an elliptic curve, respectively. It is not difficult to see that these are both special cases of the kind of quotient appearing in Corollary~\ref{corollary: reduce to galois dynamical pullback}. At a first glance, the corollary produces a semiconjugacy between \(f\) and \(f_H\), but the semiconjugacy is of a special and rather ridgid form. In light of this, we define the following family of rational maps which generalizes the Chebyshev and Latt\`es constructions. Those maps are often exceptional in dynamics, and this larger family is exceptional in the results of the present paper. They can be adequately controlled by Riemann-Hurwitz and branch-cycle arguments when \(f\) is a polynomial, but their structure when \(f\) is a general rational function seems mysterious, though Riemann-Hurwitz still imposes serious restrictions on their ramification.

\begin{definition}\label{definition: dynamical pullback}
    Let \(f\) be a tame endomorphism of \(\PP^1_{\bar K}\). Let \(Y\) be an irreducible smooth curve with a cover \(\pi: Y\to \PP^1_{\bar K}\). We say that \(f\) has a \emph{dynamical pullback along \(\pi\)} if there exists an endomorphism \(f_\pi\) of \(Y\) such that the following diagram commutes and is a pullback:
    \[\begin{tikzcd}
    	Y & Y \\
    	{\PP^1_{\bar K}} & {\PP^1_{\bar K}}
    	\arrow["{f_\pi}", from=1-1, to=1-2]
    	\arrow["\pi"', from=1-1, to=2-1]
    	\arrow["\pi", from=1-2, to=2-2]
    	\arrow["f"', from=2-1, to=2-2]
    \end{tikzcd}\]
   We say that a dynamical pullback is galois if the cover \(\pi\) is galois, strict if \(\deg \pi > 1\), and has genus \(g\) when the genus of \(Y\) is \(g\). If \(f\) is a rational map for which there exist \(Y\) and \(\pi\) that, with \(f\), realize a dynamical pullback, we say that \(f\) is a quotient of a dynamical pullback (or dynamical pullback quotient). For brevity, we often omit ``dynamical'' and ``pullback'', simply referring to \(f\) as a quotient.
   
   We say that \(f\) is a strict quotient if it can be realized as the quotient of a strict dynamical pullback; then we say it is galois if there is such a galois dynamical pullback, and genus \(g\) if there is a genus \(g\) dynamical pullback.

   Finally, we say that a dynamical pullback is \emph{induced} if there is an \(n\) such that \(Y\) is a sub-cover of the galois closure of \(f^n:\PP^1_{\bar K}\to\PP^1_{\bar K}\), or equivalently, if the function field of \(Y\) as an extension of \(\bar K(t)\) given by \(\pi(y) = t\) is a subfield of \(\bar K(f^{-n}(t)\). In this situation, we say that \(f\) is a strict induced quotient.
\end{definition}
\begin{remark}
    In field-theoretic terms, pullback means that the extensions of \(\bar K(t)\) corresponding to \(f\) and \(\pi\) in \(\overline{K(t)}\) are linearly disjoint over \(\bar K\). If \(\pi\) is assumed to be galois, it suffices for the fields to intersect trivially. Equivalently, the normalization of the curve
    \[\pi(y) = f(x)\]
    is geometrically integral.
\end{remark}

\begin{remark}
    Dynamical pullbacks are semiconjugacies. However, not all semiconjugacies are dynamical pullbacks. Consider the trivial semiconjugacy \(f\circ f = f\circ f\), with associated square
    \[\begin{tikzcd}
    	{\PP^1_{\bar K}} & {\PP^1_{\bar K}} \\
    	{\PP^1_{\bar K}} & {\PP^1_{\bar K}}
    	\arrow["f", from=1-1, to=1-2]
    	\arrow["f"', from=1-1, to=2-1]
    	\arrow["f", from=1-2, to=2-2]
    	\arrow["f"', from=2-1, to=2-2]
    \end{tikzcd}\]
    This is not a pullback. From the algebraic point of view, the failure in this case is because the field extensions for \(f\) and \(h\) will coincide, hence certainly fail to be linearly disjoint. Geometrically, this is because the actual pullback of two copies of \(f:\PP^1_{\bar K}\to\PP^1_{\bar K}\) is not irreducible; it is the normalization of the fiber product, given in coordinates by \(f(x) = f(y)\), which has a component \(x=y\).
\end{remark}

The Chebyshev map is the dynamical pullback arising from \(Y = \PP^1_{\bar K}\), \(f_\pi(y) = y^d\), and \(\pi(y) = y + \frac 1 y\). In fact, it is dynamically induced immediate at \(n=1\). Latt\`es maps are also dynamical pullbacks. A slightly less common family of exceptions are the twisted Chebyshev polynomials.

\begin{definition}
    Let \(d\) be a positive integer prime to \(\cchar K\). A rational function \(f\) of degree \(d\) is said to be a \emph{standard twisted Chebyshev polynomial} if there is some \(\zeta\) such \(\zeta^{d-1} = 1\) and \(f\) satisfies the identity
    \[f\left(x + \frac 1 x\right) = x^d + \frac \zeta {x^d}.\]
    Any rational function conjugate to the standard twisted Chebyshev map is called a twisted Chebyshev. 
\end{definition}
\begin{proposition}
    A standard twisted Chebyshev map is a polynomial.
\end{proposition}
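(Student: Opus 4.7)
The plan is to argue directly from the defining identity: a rational function on $\PP^1_{\bar K}$ is a polynomial if and only if it has no finite poles, so it suffices to check that $f(u_0)\in\bar K$ is finite for every $u_0\in\bar K$. The tool for this is the surjectivity of the quotient cover $\phi:\GG_m\to\mathbb{A}^1$ given by $\phi(x) = x + 1/x$ on $\bar K$-points.

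Concretely, given $u_0\in \bar K$, I would note that the equation $x+1/x = u_0$ is equivalent to the quadratic $x^2 - u_0 x + 1 = 0$, whose constant term is $1\ne 0$, so both of its roots lie in $\bar K^\times$ and have product $1$. Picking any such nonzero root $x_0$ with $\phi(x_0)=u_0$, the defining identity forces
\[
    f(u_0) = x_0^d + \frac{\zeta}{x_0^d},
\]
which is a finite element of $\bar K$. Hence $f$ is regular at every finite point of $\mathbb{A}^1_{\bar K}$, so as a rational function of $\PP^1_{\bar K}$ its only pole lies over $\infty$, and $f$ is therefore a polynomial. (If one wanted to track the degree, the asymptotic $u = x+1/x \sim x$ and $f(u) = x^d+\zeta/x^d \sim x^d \sim u^d$ as $x\to\infty$ also gives that the pole at infinity has order $d$.)

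There is essentially no obstacle here; the entire argument reduces to the observation that the two preimages of any finite $u_0$ under $\phi$ are nonzero. Note that the hypothesis $\zeta^{d-1}=1$ and the assumption $d$ is prime to $\cchar K$ play no role in this particular proposition: only the fact that $\zeta$ is a fixed element of $\bar K$ and that the right-hand side has no singularities away from $x=0,\infty$ is used. Those hypotheses will presumably be needed elsewhere, e.g.\ to verify the identity actually defines an $f$ and to describe its critical structure.
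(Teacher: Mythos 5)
Your proof is correct, and it is essentially the mirror image of the paper's argument. The paper works over $\infty$: since $\pi(x)=x+\tfrac1x$ and $\pi_\zeta(x)=x+\tfrac\zeta x$ send exactly $\{0,\infty\}$ to $\infty$ and $g(x)=x^d$ fixes that set with total ramification, $\infty$ is a totally ramified fixed point of $f$, which characterizes polynomials. You verify the complementary statement that $f$ has no pole at any finite point, because every finite $u_0$ has a $\pi$-preimage $x_0\in\GG_m(\bar K)$ at which the right-hand side $x^d+\zeta/x^d$ is regular. The two characterizations ($f\inv(\infty)=\{\infty\}$ versus ``no finite poles'') are equivalent for a nonconstant rational map, and both proofs rest on the single fact that $\pi\inv(\infty)=\{0,\infty\}$ coincides with the polar locus of $x^d+\zeta/x^d$. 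Your version is marginally more elementary in that it never mentions ramification, and your closing remark is accurate: neither tameness nor $\zeta^{d-1}=1$ is used in either proof (the paper invokes tameness only in the cosmetic phrase ``totally tamely ramified''). The one step worth making explicit is why the pointwise evaluation $f(u_0)=x_0^d+\zeta/x_0^d$ is legitimate: viewing both sides of the identity as morphisms of $\PP^1$, a pole of $f$ at $u_0$ would force a pole of $f\circ\pi$ at $x_0$, contradicting regularity of $x^d+\zeta/x^d$ there; your phrasing in terms of poles already contains this.
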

\begin{proof}
    Let \(\pi(x) = x + \frac 1x\) and \(\pi_\zeta(x) = x + \frac \zeta x\), and \(g(x) = x^d\). The twisted Chebyshev maps are determined by the semiconjugacy
    \[f\circ \pi = \pi_\zeta \circ g.\]
    Note that \(g\) fixes \(0\) and \(\infty\), while \(\pi\) and \(\pi_\zeta\) satisfy
    \[\pi(0)=\pi(\infty)=\pi_\zeta(0) =\pi_\zeta(\infty) = \infty.\]
    Therefore, \(f\) also fixes \(\infty\). Moreover, \(g\) is totally tamely ramified over both \(0\) and \(\infty\), so \(f\) totally tamely ramified over \(\infty\). The rational functions with \(\infty\) as a totally tamely ramified fixed point are precisely the polynomials.
\end{proof}

It turns out that there are not too many more families of strict dynamical pullbacks. We can start to classify them in terms of \(Y\) and the galois group of \(\pi\). 

\begin{proposition}\label{proposition: dynamical pullback options}
    Suppose \(f\) is a dynamical pullback quotient realized by some fixed \(Y\), \(\pi\), and \(f_\pi\) as in Definition~\ref{definition: dynamical pullback}.
    
    If \(\deg f > 1\), then the genus \(g\) of this pullback is \(0\) or \(1\). When the pullback is also galois:
    \begin{enumalpha}
        \item If \(g = 0\), then \(Y\) is a projective line, and \(\Gal(\pi:Y\to\PP^1_{\bar K})\) is a finite group of symmetries of it, hence a finite subgroup of \(\PGL_2\). Therefore, the galois group is cyclic, dihedral, \(A_4\), \(S_4\), or \(A_5\). Moreover, \(\pi\) is ramified over at most \(3\) places and the triple of ramification indices over the branch points determines the group up to isomorphism.
        \item If \(g = 1\), then \(Y\) is an elliptic curve. For the same reasons as in the genus zero case, \(\pi\) is a finite group of symmetries of this elliptic curve, and \(f\) is Latt\`es.
    \end{enumalpha}
\end{proposition}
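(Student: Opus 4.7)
The approach is to exploit Riemann--Hurwitz twice: once for the self-map $f_\pi$ to bound the genus of $Y$, and once for the cover $\pi$, after using the galois hypothesis to extract uniform ramification data.

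First I would bound the genus. Let $d = \deg f = \deg f_\pi > 1$ and $g = g(Y)$. Riemann--Hurwitz applied to $f_\pi : Y \to Y$ gives
\[2g - 2 = d(2g - 2) + R,\]
where $R \geq 0$ is the degree of the ramification divisor. Rearranging, $(d-1)(2g-2) = -R \leq 0$, and since $d > 1$ this forces $g \leq 1$.

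Now assume $\pi$ is galois with group $G$. In case $g = 0$, $Y \cong \PP^1_{\bar K}$, and $G$ embeds faithfully into $\Aut(Y) = \PGL_2(\bar K)$ as a finite subgroup; under the tameness assumption, so that $|G|$ is coprime to $\cchar K$, the classical Klein/Dickson classification leaves exactly the five possibilities: cyclic, dihedral, $A_4$, $S_4$, and $A_5$. To pin down the ramification, set $n = |G|$ and apply Riemann--Hurwitz to $\pi : \PP^1_{\bar K} \to \PP^1_{\bar K}$. Because $\pi$ is galois, the points above the $i$-th branch point share a common index $e_i$, with $n/e_i$ preimages each, so
\[2 - \frac{2}{n} = \sum_{i=1}^r \left(1 - \frac{1}{e_i}\right).\]
Each summand is at least $\tfrac{1}{2}$ and the total is strictly less than $2$, which forces $r \leq 3$. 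A short arithmetic case analysis then yields the tuple $(e_1, \dots, e_r)$: $r \leq 1$ is trivial; $r = 2$ forces $(e_1, e_2) = (n, n)$, giving the cyclic group; $r = 3$ forces $(2,2,n)$, $(2,3,3)$, $(2,3,4)$, or $(2,3,5)$, corresponding respectively to dihedral, $A_4$, $S_4$, $A_5$. In each case the triple of indices determines the group up to isomorphism.

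In case $g = 1$, Riemann--Hurwitz forces $R = 0$, so $f_\pi$ is étale of degree $d > 1$. Choosing any $\bar K$-point of $Y$ as origin makes $Y$ into an elliptic curve, and an unramified degree-$d$ endomorphism of an elliptic curve is, by rigidity, an isogeny composed with a translation. The commutative square then exhibits $f$ as the quotient of an elliptic-curve endomorphism by $G \leq \Aut(Y)$, which is the definition of a Latt\`es map. The main obstacle I anticipate is in the $g = 1$ case: one must verify carefully that an étale self-map of positive degree on a genus-$1$ curve really does promote $Y$ to an elliptic curve with $f_\pi$ an isogeny-plus-translation (rather than, say, a map to an isogenous but distinct curve). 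The routine case analysis in the $g = 0$ galois step is bookkeeping, but its output is what makes the rigidity of the classification visible.
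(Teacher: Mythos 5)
Your proof is correct and follows the same route as the paper: bound the genus of \(Y\) using the degree of the self-map \(f_\pi\), invoke Klein's classification of finite subgroups of \(\PGL_2\) in genus zero, and identify the genus-one case as Latt\`es. The only difference is one of economy: the paper cites the fact that curves of genus \(\geq 2\) admit no endomorphisms of degree \(>1\) and cites Klein's classification (including the ramification data) wholesale, whereas you derive both from Riemann--Hurwitz; your extra work in the genus-one case (rigidity promoting the \'etale self-map to isogeny-plus-translation) is also correct and is implicitly what the paper means by ``so \(f\) is Latt\`es.'' One trivial bookkeeping slip: for the dihedral group of order \(n\) the branch triple is \((2,2,n/2)\), not \((2,2,n)\); this does not affect the classification.
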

\begin{proof}
    Since \(f\circ \pi = \pi \circ f_\pi\), we have \(\deg f_\pi = \deg f > 1\). A curve of genus \(2\) or greater admits no endomorphisms of degree strictly greater than \(1\), leaving only genus \(0\) and \(1\) as possibilities for the genus of \(Y\).

    If \(g_Y = 0\) then \(Y\) is a projective line, so \(\Gal(Y/\PP^1_{\bar K})\) is a finite subgroup of the symmetries of the projective line. The conclusion  summarizes Klein's classification; see \cite[Theorem I.6.2]{MM:inversegalois} for an outline.

    If \(g_Y = 1\) then \(Y\) is (over \(\bar K\)) an elliptic curve, and so \(f\) is Latt\`es.
\end{proof}

The previous section showed that open self-similar subgroups of \(\bArb\) give rise to induced quotients, so the present geometric considerations place limits on when it is possible for such a subgroup to exist. In general, Riemann-Hurwitz places substantial limits on dynamical pullback quotients. When \(f\) is a polynomial, only twisted Chebyshev quotients can appear.
\begin{theorem}\label{theorem: classification of open self similar pfIMG quotients}
    Let \(\bArb\) be the geometric profinite iterated monodromy group of \(f\). Suppose \(H\) is an open self-similar subgroup of \(\Arb\) and let \(N\) be the normal core of \(H\). Let \(L\) be the fixed field of \(N\) and \(Y_N\) the corresponding curve with cover \(\pi_N:Y_N\to \PP^1_{\bar K}\). Then \(f\) is an induced pullback quotient, where \(Y=Y_N\) and \(\pi = \pi_N\).

    In particular, \(\bArb/N\) is either a finite subgroup of automorphisms of an elliptic curve and \(f\) is Latt\`es, or \(\bArb/N\) is one of the finite subgroups of \(\PGL_2\): cyclic, dihedral, \(A_4\), \(S_4\), or \(A_5\). In the latter case, \(\pi_N\) is ramified over two or three points, is determined by those ramification indices, and all three branch points must also be pre-periodic and branch points for \(f\).

    If \(f\) is a polynomial and \(H\neq \bArb\) then \([\bArb:N] = 2\), \(H=N\), \(f\) is a twisted Chebyshev map, \(\bArb\) is the infinite pro-\(d\) dihedral group, and \(H\) is the rotation subgroup, generated by the odometer.
\end{theorem}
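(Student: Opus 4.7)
The plan is to bundle Corollary~\ref{corollary: reduce to galois dynamical pullback} and Proposition~\ref{proposition: dynamical pullback options} for the bulk of the theorem, then supplement with a branch-point chase and a Riemann--Hurwitz case analysis for polynomials. First, applying Corollary~\ref{corollary: reduce to galois dynamical pullback} to \(H\) gives that its normal core \(N\) is open and self-similar and fits into a Galois pullback square with \(\pi_N:Y_N\to\PP^1_{\bar K}\). Because \(N\) is open, Theorem~\ref{theorem: self-similar functoriality} forces \(L\subseteq\bar K(f^{-n}(t))\) for all sufficiently large \(n\), so \(Y_N\) is a subcover of the Galois closure of \(f^n\) and the pullback is induced; this establishes the first claim with \(Y=Y_N\) and \(\pi=\pi_N\). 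Proposition~\ref{proposition: dynamical pullback options} applied to this Galois induced pullback then gives the dichotomy: either \(Y_N\) is an elliptic curve and \(f\) is Latt\`es, or \(\bArb/N\) is one of the Klein finite subgroups of \(\PGL_2\), with the 2-or-3 branch point count and ramification indices determining \(\pi_N\) up to isomorphism.

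For the branch-point claim: since the diagram is a pullback, \(\pi_N\) coincides with its own \(f\)-pullback as a cover of \(\PP^1_{\bar K}\). A ramification chase through the fibered square forces \(B_{\pi_N}\subseteq f^{-1}(B_{\pi_N})\), i.e.\ \(f(B_{\pi_N})\subseteq B_{\pi_N}\); the finite set \(B_{\pi_N}\) is forward \(f\)-invariant and therefore consists of pre-periodic points. The induced condition gives \(B_{\pi_N}\subseteq B_{f^n}=\bigcup_{k<n}f^k(B_f)\), so each branch point of \(\pi_N\) is a forward \(f\)-iterate of a branch point of \(f\).

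For the polynomial case with \(H\neq\bArb\): since \(f\) is a polynomial, \(\infty\) is a totally tamely ramified fixed point with index \(d\), and the odometer \(\omega\) topologically generates its inertia. Riemann--Hurwitz applied to \(f_N:Y_N\to Y_N\) shows the ramification contribution from \(\pi_N^{-1}(\infty)\) is \((|G|/e_\infty)\cdot(d-1)\), where \(e_\infty\) is the ramification of \(\pi_N\) at a preimage of \(\infty\) (taken as \(1\) when \(\infty\notin B_{\pi_N}\)); so \(|G|\le 2e_\infty\). This immediately rules out \(A_4\), \(S_4\), \(A_5\), and restricts us to cyclic or dihedral \(G\) with \(\infty\) at a maximally ramified branch. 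The residual cyclic-with-\(|G|\ge 3\) and dihedral-with-\(n\ge 2\) cases are killed by the self-similarity of \(N\) itself: applying \(\pi_v\) to iterates of the odometer (e.g.\ the wreath identity \(\omega^d=(\omega,\ldots,\omega)\) so that \(\pi_0(\omega^d)=\omega\)) shows the candidate \(N\) fails \(\pi_0(N_{(1)})\subseteq N\), contradicting self-similarity. What survives is \(G\cong\mathbb Z/2\), whence \(\pi_N\) is a conjugate of the degree-two Chebyshev cover and the semi-conjugacy identifies \(f\) as twisted Chebyshev. Then \([\bArb:N]=2\) with \(N\subseteq H\subsetneq\bArb\) forces \(H=N\); since \(\infty\notin B_{\pi_N}\) in this profile, \(\omega\in N\) and index comparison gives \(\overline{\langle\omega\rangle}=N\); the exact sequence \(1\to N\to\bArb\to\mathbb Z/2\to 1\) exhibits \(\bArb\) as the infinite pro-\(d\) dihedral group. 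The main obstacle is this last case analysis: Riemann--Hurwitz alone does not settle the residual cyclic and dihedral cases, and the wreath-recursion structure of self-similarity must be carefully invoked to eliminate them.
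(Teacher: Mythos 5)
Your overall architecture matches the paper's: Corollary~\ref{corollary: reduce to galois dynamical pullback} plus Proposition~\ref{proposition: dynamical pullback options} for the classification, then the interaction of the odometer with self-similarity combined with Riemann--Hurwitz for the polynomial case. Your ramification chase for the forward-invariance and pre-periodicity of \(B_{\pi_N}\) is a welcome addition (the paper's proof is silent on that clause). In the polynomial case you run the two ingredients in the opposite order from the paper --- Riemann--Hurwitz first with \(e_\infty\) unknown, then self-similarity --- which is workable, but the cleaner statement is that self-similarity forces \(\odometer\in N\) outright: \(N\supseteq\bArb_{(n)}\ni\odometer^{d^n}\) for some \(n\), and \(\pi_v(\odometer^{d^n})=\odometer\) for \(v\) at level \(n\), so \(\odometer\in N^v\subseteq N\). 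This gives \(e_\infty=1\) in \emph{every} case, so there is no need to treat ``cyclic with \(|G|\ge 3\)'' and ``dihedral with \(n\ge 2\)'' as residual cases; it also disposes of \(G\cong\ZZ/2\) with \(\infty\) ramified (\(e_\infty=2\), still satisfying \(|G|\le 2e_\infty\)), which your case list skips. As written, your sentence ``since \(\infty\notin B_{\pi_N}\) in this profile, \(\odometer\in N\)'' has the implication backwards: it is \(\odometer\in N\) (from self-similarity) that yields \(\infty\notin B_{\pi_N}\).

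Two genuine gaps remain at the end. First, the extension \(1\to N\to\bArb\to\ZZ/2\to 1\) with \(N\cong\ZZ_d\) does not by itself ``exhibit \(\bArb\) as the infinite pro-\(d\) dihedral group'': when \(d\) is not a prime power, \(\ZZ_d^*\) contains involutions other than \(-1\), and the induced action could a priori be trivial. The paper rules out the trivial action because the odometer is self-centralizing, and pins the action down to inversion by a Riemann-existence/branch-cycle argument (inertia generators over the two points of \(\pi_N\inv(\infty)\) multiply to the identity and are swapped by the deck transformation). Second, ``the semi-conjugacy identifies \(f\) as twisted Chebyshev'' elides the normalization that makes \(f\) \emph{twisted} Chebyshev in the paper's sense: one must write \(\pi_N(y)=(ay^2+by+c)/y\), compose with a linear map to reach \(y+\zeta/y\), and compare coefficients in \(f(x+\zeta/x)=x^{\pm d}+\zeta/x^{\pm d}\) to obtain \(\zeta^d=\zeta\). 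Relatedly, ``index comparison gives \(\overline{\langle\odometer\rangle}=N\)'' is not justified as stated, since the index of \(\overline{\langle\odometer\rangle}\) is not known in advance; the correct route is that \(N\) is the pfIMG of the powering map \(y^{\pm d}\), hence procyclic and abelian, and the self-centralizing odometer lies in it.
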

\begin{proof}
    If \(H\) is self-similar, then Corollary~\ref{corollary: reduce to galois dynamical pullback} implies that its normal core gives rise to an induced galois quotient. The possibilities for \(\bArb/N\), which are classified by Proposition~\ref{proposition: dynamical pullback options}.

    This leaves the polynomial case. First, recall that there is some integer \(n\) such that \(N\)  contains \(\bArb_{(n)}\) and that \(\odometer^{d^n}\) is in the latter subgroup. From the wreath recursion, it is easy to see that \(\pi_v(\odometer^{d^n}) = \odometer\) for any vertex \(v\) at level \(n\), and so the odometer is in \(N^v\). By self-similarity, the odometer must be in \(N\). By normality, all \(\bArb\)-conjugates of the odometer are in \(N\) as well, and so \(\pi_N\) is unramified over \(\infty\).
    
    This means \(\pi_N\inv (\infty)\) has \(\deg \pi_N\) distinct elements, and \(f_N\) is ramified of degree \(d\) over each member of \(\pi\inv(\infty)\). Since \(\deg f_N = \deg f = d\), we see that \(f_N\) is totally ramified over at least two points. Riemann-Hurwitz then tells us
    \begin{align*}
        2d - 2 
          &= \sum_{P\in X}\sum_{f_N(Q) = P} e_{Q/P} - 1,\\
          &\geq \sum_{P\in \pi\inv(\infty)}\sum_{f_N(Q) = P} e_{Q/P} - 1,\\
          &\geq (\deg \pi_N) (d-1).
    \end{align*}
    which is only satisfiable when \(\deg \pi_N = 2\). The degree of \(\pi_N\) is the index of \(N\) in \(\bArb\). Because \(H\) contains \(N\), this means either \(H=N\) or \(H=\bArb\). The normal core of \(\bArb\) is \(\bArb\), so the former is not possible hence \(H=N\).
    
    \vspace{1em}
    
    Using the ramification information, we can show that \(\bArb\) is the infinite pro-\(d\) dihedral group.
    
    From the Riemann-Hurwitz calculation, the genus of \(Y_N\) is zero, so it is a copy of \(\PP^1_{\bar K}\). Changing coordinates on \(Y_N\) only changes \(f_N\) by conjugation with a Mobi\"us transformation, and \(\pi_N\) by pre-composition with the same transformation. Making such a change if necessary, we may assume \(\pi_N\inv(\infty) = \{0,\infty\}\). Additionally, \(\infty\) is fixed by \(f\), so \(f_N\) either fixes \(0\) and \(\infty\) or swaps them. If they are fixed, then \(f_N(y) = y^d\), and if it swaps them then \(f_N(y) = y^{-d}\). In both cases, \(N\) is procyclic, isomorphic to \(\ZZ_d\), hence abelian, and contains the odometer. The odometer is self-centralizing, so the only abelian subgroup containing it is the subgroup it generates, so \(N = \llangle \odometer \rrangle\). As \(N\) is normal of index \(2\), this gives rise to an exact sequence,
        \[0\to \ZZ_d \to \bArb \to \ZZ/2\ZZ \to 0.\]
    The induced outer \(\ZZ/2\ZZ\) action on \(\ZZ_d\) induces an actual action because the latter is abelian, and because odometer is self-centralizing in \(\bArb\), the action cannot be trivial.
    
    The Riemann Existence Theorem lets us select two elements \(\odometer_0\) and \(\odometer_\infty\), one from an inertia subgroup of \(N\) over \(0\) and \(\infty\), respectively, which generate \(\ZZ_d\) satisfy the product formula \(\odometer_0\odometer_\infty = \id\), hence \(\odometer_\infty = \odometer_0\inv\). Since the monodromy group of \(\pi_N\) exchanges \(0\) and \(\infty\), it must also swap \(\odometer_0\) and \(\odometer_\infty\) up to conjugacy in \(\ZZ_d\); as remarked above, this outer action is an actual action, so \(\ZZ/2\ZZ\) acts as on \(\ZZ_d\) by inversion, and so \(\bArb\) is the pro-\(d\) dihedral group.
    
    \vspace{1em}
    
    Finally, we show that \(f\) is twisted Chebyshev. We have established that \(f_N(y) = y^{\pm d}\) for an appropriate choice of coordinates, and that \(\pi_N\inv(\infty) = \{0,\infty\}\). The latter requires that \(\pi_N\) is of the form    
    \[\pi_N(y) = \frac{ay^2 + by + c}y\]
    for some \(a,b,c\), where both \(a\) and \(c\) are nonzero. Let \(\zeta = c/a\), which is nonzero and finite. Composing with the linear polynomial \(\ell(y) = (y-b)/a\), we obtain
        \[\ell\circ \pi_N (y) = y + \frac {\zeta} y.\]
    Therefore, \(\tilde f = l\inv \circ f\circ \ell\) satisfies
        \[\tilde f \left(x + \frac{\zeta}{x}\right) = x^{\pm d} + \frac{\zeta}{x^{\pm d}}.\]
    Comparing the coefficients of \(x^{\pm d}\), we see that \(\zeta^d = \zeta\). As pointed out above, \(\zeta\) is nonzero, and therefore the conjugate \(\tilde f\) of \(f\) is the standard \(\zeta\)-twisted Chebyshev polynomial of degree \(d\), hence \(f\) is twisted Chebyshev.
\end{proof}

\section{Self-Similar Properties}\label{section: self similar properties}

In this section, we define/construct the self-similar closure and the related notion of self-similar properties of subgroups. Then, we show that many interesting group-theoretic properties are self-similar, especially when the ambient group is fractal. Finally, we combine these with the results of the previous section to show that self-similar properties will expand to the entire group when the associated rational function is not a proper quotient of a dynamical pullback.

\begin{definition}\label{definition: self-similar closure}
    Let \(G\) be a profinite self-similar group and let \(H\) be a closed subgroup of \(G\). The \emph{self-similar closure} of \(H\) is the smallest closed self-similar subgroup \(J\) of \(G\) that contains \(H\), where smallest means that any other closed self-similar subgroup containing \(H\) also contains \(J'\).
\end{definition}
\begin{proposition}
    The self-similar closure exists. It can be constructed by transfinite recursion, and if \(H\) is open, this recursion is a finite process.
\end{proposition}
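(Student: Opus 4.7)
The plan is to give two complementary constructions of the self-similar closure and then handle the finiteness claim. First, I would observe that the collection \(\mathcal{J}\) of closed self-similar subgroups of \(G\) containing \(H\) is nonempty, as \(G\) itself lies in it. The key abstract fact is that \(\mathcal{J}\) is closed under arbitrary intersection: for a family \(\{J_i\}_{i \in I}\) in \(\mathcal{J}\), the intersection \(J = \bigcap_i J_i\) is closed, and for any vertex \(v\) and any \(g \in J_v\), the element \(g\) lies in each \((J_i)_v\), so \(\pi_v(g)\) lies in each \(J_i\) by self-similarity of the \(J_i\) and therefore in \(J\). Consequently \(\bigcap_{J' \in \mathcal{J}} J'\) exists and is the desired self-similar closure.

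For the explicit transfinite construction, I would set \(H_0 = H\) and, at successor stages, define \(H_{\alpha+1}\) to be the closed subgroup of \(G\) topologically generated by \(H_\alpha\) together with all images \(\pi_v((H_\alpha)_v)\) as \(v\) ranges over vertices of \(T\); by the remark following the definition of self-similarity, it suffices to restrict to \(v\) at level one. At limit ordinals, I would take \(H_\lambda = \overline{\bigcup_{\alpha < \lambda} H_\alpha}\). The resulting non-decreasing chain of closed subgroups of \(G\) must stabilize, since the closed subgroups of \(G\) form a set, yielding some \(H_\beta = H_{\beta+1}\); the successor step then forces \(\pi_v((H_\beta)_v) \subseteq H_\beta\) for every \(v\), so \(H_\beta\) is self-similar, and a transfinite induction shows \(H_\beta\) is contained in every member of \(\mathcal{J}\), so it coincides with the intersection from the first construction.

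For the openness claim, the argument will be straightforward: since \(H_\alpha \supseteq H\) and \(H\) is open, every \(H_\alpha\) is open with \([G : H_\alpha] \leq [G : H]\) a positive integer. This sequence of indices is non-increasing in \(\alpha\), so it (and hence the chain of subgroups) must stabilize at a finite stage, bounded above by \([G : H]\). The main obstacle I anticipate is essentially cosmetic: stabilization in the general transfinite case relies on a set-theoretic cardinality bound with no concrete termination estimate, whereas for open \(H\) the pigeonhole argument applied to the finite lattice of open subgroups containing \(H\) makes this routine.
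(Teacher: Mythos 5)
Your proof is correct and follows essentially the same route as the paper: existence via closure of the family of closed self-similar overgroups under arbitrary intersection, an explicit transfinite recursion adjoining the images \(\pi_v((H_\alpha)_v)\) with closures of unions at limits, and termination for open \(H\) from the boundedness of the indices \([G:H_\alpha]\leq[G:H]\). The only cosmetic difference is that you adjoin all vertices' contributions at each successor step (reducing to level one), whereas the paper processes one vertex per step along a list in which every vertex recurs cofinally; both variants are sound.
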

\begin{proof}
    It suffices to show that arbitrary intersections of closed self-similar subgroups are closed and self-similar. Suppose that \(A(i)\) is a collection of closed self-similar subgroups indexed by \(i\in \mcI\). Let \(A(\infty)\) be the intersection of all the \(A(i)\), which is certainly a closed subgroup. As for self-similarity, we calculate
    \begin{align*}
        A(\infty)^v
          &= \pi_v(A(\infty)_v)\\
          &= \pi_v \left(\bigcap_{i\in \mcI} A(i)_v\right)\\
          &\subseteq \bigcap_{i\in \mcI} \pi_v (A(i)_v)\\
          &= \bigcap_{i\in \mcI} A(i)^v\\
          &\subseteq \bigcap_{i\in \mcI} A(i) \hspace{4em} (A(i)\textrm{ are self-similar})\\
          &= A(\infty).
    \end{align*}
    Therefore, \(A(\infty)^v\subseteq A(\infty)\), so it is self-similar.
    \vspace{1em}

    The explicit construction is as follows. Form an infinite totally ordered list of vertices, each repeated so that every vertex appears arbitrarily high in the list. Using \(\mcI\) to index this list, we form a sequence of subgroups \(H(i)\), where at step \(i+1\), we define \(H(i+1) = H(i) H(i)^{v_i}\). At limit ordinals, we set \(H(i) = \overline{\bigcup_{j < i} H(j)}\).

    The construction produces an ascending chain of subgroups. The repetition of vertices in the list and ensures that for any index \(i\) and vertex \(v\), there is some \(j=j_i\geq i\) such that \(v_j = v\), and hence
    \[H(i)^v \subseteq H(j)^{v_j} \subseteq H(j+1).\]
    Now let \(H(\infty) = \overline{\bigcup H(i)}\) and take a vertex \(v\). Note that \(H(\infty)_v = H(\infty)\cap G_v\) is an intersection of closed sets, the ambient space is Hausdorff and compact, and \(\pi_v\) is continuous, so the fact above yields
    \[H(\infty)^v = \overline{\bigcup_{i\in \mcI} H(i)^v} \subseteq \overline{\bigcup_{j=j_i, i\in\mcI} H(j)^v} = H(\infty).\]
    The final equality follows from \(j\geq i\) and that the sequence is nested. Therefore, \(H(\infty)\) is self-similar. It is clear that each intermediate \(H(i)\) must be contained in any closed self-similar group containing \(H\), and so \(H(\infty)\) is the self-similar closure.

    If \(H\) is open, then it is contained in only finitely many subgroups of \(G\), so the chain \(H(i)\) has only finitely many distinct entries, and the recursion can be refined to a finite one.
\end{proof}

\begin{definition}\label{definition: self-similar property}
    Let \(G\) be a self-similar group. A property \(\mcP\) of closed subgroups of \(G\) is said to be \emph{self-similar} if, whenever \(H\) satisfies \(\mcP\), so too does the self-similar closure of \(H\). For brevity, we say that \(H\) is an \emph{\(\mcP\)-subgroup} when \(H\) satisfies \(\mcP\).
    
    A property of \(\mcP\) of subgroups can be identified with the set of \(\mcP\)-subgroups, and \(\mcP\) is a self-similar property when the set of subgroups of \(G\) satisfying \(\mcP\) is closed under self-similar closure in \(G\), in which case we say that \(\mcP\) or this set is self-similarity-closed.
\end{definition}

The explicit construction of the self-similar closure gives rise to a natural sufficient condition for a property to be self-similar:

\begin{proposition}\label{proposition: construction criteria for self similarity}
    Let \(G\) be a self-similar group and \(\mcP\) a property of its subgroups. Then \(\mcP\) is self-similar if the collection of \(\mcP\) subgroups is closed under the following two operations:
    \begin{enumerate}
        \item Given an \(\mcP\)-subgroup \(H\) and vertex \(v\), the compositum \(HH^v\) is an \(\mcP\)-subgroup.
        \item If \((H_i)_{i\in\mcI}\) is an ascending chain of \(\mcP\)-subgroups, then \(\overline{\bigcup H_i}\) is also an \(\mcP\)-subgroup. 
    \end{enumerate}
    If \(\mcP\) implies open, then the second condition is unnecessary.
\end{proposition}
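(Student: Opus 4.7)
The plan is to apply the explicit transfinite construction of the self-similar closure from the preceding proposition and check that property \(\mcP\) is preserved along every step of that construction. Let \(H\) be a closed subgroup satisfying \(\mcP\); the construction produces a chain \((H(i))_{i\in\mcI}\) of closed subgroups with \(H(0)=H\), successor step \(H(i+1)=H(i)H(i)^{v_i}\), limit step \(H(i)=\overline{\bigcup_{j<i}H(j)}\), and terminal subgroup \(H(\infty)=\overline{\bigcup_{i\in\mcI}H(i)}\), which is the self-similar closure.

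I would then establish by transfinite induction on \(i\) that every \(H(i)\) is a \(\mcP\)-subgroup. The base case is the hypothesis on \(H\); the successor case is exactly condition (1) applied to \(H(i)\) and the vertex \(v_i\); and the limit case is exactly condition (2) applied to the ascending chain \((H(j))_{j<i}\), which is a chain of \(\mcP\)-subgroups by the inductive hypothesis. Applying condition (2) one more time to the entire chain \((H(i))_{i\in\mcI}\) yields that \(H(\infty)\) itself is a \(\mcP\)-subgroup, completing the argument. Since the construction in the preceding proposition was designed so that every step is exactly one of the two operations appearing in the hypotheses, the proof reduces to bookkeeping and there is no genuine obstacle.

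For the final assertion, if \(\mcP\) implies openness then every \(H(i)\) is open, and an open subgroup of the profinite group \(G\) has finite index, so only finitely many subgroups lie between \(H\) and \(G\). The ascending chain \((H(i))\) therefore stabilizes after finitely many successor steps; no limit ordinal is ever reached, so condition (2) is never invoked, and condition (1) alone suffices to propagate \(\mcP\) to the self-similar closure.
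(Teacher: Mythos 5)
Your proof is correct and is essentially the paper's own argument: the paper likewise notes that conditions (1) and (2) handle, respectively, the successor and limit steps of the transfinite construction of the self-similar closure, and that openness forces the chain to stabilize after finitely many successor steps so that condition (2) is never needed. You have simply written out the transfinite induction in more detail than the paper does.
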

\begin{proof}
    Immediate from the recursive construction of the self-similar closure: conditions (1) and (2) imply that each step of the recursion produces an \(\mcP\)-subgroup, so by transfinite induction the self-similar closure is also an \(\mcP\)-subgroup. If \(\mcP\) implies open, then every ascending chain is equivalent to a finite chain, because open subgroups are properly contained in only finitely many subgroups.
\end{proof}

\begin{proposition}\label{proposition: odometer is ssc}
    Let \(G\) be a self-similar group which contains the standard odometer \(\odometer\) and the subgroup \(\Odometer\) that it generates. The set of subgroups of \(\Odometer\) is self-similarity-closed.
\end{proposition}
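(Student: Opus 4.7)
The plan is to verify the hypotheses of Proposition~\ref{proposition: construction criteria for self similarity} for the property \(\mcP\) = ``is a closed subgroup of \(\Odometer\)''. This reduces the whole problem to two bite-sized claims: that the compositum \(HH^v\) of any \(\mcP\)-subgroup \(H\) with its push-forward \(H^v\) stays in \(\Odometer\), and that ascending unions of \(\mcP\)-subgroups (closed up) stay in \(\Odometer\). The latter is trivial because \(\Odometer\) is closed by assumption, so I would dispatch it in one line.

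For the compositum condition, the essential content is that \(\Odometer\) is itself self-similar, which I would prove directly from the wreath recursion \(\odometer=(1,\ldots,1,\odometer)\sigma\). Iterating this recursion shows that \(\odometer^{d^n}=(\odometer^{d^{n-1}},\ldots,\odometer^{d^{n-1}})\cdot\id\), with trivial permutation at level one, and hence inductively \(\odometer^{d^n}\) fixes every vertex at level \(n\) and its self-similarity coordinate at any such vertex \(v\) is \(\odometer\). Equivalently, viewing \(\odometer^a\) as ``add \(a\)'' on \(d\)-adic integers, one sees at once that \(\odometer^a\) fixes a (any, all) level-\(n\) vertex iff \(d^n\mid a\). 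Thus \(\Odometer_v=\Odometer_{(n)}=\overline{\langle\odometer^{d^n}\rangle}\) for any \(v\) at level \(n\), and since \(\pi_v\) is a continuous group homomorphism,
\[
\pi_v(\Odometer_v)=\pi_v\!\left(\overline{\langle\odometer^{d^n}\rangle}\right)=\overline{\langle\pi_v(\odometer^{d^n})\rangle}=\overline{\langle\odometer\rangle}=\Odometer.
\]
So in fact \(\Odometer^v=\Odometer\) for every vertex \(v\), i.e.\ \(\Odometer\) is fractal.

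Given this, condition (1) of Proposition~\ref{proposition: construction criteria for self similarity} is immediate: if \(H\le\Odometer\) is closed, then \(H_v=H\cap G_v\subseteq\Odometer\cap G_v=\Odometer_v\), so \(H^v=\pi_v(H_v)\subseteq\pi_v(\Odometer_v)=\Odometer\), and therefore \(HH^v\subseteq\Odometer\cdot\Odometer=\Odometer\) since \(\Odometer\) is a subgroup.

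The only mildly subtle point in the whole argument is the claim that \(\pi_v\) sends closures to closures; I would note in passing that this holds automatically because \(\pi_v\) is a continuous map out of a compact Hausdorff space, so images of closed sets are closed. Everything else is a direct unwinding of the wreath recursion. I do not expect a real obstacle here; the main effort is just making the identification \(\Odometer_v=\overline{\langle\odometer^{d^n}\rangle}\) and its image under \(\pi_v\) transparent, which the ``add \(a\) on \(\ZZ_d\)'' description makes almost automatic.
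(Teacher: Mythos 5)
Your proof is correct and follows essentially the same route as the paper: both arguments unwind the wreath recursion for powers of the odometer to see that the self-similarity coordinates of elements of \(\Odometer\) land back in \(\Odometer\), and then feed this into Proposition~\ref{proposition: construction criteria for self similarity}. The only cosmetic difference is that the paper computes the coordinates of an arbitrary power \(\odometer^{k+du}\) directly, whereas you identify \(\Odometer_v=\overline{\langle\odometer^{d^n}\rangle}\) and use continuity of \(\pi_v\), which gives the slightly stronger (but not needed) conclusion that \(\Odometer\) is fractal.
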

\begin{proof}
    Let \(m = k+du\) be a \(d\)-adic integer, where \(k=m\bmod d\) is an integer between \(0\) and \(d-1\). Then we can write
    \[\odometer^{k+du} = (\odometer^u,...,\odometer^u,\odometer^{u+1},...,\odometer^{u+1})\sigma^k,\]
    from which it follows by induction that the only coordinates of \(\odometer^m\) are again powers of the odometer. Therefore, if \(H\) is a subgroup of \(\Odometer\), so too is \(H^v\), and hence \(HH^v\) is also contained in \(\Odometer\). Since \(\Odometer\) is itself a closed subgroup, ascending unions and topological closures of its subgroups are still contained in \(\Omega\).
\end{proof}

Self-similar properties can and do make reference to how \(H\) is situated within \(G\) (open, closed, normal, ...). In light of the explicit construction, it is not surprising that these properties seem easier to understand when \(G\) is fractal: information about the inclusion \(H\subseteq G\) can pass to \(H_v = H\cap H_v\subseteq G_v\), be transferred by \(\pi_v\) to \(H^v\subseteq G^v = G\), and then finally to the compositum \(HH^v\); in good cases, this will imply \(HH^v\) satisfies \(\mcP\). 

\begin{example}
    Suppose \(G\) is fractal. Then the property ``\(H\) is normal in \(G\)'' is self-similar: \(H \normal G\) implies \(H_v \normal G_v\), hence \(H^v = \pi_v(H_v) \normal \pi_v(G_v) = G\), so \(HH^v\) is also normal in \(G\). Ascending unions and topological closures of normal subgroups are normal as well.
\end{example}

With this in mind, the following proposition allows us to translate some standard group-theoretic properties, like prosolvability or pronilpotence, into equivalent self-similar properties when the ambient group is fractal.

\begin{proposition}\label{proposition: formation implies ssc}
    Let \((G,H)\) denote a pair consisting of a profinite group and subgroup. Suppose a property \(\mcQ\) of such pairs satisfies the following conditions:
    \begin{enumalpha}
        \item If \(\mcQ(G,H)\) and \(K\) is a subgroup of \(G\), then \(\mcQ(K,H\cap K)\).
        \item If \(\mcQ(G,H)\) and \(\pi\) is a homomorphism, \(\mcQ(\pi(G),\pi(H))\).
        \item If \(\mcQ(G,H)\) and \(\mcQ(G,K)\), then \(\mcQ(G,HK).\)
        \item If \((H_i)_{i\in\mcI}\) is an ascending chain of subgroups such that  \(\mcQ(G,H_i)\) for all \(i\in\mcI\), then \(\mcQ\left(G,\overline{\bigcup H_i}\right)\).
    \end{enumalpha}
    If \(G\) is fractal, then the property \(\mcP(H) = \mcQ(G,H)\) is self-similar.

    Also, if \(\mcQ(G,H)\) implies \(H\) is open, then (d) is unnecessary.
\end{proposition}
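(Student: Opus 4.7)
The plan is to verify the two hypotheses of Proposition~\ref{proposition: construction criteria for self similarity} for the property \(\mcP(H) = \mcQ(G,H)\), using the four closure conditions on \(\mcQ\) together with fractality of \(G\). The main step is establishing closure under the operation \(H \mapsto HH^v\); the ascending-chain condition transfers directly.

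First I would handle the compositum condition. Suppose \(\mcP(H)\), i.e.\ \(\mcQ(G,H)\), and let \(v\) be any vertex. Applying condition (a) with the subgroup \(K = G_v\) gives \(\mcQ(G_v, H\cap G_v) = \mcQ(G_v, H_v)\). Next, apply (b) with the self-similarity homomorphism \(\pi_v: G_v \to G\); because \(G\) is fractal, \(\pi_v(G_v) = G\), so we obtain
\[\mcQ\bigl(G, \pi_v(H_v)\bigr) = \mcQ(G, H^v).\]
Having established both \(\mcQ(G,H)\) and \(\mcQ(G,H^v)\), condition (c) yields \(\mcQ(G, HH^v)\), which is exactly \(\mcP(HH^v)\). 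This shows that \(\mcP\)-subgroups are closed under \(H\mapsto HH^v\) for every vertex \(v\), giving the first hypothesis of Proposition~\ref{proposition: construction criteria for self similarity}.

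Second, for ascending unions, condition (d) says exactly that if each \(H_i\) in a chain satisfies \(\mcQ(G,H_i)\), then so does their closed union, giving the second hypothesis. Proposition~\ref{proposition: construction criteria for self similarity} then concludes that \(\mcP\) is self-similar. For the final clause, if \(\mcQ(G,H)\) forces \(H\) open, then Proposition~\ref{proposition: construction criteria for self similarity} tells us that hypothesis (2) there is unnecessary, so (d) is never invoked and may be dropped from the hypotheses on \(\mcQ\).

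The only place where any real subtlety arises is the use of fractality to identify the codomain \(\pi_v(G_v)\) with \(G\) itself, so that condition (b) gives a statement about the pair \((G, H^v)\) rather than merely \((\pi_v(G_v), H^v)\); without this identification, we could not combine \(\mcQ(G,H^v)\) with \(\mcQ(G,H)\) via (c). Everything else is a direct invocation of the hypotheses, and no further calculation is required.
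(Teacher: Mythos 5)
Your proof is correct and follows exactly the paper's argument: apply (a) with \(K=G_v\), then (b) with \(\pi_v\) using fractality to identify \(\pi_v(G_v)=G\), then (c) for the compositum, and (d) for ascending chains, all feeding into Proposition~\ref{proposition: construction criteria for self similarity}. No differences worth noting.
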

\begin{proof}
    Assume \(G\) is fractal. Suppose \(\mcQ(G,H)\). Apply (a) to obtain \(\mcQ(G_v,H_v)\), then (b) using \(\pi_v\) implies  \(\mcQ(\pi_v(G_v),\pi_v(H_v))\) hence \(\mcQ(G^v,H^v)\). This furnishes \(\mcQ(G,H^v)\) because \(G\) is fractal. Then we can apply (c) to conclude \(\mcQ(G,HH^v)\). Proposition~\ref{proposition: construction criteria for self similarity} tells us that \(\mcQ(G,-)\) is a self-similar property if either the subgroups \(H\) are open, or they are closed under closure of ascending union, which we have from (d).
\end{proof}
\begin{remark}
    While the criteria of Proposition~\ref{proposition: formation implies ssc} are quite strict, many interesting self-similar properties are induced from more general properties of groups in this way.
\end{remark}
\begin{proposition}\label{proposition: center is ssc}
    Let \(G\) be a fractal group. The set of subgroups of the center is self-similarity-closed.
\end{proposition}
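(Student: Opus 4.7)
The plan is to apply Proposition~\ref{proposition: formation implies ssc} with \(\mcQ(G,H)\) being the property ``\(H\subseteq Z(G)\)'' (where \(Z(G)\) denotes the center). Once the four hypotheses of that proposition are verified, self-similarity of \(\mcP(H) = \mcQ(G,H)\) is immediate, and since the self-similar closure of any subgroup of \(Z(G)\) still satisfies \(\mcP\), it must lie in \(Z(G)\); this is exactly the content of the set of subgroups of \(Z(G)\) being self-similarity-closed.

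I would verify the four conditions in turn. For (a), if \(H\subseteq Z(G)\) and \(K\leq G\), then \(H\cap K\subseteq Z(G)\cap K\), and anything centralizing all of \(G\) centralizes all of \(K\), so \(H\cap K\subseteq Z(K)\). For (b), a homomorphism \(\pi\) carries central elements of \(G\) into central elements of \(\pi(G)\), so \(\pi(H)\subseteq \pi(Z(G))\subseteq Z(\pi(G))\). For (c), \(Z(G)\) is itself a subgroup, so if \(H,K\subseteq Z(G)\) then \(HK\subseteq Z(G)\). For (d), since \(G\) is a profinite (hence Hausdorff topological) group, the center \(Z(G) = \bigcap_{g\in G}\{x: xg = gx\}\) is closed, so the topological closure of any ascending union of subgroups of \(Z(G)\) remains in \(Z(G)\).

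With these verified, Proposition~\ref{proposition: formation implies ssc} applies (using that \(G\) is fractal by hypothesis) and gives the result. There is essentially no obstacle here; the statement is a direct corollary. The only minor point worth noting is the closedness of \(Z(G)\), which is what allows us to invoke hypothesis (d) rather than needing to restrict to open subgroups.
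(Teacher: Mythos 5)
Your proposal is correct and follows the paper's proof exactly: both apply Proposition~\ref{proposition: formation implies ssc} to the property \(\mcQ(G,H)=\)``\(H\subseteq Z(G)\)'' and verify conditions (a)--(d) in the same way, with the closedness of the center handling condition (d). No gaps.
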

\begin{proof}
    We will verify the four conditions of Proposition~\ref{proposition: formation implies ssc}.
    \begin{enumalpha}
        \item If \(H\) is central in \(G\), all of its subgroups are too, so if \(K\) is any other subgroup, then \(H\cap K\) is central in \(G\), and therefore central in \(K\) too.
        
        \item If \(H\) is central in \(G\), the image of \(H\) remains central in the image of \(G\).
        
        \item If \(H\) and \(K\) are both contained in the center of \(G\), so too is their compositum \(HK\).
        
        \item The center is closed, so it is closed under ascending union and topological closure.
    \end{enumalpha}
\end{proof}

\begin{proposition}\label{proposition: pronilpotent normal is ssc}
    Let \(G\) be a fractal group. The set of open, normal, pronilpotent subgroups is self-similarity closed.
\end{proposition}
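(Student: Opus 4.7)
The plan is to apply Proposition~\ref{proposition: formation implies ssc} with the pair-property $\mcQ(G,H)$ defined as ``$H$ is open, normal, and pronilpotent in $G$.'' Since $\mcQ(G,H)$ forces $H$ to be open, condition (d) is not needed, and it suffices to verify (a), (b), and (c).

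Conditions (a) and (b) are essentially formal. For (a), if $K \leq G$, then $H \cap K$ is open in $K$ (as $H$ is open in $G$), normal in $K$ (intersection with a normal subgroup is normal in the ambient subgroup), and pronilpotent as a closed subgroup of a pronilpotent group. For (b), if $\pi : G \to G'$ is a continuous homomorphism, then $\pi(H)$ has finite index in $\pi(G)$ (hence is open), is normal in $\pi(G)$, and is pronilpotent as a continuous image of a pronilpotent group.

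The substantive step is (c): given $H, K$ both open, normal, and pronilpotent in $G$, we must show $HK$ has the same properties. Openness and normality of the subgroup $HK = KH$ are immediate, and in particular $HK$ is closed. For pronilpotence, I reduce to finite quotients: any continuous finite quotient of the closed subgroup $HK$ is dominated by one of the form $HK/(HK \cap N) \cong HKN/N$ for some open normal subgroup $N$ of $G$. Inside the finite group $G/N$, the images $HN/N$ and $KN/N$ are normal nilpotent subgroups, so Fitting's theorem for finite groups gives that their product $HKN/N$ is nilpotent. Hence every continuous finite quotient of $HK$ is nilpotent, i.e., $HK$ is pronilpotent.

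The only non-formal ingredient is Fitting's theorem on products of normal nilpotent subgroups in finite groups; the rest is just tracking that ``open, normal, pronilpotent'' passes through the relevant profinite operations and using the standard cofinality of $\{HK \cap N : N \trianglelefteq G \text{ open}\}$ in the open-normal-subgroup system of $HK$. Applying Proposition~\ref{proposition: formation implies ssc} then yields that the property is self-similar, which is the conclusion of Proposition~\ref{proposition: pronilpotent normal is ssc}.
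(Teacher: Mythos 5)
Your proposal is correct and follows essentially the same route as the paper: both invoke Proposition~\ref{proposition: formation implies ssc} with conditions (a) and (b) handled formally and condition (c) supplied by Fitting's theorem on products of normal (pro)nilpotent subgroups, with (d) dispensed with by openness. Your explicit reduction of the profinite Fitting statement to finite quotients is a detail the paper leaves to a citation, but the argument is the same.
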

\begin{proof}
    We will again use Proposition~\ref{proposition: formation implies ssc}. Normality and openness are self-similar, and conditions (a) and (b) are clearly satisfied even by general pronilpotent subgroups. However, normality is crucial for (c), which in this case is equivalent to Fitting's Theorem~\cite[5.2.8]{robinson:groups}, which says that the compositum of two pronilpotent normal subgroups is again a pronilpotent normal subgroup. 
\end{proof}

\begin{proposition}\label{proposition: induced ssc properties}
    Suppose \(\mcQ\) is a property of (individual) groups which is preserved under passage to subgroups, quotients, and group extension. Then set of subgroups of \(G\) which are open, normal, and satisfy \(\mcQ\) is self-similarity closed.
\end{proposition}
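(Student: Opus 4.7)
The plan is to apply Proposition~\ref{proposition: formation implies ssc} with the pair-property $\mcQ'(G,H)$ defined to hold precisely when $H$ is an open normal subgroup of $G$ for which $\mcQ$ holds as an abstract group. Since $\mcQ'$ forces $H$ to be open, condition (d) of that proposition will be unnecessary, leaving only (a), (b), and (c) to verify. In each of these, openness and normality are preserved by the respective operations (intersection with a subgroup, continuous image, compositum of two normal subgroups), so the work will reduce entirely to showing that $\mcQ$ is preserved on the corresponding groups.

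Conditions (a) and (b) will be handled directly from the subgroup- and quotient-closure hypotheses on $\mcQ$. For (a), $H \cap K$ is a subgroup of the $\mcQ$-group $H$, and so lies in $\mcQ$. For (b), the image $\pi(H)$ is a quotient of $H$ by $H \cap \ker\pi$, and so lies in $\mcQ$.

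Condition (c) is the essential step, and parallels the use of Fitting's Theorem in Proposition~\ref{proposition: pronilpotent normal is ssc}. Given $H, K \normal G$ both satisfying $\mcQ$, the compositum $HK$ is normal in $G$ and contains $K$ as a normal subgroup with quotient isomorphic to $H/(H\cap K)$. The resulting short exact sequence
\[1 \to K \to HK \to H/(H\cap K) \to 1\]
has both ends in $\mcQ$ ($K$ by hypothesis, $H/(H\cap K)$ as a quotient of the $\mcQ$-group $H$), so the extension-closure hypothesis will yield $HK \in \mcQ$. The main subtlety is that normality of $H$ and $K$ is crucial at exactly this step: without it, the sequence above would not exist, and $HK$ might not even be a subgroup. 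This is why normality has to be built into the self-similar property rather than imposed only on the starting subgroup.
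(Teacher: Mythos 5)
Your proof is correct and follows essentially the same route as the paper: both arguments verify conditions (a)--(c) of Proposition~\ref{proposition: formation implies ssc} (noting that openness makes (d) unnecessary), handle (a) and (b) via the subgroup- and quotient-closure of \(\mcQ\), and handle (c) by using normality and the second isomorphism theorem to realize \(HK\) as an extension of a quotient of one \(\mcQ\)-group by the other, then invoking extension-closure. The only cosmetic difference is that the paper writes the extension with kernel \(H\) and quotient \(HK/H \cong K/(H\cap K)\), while you put \(K\) in the kernel; the argument is identical.
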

\begin{proof}
    We will verify the three conditions of Proposition~\ref{proposition: formation implies ssc}. As remarked previously, normality and openness are self-similar in fractal groups, so we focus on \(\mcQ\); as in the pronilpotent case, we will make use of normality in checking the third condition of the critierion.
    
    Since \(\mcQ\) is preserved by passing to subgroups, the intersection \(H\cap K\) satisfies \(\mcQ\), hence (a) holds, which further implies (b) because \(\mcQ\) also respects quotients.

    As for (c), normality of \(H\) allows us to write \(HK\) as an extension:
\[0\longrightarrow H \longrightarrow HK \longrightarrow \frac{HK}{H} \longrightarrow 0\]
    The second isomorphism theorem can be applied because \(K\) is normal, hence
    \[\frac{HK}{H}\cong \frac{K}{H\cap K},\]
    and the right hand side is a quotient of \(K\), hence satisfies \(\mcQ\).
\end{proof}
\begin{corollary}\label{corollary: induced ssc examples}
    The following families of open normal subgroups of \(G\) are self-similarity closed:
    \begin{enumerate}
        \item Open, normal, and torsion.
        \item Open, normal, and pro-\(p\).
        \item Open, normal, and prosolvable.
    \end{enumerate}
\end{corollary}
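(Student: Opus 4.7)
The plan is to apply Proposition~\ref{proposition: induced ssc properties} to each of the three candidate properties $\mcQ$ of profinite groups in turn: being torsion, being pro-$p$, and being prosolvable. Once I verify that each $\mcQ$ is preserved under passage to (closed) subgroups, continuous quotients, and extensions of profinite groups, the corollary follows immediately, since the subgroups under consideration are assumed open and normal and the remaining conditions of the proposition are already baked into the statement.

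For pro-$p$ and prosolvability, the three closure properties are standard and I would merely cite them. A closed subgroup of a pro-$p$ (resp. prosolvable) profinite group is again pro-$p$ (resp. prosolvable); a continuous quotient likewise; and given a short exact sequence $1\to N\to G\to Q\to 1$ of profinite groups with $N$ and $Q$ pro-$p$ (resp. prosolvable), the middle term $G$ is pro-$p$ (resp. prosolvable). The extension step reduces to the finite case by the inverse-limit characterization: a finite quotient $G/K$ fits in an extension of $G/NK$, which is a quotient of $Q$, by $NK/K \cong N/(N\cap K)$, which is a quotient of $N$; extensions of finite $p$-groups (resp. finite solvable groups) by the same are again $p$-groups (resp. solvable).

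For the torsion case, closure under subgroups and quotients is immediate. Closure under extensions is also elementary: given $1\to N\to G\to Q\to 1$ with $N$ and $Q$ both torsion, any $g\in G$ maps to a finite-order element of $Q$, so some power $g^k$ lies in $N$, whence $g^{kn} = 1$ for some $n$ since $N$ is torsion.

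There is no real obstacle here; the substantive content lives in Proposition~\ref{proposition: induced ssc properties}, and this corollary simply specializes it to three classes of profinite groups that are well known to be closed under subgroups, quotients, and extensions.
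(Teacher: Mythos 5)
Your proposal is correct and matches the paper's intent exactly: the corollary is stated without proof as an immediate application of Proposition~\ref{proposition: induced ssc properties}, and the only content is the (standard) verification that torsion, pro-$p$, and prosolvable are each preserved under closed subgroups, continuous quotients, and extensions, which you carry out correctly.
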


\section{Main Results}\label{section: main}

We now combine the geometric considerations of Section~\ref{section: semiconjugacy} with the group-theoretic results from Section~\ref{section: self similar properties} into our main result, Theorem~\ref{theorem: self-similar upgrades}, which shows that self-similar properties tend to expand from open subgroups of the geometric pfIMG to the entire pfIMG. When they do not do so, the failure can be explained terms of Corollary~\ref{corollary: reduce to galois 
dynamical pullback} by the existence of a curve \(Y\) and endomorphism \(g:Y\to Y\) with projections \(\pi:Y\to \PP^1_{\bar K}\) such that \(\bArb_g\) is an open subgroup of \(\bArb_f\) which is maximal for the self-similar property in question.

\begin{theorem}\label{theorem: self-similar upgrades}
    Suppose that \(f\) is a tamely ramified endomorphism of \(\PP^1_{\bar K}\) and \(\bArb\) its geometric pfIMG. Let \(\mcP\) be a self-similar property. 

    If \(\bArb\) is virtually \(\mcP\), meaning it contains an open \(\mcP\)-subgroup, then either \(\bArb\) itself satisfies \(\mcP\), or \(\bArb\) is not \(\mcP\) and \(f\) is a proper quotient of a dynamical pullback. This proper quotient of a dynamical pullback is induced by an open normal subgroup \(N\) contained in a maximal open \(\mcP\)-subgroup \(M\).

    If \(f\) is a polynomial and not twisted Chebyshev, then \(\bArb\) is virtually \(\mcP\) if and only if it is \(\mcP\). If it is twisted Chebyshev and virtually \(\mcP\) but not \(\mcP\), then the subgroup generated by the odometer is the unique maximal \(\mcP\)-subgroup.
\end{theorem}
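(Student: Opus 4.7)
The plan is to bootstrap from an open \(\mcP\)-subgroup to a self-similar open \(\mcP\)-subgroup via self-similar closure, and then apply Theorem~\ref{theorem: classification of open self similar pfIMG quotients} to either force equality with \(\bArb\) or to extract the claimed dynamical pullback. Concretely, suppose \(\bArb\) contains an open \(\mcP\)-subgroup \(H\). Because \(\mcP\) is a self-similar property, the self-similar closure \(J\) of \(H\) is again a \(\mcP\)-subgroup. Since \(H\) is open it has finite index in \(\bArb\), so \(J\supseteq H\) has finite index and is therefore open; equivalently, the recursive construction of the self-similar closure stays inside the finite lattice of subgroups above \(H\) and terminates in finitely many steps, as noted after Definition~\ref{definition: self-similar closure}. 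Thus \(J\) is an open self-similar \(\mcP\)-subgroup. If \(J = \bArb\), then \(\bArb\) itself is \(\mcP\) and the first alternative of the theorem holds.

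Otherwise \(J\) is a proper open self-similar subgroup of \(\bArb\), so in particular \(\bArb\) is not \(\mcP\). Among the finitely many open subgroups of \(\bArb\) containing \(H\), pick a maximal open \(\mcP\)-subgroup \(M\). Its self-similar closure is self-similar, contains \(M\), and is a \(\mcP\)-subgroup, so by maximality it equals \(M\); hence \(M\) itself is self-similar. Let \(N\) be its normal core. By Corollary~\ref{corollary: reduce to galois dynamical pullback}, \(N\) is also open and self-similar, and Theorem~\ref{theorem: classification of open self similar pfIMG quotients} applied to \(M\) exhibits \(f\) as an induced dynamical pullback quotient via the cover \(\pi_N\), with \(N\) an open normal subgroup contained in the maximal open \(\mcP\)-subgroup \(M\). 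This is the second alternative.

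For the polynomial refinement, I would invoke the last clause of Theorem~\ref{theorem: classification of open self similar pfIMG quotients}: if \(f\) is a polynomial, any proper open self-similar subgroup of \(\bArb\) equals the rotation subgroup \(\llangle\odometer\rrangle\), and such a subgroup exists only in the twisted Chebyshev case. So when \(f\) is a polynomial but not twisted Chebyshev there are no proper open self-similar subgroups at all, forcing \(J=\bArb\) and \(\bArb\in\mcP\). When \(f\) is twisted Chebyshev and virtually \(\mcP\) but not \(\mcP\), the self-similar closure of every open \(\mcP\)-subgroup must be \(\llangle\odometer\rrangle\); in particular \(\llangle\odometer\rrangle\) itself is \(\mcP\) by self-similarity of \(\mcP\), and every open \(\mcP\)-subgroup is contained in it, so it is the unique maximal open \(\mcP\)-subgroup.

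The genuine content has all been done in Sections~\ref{section: semiconjugacy} and~\ref{section: self similar properties}; the present argument just threads them together. The one point that needs verification is that the self-similar closure of an open subgroup is open, which reduces to the finite-index containment observed above. The main conceptual subtlety is ensuring that the maximal \(\mcP\)-subgroup \(M\) is eligible for the classification theorem, i.e.\ that it is self-similar, but this is automatic from maximality combined with the hypothesis that \(\mcP\) is self-similar.
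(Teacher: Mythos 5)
Your proposal follows essentially the same route as the paper: pass to a maximal open \(\mcP\)-subgroup \(M\), observe that maximality plus self-similarity of \(\mcP\) forces \(M\) to equal its own self-similar closure, and then feed \(M\) into Theorem~\ref{theorem: classification of open self similar pfIMG quotients}. Your handling of the twisted Chebyshev uniqueness claim is in fact slightly more explicit than the paper's.

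There is one logical slip worth flagging. You case-split on whether the self-similar closure \(J\) of the given subgroup \(H\) equals \(\bArb\), and in the case \(J\neq\bArb\) you assert ``so in particular \(\bArb\) is not \(\mcP\).'' That inference is unjustified: the self-similar closure of one particular open \(\mcP\)-subgroup being proper does not prevent \(\bArb\) itself from satisfying \(\mcP\). Indeed this actually happens in the theorem's own exceptional case: for a twisted Chebyshev map with \(\mcP\) = ``open, normal, prosolvable,'' the subgroup \(\llangle\odometer\rrangle\) is a proper self-similar \(\mcP\)-subgroup (so \(J\neq\bArb\) for \(H=\llangle\odometer\rrangle\)), yet the pro-\(d\) dihedral group \(\bArb\) is itself prosolvable. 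In that situation your argument would incorrectly place us in the second alternative and conclude \(\bArb\) is not \(\mcP\). The fix is the one the paper uses: case-split on the maximal subgroup \(M\) instead. If \(M=\bArb\) then \(\bArb\) is \(\mcP\); if \(M\neq\bArb\) then \(\bArb\) is not \(\mcP\) precisely \emph{by maximality of \(M\)} (were \(\bArb\) itself \(\mcP\), it would be an open \(\mcP\)-subgroup properly containing \(M\)), and the normal core \(N\subsetneq\bArb\) supplies the proper induced pullback. With that adjustment your argument is complete and matches the paper's.
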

\begin{proof}
    If \(\bArb\) has an open subgroup satisfying \(\mcP\), then it has a maximal open subgroup \(M\) satisfying \(\mcP\). The self-similar closure of \(M\) satisfies \(\mcP\) too, so by maximality \(M\) must coincide with its self-similar closure, hence is self-similar. By Theorem~\ref{theorem: classification of open self similar pfIMG quotients}, the normal core \(N\) of \(M\) realizes \(f\) as a quotient of a dynamical pullback, and it is proper precisely when \(N\) is a proper subgroup of \(\bArb\). Since \(N\) is the normal core, \(N=\bArb\) if and only if \(M=\bArb\). If \(N=\bArb\), then \(M=N=\bArb\) is \(\mcP\). Otherwise, \(M\) and \(N\) are proper subgroups of \(\bArb\): the former means \(\bArb\) is not \(\mcP\), by maximality of \(M\), and the latter means that \(f\) is a proper quotient of a dynamical pullback.

    For polynomials, Theorem~\ref{theorem: classification of open self similar pfIMG quotients} showed that the only proper quotients of dynamical pullbacks are the twisted Chebyshev maps, and that the normal subgroup \(N\) is the subgroup generated by the odometer. 
\end{proof}

\begin{corollary}\label{corollary: main results}
    Continue from the assumptions of Theorem~\ref{theorem: self-similar upgrades}. Suppose \(f\) is either a rational function and not a proper quotient of a dynamical pullback, or \(f\) is a polynomial and not twisted Chebyshev.
    \begin{enumerate}
        \item If \(\bArb\) is virtually torsion, then it is torsion.
        \item If \(\bArb\) is virtually pro-\(p\), then \(G\) pro-\(p\).
        \item If \(\bArb\) is virtually prosolvable, then it is prosolvable.
        \item If \(\bArb\) is virtually pronilpotent, then it is pronilpotent.
        \item If \(\bArb\) contains the standard odometer \(\omega\) and \(\llangle \omega\rrangle\) is open, then \(\bArb\) is generated by the odometer.
        \item If \(\bArb\) contains the standard odometer \(\omega\) and the center of \(\bArb\) is open, then \(G\) is abelian.
    \end{enumerate}
    
    In Case (6), \(G\) is procyclic and generated by the odometer.
\end{corollary}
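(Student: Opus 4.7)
The plan is to apply Theorem~\ref{theorem: self-similar upgrades} case-by-case, with the appropriate self-similar property supplied by the machinery of Section~\ref{section: self similar properties}. Under our hypothesis on \(f\), the ``exceptional'' alternative in that theorem is ruled out (no proper dynamical pullback quotient exists; in the polynomial case, no twisted Chebyshev map), so the theorem reduces in each case to the bare implication ``virtually \(\mcP\) implies \(\mcP\)'', which is exactly what (1)--(6) assert.

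For cases (1)--(4), I would take \(\mcP\) to be ``open, normal, and \(\mcQ\)'' with \(\mcQ\) ranging over torsion, pro-\(p\), prosolvable, and pronilpotent. Self-similarity of \(\mcP\) is Corollary~\ref{corollary: induced ssc examples} (for the first three) and Proposition~\ref{proposition: pronilpotent normal is ssc} (for the fourth). The only gap to bridge is that ``virtually \(\mcQ\)'' a priori only produces an open, not necessarily normal, \(\mcQ\)-subgroup; this gap is closed by passing to the normal core, which is open (finitely many conjugates of an open subgroup), obviously normal, and still \(\mcQ\) because each of the four properties is inherited by subgroups. Theorem~\ref{theorem: self-similar upgrades} then yields that \(\bArb\) itself is an open normal \(\mcQ\)-subgroup of \(\bArb\), hence \(\mcQ\).

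For cases (5) and (6), I would take \(\mcP\) to be ``is a subgroup of \(\Odometer\)'' (self-similar by Proposition~\ref{proposition: odometer is ssc}) and ``is contained in the center of \(\bArb\)'' (self-similar by Proposition~\ref{proposition: center is ssc}), respectively. The hypothesis that \(\llangle\omega\rrangle = \Odometer\) is open in case (5), or that the center is open in case (6), exhibits an open \(\mcP\)-subgroup in each case, so \(\bArb\) is virtually \(\mcP\). The non-exceptional conclusion of Theorem~\ref{theorem: self-similar upgrades} then reads \(\bArb\subseteq \Odometer\) and \(\bArb\subseteq Z(\bArb)\), respectively; combined with the trivial reverse inclusions, these give \(\bArb=\Odometer=\llangle \omega\rrangle\) and \(\bArb\) is abelian.

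The final procyclic assertion in case (6) rests on the self-centralizing property of the odometer already invoked in the proof of Theorem~\ref{theorem: classification of open self similar pfIMG quotients}: any abelian subgroup containing \(\omega\) lies inside the centralizer of \(\omega\), which is \(\llangle \omega\rrangle\). Applied to the abelian \(\bArb\), this forces \(\bArb\subseteq \llangle\omega\rrangle\), and the reverse containment is immediate, so \(\bArb=\llangle\omega\rrangle\) is procyclic. I expect no serious obstacle: the substantive work is packaged in Theorem~\ref{theorem: self-similar upgrades} and the self-similar closure results, and the only minor points of care are the normal-core reduction in (1)--(4) and the self-centralizing fact used at the end of (6).
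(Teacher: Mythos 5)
Your proposal is correct and follows essentially the same route as the paper: reduce to an open \emph{normal} witness via the normal core (the paper phrases this as the finite intersection of conjugates of the open witness), invoke the same four self-similarity results (Corollary~\ref{corollary: induced ssc examples}, Propositions~\ref{proposition: pronilpotent normal is ssc}, \ref{proposition: odometer is ssc}, and \ref{proposition: center is ssc}), feed them into Theorem~\ref{theorem: self-similar upgrades}, and finish case (6) with the self-centralizing property of the odometer.
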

\begin{proof}
    Condition (5) is equivalent to ``\(\llangle \omega\rrangle\) contains an open subgroup'' and Condition (6) is equivalent to ``the center contains an open subgroup''. In each case \(\bArb\) has a \emph{normal} open subgroup witnessing property, because if \(H\) is an open witness, then the (finite) intersection of all conjugates of \(H\) still has the property, remains open, and is normal.

    With that in place, all these properties are self-similar. The references are as follows:
   \begin{itemizeblank}
        \item (1), (2), and (3) in Corollary~\ref{corollary: induced ssc examples}.
        \item (4) in Proposition~\ref{proposition: pronilpotent normal is ssc}.
        \item (5) in Proposition~\ref{proposition: odometer is ssc}.
        \item (6) in Proposition~\ref{proposition: center is ssc}
   \end{itemizeblank}

    For the strengthening of (6), if \(\bArb\) is abelian, every element of \(\bArb\) commutes with the odometer. The odometer, however, is self-centralizing, so this can only happen if \(\bArb\) is contained in the subgroup generated by the odometer.
\end{proof}

\section{Applications}

The main results can be applied to answer or partially answer some questions raised in~\cite[Section 5]{BGJT:specializations} about the Frattini subgroup of profinite iterated monodromy groups. These answers are essentially complete for rational maps which are not proper quotients of a dynamical pullback. In the polynomial case, we saw in Theorem~\ref{theorem: classification of open self similar pfIMG quotients} that only the twisted Chebyshev maps arise, so the only exceptional group is the infinite pro-\(d\) dihedral group, which can be handled separately. 

\subsection{Unicritical Frattini}
In~\cite[Section 5]{BGJT:specializations}, the authors asked whether a unicritical polynomial whose degree is not a prime power could have a profinite iterated monodromy group with an open Frattini subgroup. We answer this in the negative, along the way ruling out the possibility of an open Frattini subgroup for many polynomials, and showing that, typically, open Frattini (more generally, virtually pronilpotent) implies pro-\(p\).

\begin{proposition}\label{proposition: polynomial open Frattini to pronilpotent}
    Let \(f\) be a rational map over \(K\) of degree \(d\) and tamely ramified, and let \(\bArb\) be the geometric profinite iterated monodromy group. Suppose \(f\) is not a proper quotient of a dynamical pullback. Then the Frattini subgroup of \(\bArb\) is open if and only if \(\bArb\) is pronilpotent and finitely generated.

    Suppose further that \(f\) is a polynomial. Then:
    \begin{enumerate}
        \item If \(f\) is a twisted Chebyshev map, then \(\bArb\) is pro-\(d\) dihedral, and hence finitely generated with open Frattini subgroup in all cases, but is only pronilpotent when \(d\) is a power of \(2\).
        \item If \(f\) is not a twisted Chebyshev map, the Frattini subgroup of \(\bArb\) is open if and only if \(\bArb\) is finitely generated and pronilpotent.
    \end{enumerate}
\end{proposition}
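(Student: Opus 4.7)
The plan is to combine two classical profinite group facts with Corollary~\ref{corollary: main results}(4), and then analyze the pro-$d$ dihedral group separately for the twisted Chebyshev case. I would first recall two standard facts about a profinite group $G$: (i) $\Phi(G)$ is always pronilpotent, since it is the inverse limit of the Frattini subgroups of the finite quotients, each of which is nilpotent; and (ii) $\Phi(G)$ is open if and only if $G/\Phi(G)$ is finite, in which case any lift of a generating set of the finite quotient topologically generates $G$.

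For the forward direction of the main equivalence, if $\Phi(\bArb)$ is open then $\bArb$ has an open pronilpotent subgroup, so is virtually pronilpotent. Corollary~\ref{corollary: main results}(4), which applies since $f$ is not a proper quotient of a dynamical pullback, upgrades this to $\bArb$ being pronilpotent, and finite generation follows from (ii). For the converse, the key structural input is that $\bArb$ acts faithfully on the $d$-regular tree, so it embeds into the inverse limit of the iterated wreath products $S_d \wr \cdots \wr S_d$; in particular, the supernatural order of $\bArb$ is divisible only by primes $\le d$, a finite set. A pronilpotent profinite group whose order involves only finitely many primes decomposes as a \emph{finite} Sylow product $\bArb = \prod_{p\le d} P_p$, and if $\bArb$ is topologically finitely generated then so is each $P_p$. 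Each $P_p$ is then a finitely generated pro-$p$ group, so $\Phi(P_p) = P_p^p[P_p, P_p]$ is open. In a finite direct product of profinite groups with pairwise coprime orders, every closed subgroup splits as a product of its projections, so maximal open subgroups (and hence Frattini) also split: $\Phi(\bArb) = \prod_{p \le d} \Phi(P_p)$ is open.

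The main obstacle, and the only step requiring explicit calculation, is part (1). By Theorem~\ref{theorem: classification of open self similar pfIMG quotients}, $\bArb$ is the pro-$d$ dihedral group $D_d = \ZZ_d \rtimes \ZZ/2\ZZ$ with inversion action, generated by the odometer and any reflection, hence finitely generated. For $\Phi(D_d)$: for each prime $p \mid d$, the surjection $D_d \twoheadrightarrow D_{2p}$ reducing rotations modulo $p$ hits a finite dihedral group whose Frattini is trivial, forcing $\Phi(D_d) \subseteq p\ZZ_d$; intersecting over $p \mid d$ gives $\Phi(D_d) \subseteq \rad(d)\cdot \ZZ_d$. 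Enumerating the maximal open subgroups of $D_d$ --- namely $\ZZ_d$ together with, for each $p \mid d$, the $p$ conjugates of $p\ZZ_d \rtimes \langle s\rangle$ for a reflection $s$ --- shows the containment is an equality, so $\Phi(D_d)$ is open of index $2\rad(d)$. For pronilpotency, the odd-$p$ pro-Sylow is the corresponding $\ZZ_p$-factor of $\ZZ_d$ (always normal), but the pro-$2$ Sylow must contain a reflection $s$, and a rotation $t$ conjugates $s$ to $t^2 s$, which remains in the pro-$2$ Sylow for every $t \in \ZZ_d$ precisely when $\ZZ_d$ has no odd-prime component, that is, when $d$ is a power of $2$.
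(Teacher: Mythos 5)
Your proof is correct and follows essentially the same route as the paper: the forward direction via pronilpotence of the Frattini subgroup plus Corollary~\ref{corollary: main results}(4), and the finite-level enumeration of maximal subgroups of the dihedral quotients giving \(\Phi = \langle\sigma^{\rad(d)}\rangle\) of index \(2\rad(d)\), with the coprime-order/non-normal-Sylow argument for when the pro-\(d\) dihedral group fails to be pronilpotent. You are in fact more explicit than the paper on the converse (pronilpotent and finitely generated implies open Frattini), which the paper treats as standard: your observation that only primes at most \(d\) can divide the order of \(\bArb\) (via the wreath-product embedding) is genuinely needed there, since a finitely generated pronilpotent group with infinitely many primes in its order, such as \(\prod_p \ZZ/p\ZZ\), need not have open Frattini subgroup.
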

\begin{proof}
The Frattini subgroup is pronilpotent, so open Frattini implies virtually pronilpotent, in which case Corollary~\ref{corollary: main results}(4) requires that \(\bArb\) is pronilpotent when \(f\) is not a proper quotient of a dynamical pullback or, if it is a polynomial, that it is not twisted Chebyshev. It only remains to verify the group-theoretic claims about the pro-\(d\) dihedral group.

Let \(r=\rad (d)\) be the product of each prime divisor of \(d\) exactly once, and let \(\sigma\) and \(\tau\) be the rotation and flip, respectively, generating a dihedral group \(D_{2\cdot k}\). The subgroup \(\langle \sigma\rangle\) is maximal. The other maximal subgroups take the form \(\langle \sigma^p,\sigma^i\tau\rangle\) for \(p\) a divisor of \(k\). It follows that the Frattini subgroup is \(\langle \sigma^r\rangle\), which has index \(2r\) in \(D_{2\cdot k}\); in particular, it does not vary among \(D_{2\cdot d^n}\) as \(n\) goes to \(\infty\), so the index of \(\Phi\) in \(\bArb\) is likewise \(2r\).

Moreover, in a pronilpotent group, elements of coprime order must commute. The order of \(\tau\) is \(2\), so \(D_{2\cdot d^\infty}\) can only be pronilpotent if the order of \(\sigma\) in the profinite sense is a power of \(2\), and so \(d\) must be a power of \(2\) as well.
\end{proof}

The easiest way for a group to be pronilpotent is to be pro-\(p\). For polynomials, it turns out that this if often the only way to have a pronilpotent profinite iterated monodromy group.
\begin{proposition}\label{proposition: nilp to pro p criterion}
    Let \(f\) be a polynomial over \(K\) of degree \(d\) and tamely ramified. Make a choice of paths such that \(\bArb\) contains the standard odometer \(\odometer\) as an inertia subgroup generator at \(\infty\). Suppose \(\bArb\)  pronilpotent. If \(\bArb\) contains a nontrivial element of the form \(g = (1,...,1,h,1,...,1)\) then \(d\) is a power of some prime \(p\) and \(\bArb\) is pro-\(p\).
\end{proposition}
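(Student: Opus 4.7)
The plan rests on two standard facts about pronilpotent profinite groups: every such \(G\) decomposes as \(G = \prod_p G(p)\), the direct product of its pro-\(p\) Sylow subgroups, and elements from different Sylows commute. The key tension will be this: conjugation by the odometer must shift \(g\)'s unique nontrivial wreath coordinate by exactly one position, but in the Sylow decomposition each \(p\)-component \(\omega_p\) of \(\omega\) shifts positions on level \(1\) by an amount dictated by the \(p\)-idempotent of \(\ZZ_d\). Matching the two will force \(d\) to be a prime power.

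Write \(\omega = \prod_p \omega_p\) and \(g = \prod_p g_p\) in the Sylow decomposition. Since \(\omega\) is self-centralizing in \(\Aut T\) and each \(\omega_p\) commutes with \(\omega\), the \(\omega_p\) all lie in \(\Omega \cong \ZZ_d\), so \(\omega_p = \omega^{e_p}\) for the \(p\)-idempotent \(e_p \in \ZZ_d\) (with \(e_p \equiv 1 \bmod p^{a_p}\) and \(e_p \equiv 0 \bmod m_p\) for \(m_p = d/p^{a_p}\); and \(\omega_p = 1\) if \(p\nmid d\)). In particular \(\omega_p\) acts on level \(1\) by \(\sigma^{e_p}\). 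Since \(g\) stabilizes level \(1\) and the Sylow decomposition descends to the finite nilpotent quotient \(\bArb/\bArb_{(1)}\), each \(g_p\) stabilizes level \(1\) too. The coordinate projections \(\pi_j\) are continuous and carry pro-\(p\) to pro-\(p\), so every wreath coordinate \(g_{p,j}\) lies in \(\bArb(p)\). Applying uniqueness of the Sylow decomposition coordinatewise to \(g = (1,\ldots,1,h,1,\ldots,1)\) yields \(g_p = (1,\ldots,1,h_p,1,\ldots,1)\) with \(h_p\) in position \(i\) and \(h = \prod_p h_p\).

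Now compute \(\omega g \omega^{-1}\) two ways. Directly from the wreath recursion of \(\omega\), its unique nontrivial coordinate sits at position \(i-1 \bmod d\). On the other hand, since Sylows commute pairwise, \(\omega g_p \omega^{-1} = \omega_p g_p \omega_p^{-1}\), and the wreath calculation places its unique nontrivial coordinate at position \(i-e_p \bmod d\). Level-\(1\) multiplication is coordinatewise, so uniqueness of the Sylow decomposition again forces every nontrivial \(\omega_p g_p \omega_p^{-1}\) to have its coordinate at position \(i-1 \bmod d\), giving \(e_p \equiv 1 \bmod d\). For \(p \nmid d\) this is impossible (\(e_p = 0\)), and for \(p \mid d\) it forces \(m_p = 1\), i.e., \(d = p^{a_p}\). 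Since \(g \neq 1\), some \(g_p\) is nontrivial, so \(d = p^a\) for a unique prime \(p\).

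It remains to upgrade \(d = p^a\) to \(\bArb\) being pro-\(p\). Each other Sylow \(\bArb(q)\) with \(q \neq p\) is closed and characteristic, and the coordinate projections carry \(\bArb(q)_v\) (a closed pro-\(q\) subgroup) into pro-\(q\) subgroups of \(\bArb\), hence into \(\bArb(q)\), so \(\bArb(q)\) is self-similar. Its image on level \(1\) is a \(q\)-group acting on a set of size \(p^a\); since any orbit has \(q\)-power size while \(p^a\) is coprime to \(q\), all orbits are singletons and \(\bArb(q) \subseteq \bArb_{(1)}\). Iterating the same argument at each level via self-similarity gives \(\bArb(q) \subseteq \bArb_{(n)}\) for every \(n\), so \(\bArb(q) = 1\); hence \(\bArb = \bArb(p)\). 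The main obstacle is the bookkeeping in the support-shift comparison --- pinning down \(\omega_p\) as a concrete power of \(\omega\) via the self-centralizing property, and verifying that the Sylow decomposition is compatible with the coordinatewise multiplication on \(\bArb_{(1)}\) so that positions of nontrivial coordinates combine additively under Sylow products.
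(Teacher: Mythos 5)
Your main argument is correct and is, at bottom, the same as the paper's: the paper chooses a nontrivial power \(g^a\) of \(g\) and a nontrivial power \(\omega^b\) of the odometer with coprime supernatural orders and \(b\not\equiv 0\bmod d\), notes that pronilpotence forces them to commute, and derives a contradiction because conjugation by \(\omega^b\) shifts the unique nontrivial wreath coordinate of \(g^a\) by \(b\bmod d\neq 0\). Your \(\omega_p=\omega^{e_p}\) is exactly the paper's \(\omega^b\), and ``distinct Sylow components commute'' is the paper's ``elements of coprime order commute,'' so the support-shift comparison is the same mechanism; your version is more systematic and in addition pins down that \(g\) itself is a \(p\)-element. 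You also supply the upgrade from ``\(d\) is a prime power'' to ``\(\bArb\) is pro-\(p\),'' which the paper's printed proof does not spell out, so that part is genuinely additional content.

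However, one inference in that upgrade is invalid as stated: from ``the image of \(\bArb(q)\) on level \(1\) is a \(q\)-group acting on \(p^a\) points'' you conclude that all orbits are singletons because orbit sizes are \(q\)-powers and \(\gcd(q,p^a)=1\). That does not follow: \(p^a\) can be a sum of nontrivial \(q\)-powers (e.g.\ \(9=4+4+1\)), so a \(q\)-group can act nontrivially on \(p^a\) points. The step is repairable with structure you already have in hand: \(\bArb(q)\) is normal and \(\bArb\) acts transitively on level \(1\), so the orbits of \(\bArb(q)\) form a block system of common cardinality, which is a \(q\)-power dividing \(p^a\) and hence equals \(1\); equivalently, a point stabilizer has index \(p^a\) in the finite nilpotent level-\(1\) image and therefore contains its entire \(q\)-part. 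A shorter route to the same conclusion, consonant with arguments used elsewhere in the paper: every element of \(\bArb(q)\) commutes with \(\omega\) by pronilpotence, the odometer is self-centralizing with centralizer \(\Omega\cong\ZZ_d=\ZZ_p\), so \(\bArb(q)\subseteq\Omega\) is simultaneously pro-\(q\) and pro-\(p\), hence trivial. With that one step patched, your proof is complete.
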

\begin{proof}
    Replacing \(g\) with a conjugate by \(\odometer\), we may assume that \(h\) is in the first coordinate.
    
    If \(d\) is not a prime power, then there are profinite integers \(a\) and \(b\) such that: \(b\) is not divisible by \(d\), and the two elements \(g^a\) and \(\odometer^b\) are nontrivial and have coprime orders in the profinite sense. 
    
    Write \(b = k \bmod d\), with \(1\leq k < d\), so
    \[\odometer^b = (\odometer^{c_1},...,\odometer^{c_d})\sigma^k,\]
    where the \(c_i\) are certain exponents determined by \(b\).
    
    Since \(\bArb\) is pronilpotent, it splits into a direct product of its Sylow subgroups, so elements of coprime order commute. Applied to \(g^a\) and \(\odometer^b\), one would obtain:
    \begin{align*}
        (h,1,...,1)
          &= g^a \\
          &= \odometer^{-b}g^a\odometer^b \\
          &= (1,...,1,\odometer^{-c_k} h \odometer^{c_k},1,...)
    \end{align*}
    where the final nonzero coordinate is in index \(1+k \neq 1\), a contradiction.
\end{proof}

\begin{corollary}\label{corollary: open frattini implies pro-p}
    Suppose \(f\) is a polynomial over \(K\) of degree \(d\), tamely ramified, and not conjugate to a twisted Chebyshev map. Let \(\bArb\) be its profinite iterated monodromy group, and assume it contains the standard odometer. Let \(B\) denote the (strict) critical orbit.

    Suppose there is some \(b\) in \(B\) such that \(f\inv (b)\) contains no critical points and \(f\inv(b)\cap B\) has just one element. If \(G\) is virtually pronilpotent, then it is pro-\(p\).
\end{corollary}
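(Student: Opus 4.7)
The plan is to combine the two preceding results. First, since $f$ is a polynomial that is not twisted Chebyshev and $\bArb$ is virtually pronilpotent, Corollary~\ref{corollary: main results}(4) upgrades this to $\bArb$ being pronilpotent outright; the hypothesis that $\bArb$ contains the standard odometer places us in the setup of Proposition~\ref{proposition: nilp to pro p criterion}, so it suffices to exhibit a nontrivial element $g \in \bArb$ of the form $(1,\ldots,1,h,1,\ldots,1)$.

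The natural candidate is an inertia generator $\gamma_b \in \bArb$ at the branch point $b$, and the two hypotheses on $b$ translate cleanly. Since $f\inv(b)$ contains no critical points, $b$ is not a branch point of $f$ itself, so the image of $\gamma_b$ in $\Gal(\bar K(f\inv(t))/\bar K(t))$ is trivial; thus $\gamma_b$ fixes level one and decomposes as $\gamma_b = (\gamma_{b,0},\ldots,\gamma_{b,d-1})$. A wreath-recursion analysis then identifies each $\gamma_{b,i}$ with an inertia generator at one of the $d$ preimages $b_0,\ldots,b_{d-1}$ of $b$, matched up by the paths $\lambda_i$. By hypothesis, exactly one preimage, call it $b'$, lies in $B$. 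For any preimage $y \notin B$, forward invariance of the critical orbit gives $f^{-n}(y) \cap B = \emptyset$ for all $n \geq 0$, so $y$ is never a branch point of any iterate of $f$ and the corresponding coordinate $\gamma_{b,i}$ is trivial. The one remaining coordinate, corresponding to $b'$, is an inertia generator at $b'$, which is nontrivial because $b' \in B$ is a branch point of some iterate $f^n$. Hence $\gamma_b$ takes the required form with $h \neq 1$, and Proposition~\ref{proposition: nilp to pro p criterion} forces $d$ to be a prime power and $\bArb$ to be pro-$p$.

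The main obstacle is making the wreath-recursion identification of $\gamma_{b,i}$ with inertia at $b_{\sigma(i)}$ fully rigorous. Geometrically, a small loop around $b$ in the $t$-line lifts through the cover $f$, which is unramified over $b$, to $d$ small loops at the preimages $b_0,\ldots,b_{d-1}$; one must track these lifts through the self-similarity isomorphism $\pi_{t_i}\colon \bArb_{t_i} \to \bArb$ induced by the path $\lambda_i$ to see that each lift becomes an inertia generator at $b_{\sigma(i)}$ inside $\bArb$, with $\sigma$ a bijection determined by the chosen path data. Once this identification is set up, everything else is a direct application of the hypotheses and the two cited propositions.
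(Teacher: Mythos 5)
Your proposal is correct and follows essentially the same route as the paper: upgrade virtual pronilpotence to pronilpotence via Corollary~\ref{corollary: main results}(4), exhibit an inertia generator over \(b\) supported in a single coordinate, and invoke Proposition~\ref{proposition: nilp to pro p criterion}. The wreath-recursion identification you flag as the main obstacle is exactly the step the paper outsources to \cite[Lemma 3.12]{AH:unicritical} rather than proving from scratch.
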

\begin{proof}
    Since \(f\) is not twisted Chebyshev, virtual pronilpotent implies \(\bArb\) is pronilpotent by Corollary~\ref{corollary: main results}.
    Moreover, if \(\gamma\) is any generator of an inertia subgroup over \(b\), then~\cite[Lemma 3.12]{AH:unicritical}, show it is of the form
    \[\gamma = (1,...,1,\gamma',1,...,1).\]
    This, by Proposition~\ref{proposition: nilp to pro p criterion}, requires that \(\bArb\) is pro-\(p\).

    The final claim follows from the classical fact that the Frattini subgroup is pronilpotent.
\end{proof}

There are polynomials to which Corollary~\ref{corollary: open frattini implies pro-p} does not apply, but it requires that the branch locus have a somewhat ``dense'' portrait in the sense that every branch point would need to have a preimage which is a critical point, or at least two preimages also in the branch locus. The unicritical polynomials are among those to which the corollary applies.
\begin{corollary}\label{corollary: main special cases}
    If the Frattini subgroup of \(\bArb\) is open in either of these cases:
    \begin{enumerate}
        \item \(f\) is post-critically infinite.
        \item \(f\) is unicritical but not conjugate to a powering map.
    \end{enumerate}
    then \(\bArb\) is pro-\(p\) and \(\deg f\) is a prime power.
\end{corollary}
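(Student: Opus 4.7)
The plan is to combine the classical fact that the Frattini subgroup of any profinite group is pronilpotent with Corollary~\ref{corollary: main results}(4) and Corollary~\ref{corollary: open frattini implies pro-p} (together with its underlying Proposition~\ref{proposition: nilp to pro p criterion}). Open Frattini makes \(\bArb\) virtually pronilpotent; once we verify that \(f\) is not twisted Chebyshev in each case, Corollary~\ref{corollary: main results}(4) upgrades this to pronilpotence. Pronilpotence together with any nontrivial element of wreath form \((1,\dots,1,h,1,\dots,1)\) then forces \(\bArb\) to be pro-\(p\) by Proposition~\ref{proposition: nilp to pro p criterion}. Since \(\bArb\) acts transitively on the first level of the \(d\)-ary tree and open subgroups of pro-\(p\) groups have \(p\)-power index, \(d\) must be a power of \(p\).

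For case (1), a twisted Chebyshev polynomial is post-critically finite, so PCI excludes it. To verify the hypothesis of Corollary~\ref{corollary: open frattini implies pro-p}, pick a strict critical value \(c_0\) with infinite forward orbit and set \(b=f^k(c_0)\) for \(k\) large. The finitely many strict critical values are avoided by \(b\) for large \(k\), so \(f^{-1}(b)\) contains no critical points. A critical value whose orbit is disjoint from that of \(c_0\) contributes nothing to \(f^{-1}(b)\cap B\), while one whose orbit eventually merges with \(c_0\)'s contributes the same preimage \(f^{k-1}(c_0)\); hence \(|f^{-1}(b)\cap B|=1\).

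For case (2), a unicritical polynomial of degree \(d\ge 3\) has a single finite critical point and so cannot be twisted Chebyshev. For \(d=2\), the only twisted Chebyshev unicritical non-powering polynomial is (a conjugate of) \(T_2(x)=x^2-2\); for this \(\bArb\) is the pro-\(2\) dihedral group, already pro-\(2\) with \(d=2\) a prime power. Otherwise \(f\) is not twisted Chebyshev. Write \(f(x)=x^d+c\) with \(c\ne 0\); since \(f\) is not a powering map, \(c\) is not a fixed point. A short case analysis on the critical orbit shows that \(b=f(c)\) (or \(b=f^2(c)\) when \(c\) has tail \(1\) into a cycle of length \(\ge 2\)) satisfies the two conditions of Corollary~\ref{corollary: open frattini implies pro-p}.

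The main obstacle is the remaining subcase, \(c\) with tail \(1\) into a fixed point and \(d\ge 3\): then \(B=\{c,f(c)\}\) and no \(b\in B\) satisfies both hypotheses. Here we produce the required element by hand. In appropriate coordinates \(f(x)=x^d+c\) with \(c^{d-1}=\zeta-1\) for a nontrivial \(d\)-th root of unity \(\zeta\), so \(f(c)=\zeta c\) is fixed and \(f^{-1}(\zeta c)=\{\eta c:\eta^d=1\}\), of which exactly two preimages (namely \(c\) and \(\zeta c\)) lie in \(B\). The inertia at \(c\) is the standard odometer \(\alpha=\odometer\), satisfying \(\alpha^d=(\alpha,\dots,\alpha)\cdot\mathrm{id}\), while the inertia \(\beta\) at the fixed post-critical point \(\zeta c\) has wreath form \(\beta=(\alpha,\beta,1,\dots,1)\cdot\mathrm{id}\): two entries are nontrivial (the self-similar \(\beta\) at the sheet over \(\zeta c\) and \(\alpha\) at the sheet over \(c\)), while the remaining \(d-2\) entries are trivial because \(f^n\) is never ramified over those preimages (whose orbits do not contain the lone critical point \(0\)). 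A componentwise calculation of \(\beta^{-1}\alpha^d\beta\cdot\alpha^{-d}\) gives entries \([\alpha,\alpha]=1\), \([\beta,\alpha]\), and \([1,\alpha]=1\), and a short wreath-product check shows \(\alpha\) and \(\beta\) do not commute, so \([\beta,\alpha]\ne 1\). This furnishes the required \((1,\dots,1,h,1,\dots,1)\) element, and Proposition~\ref{proposition: nilp to pro p criterion} completes the proof.
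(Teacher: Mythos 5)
Your overall strategy (Frattini pronilpotent \(\Rightarrow\) virtually pronilpotent \(\Rightarrow\) pronilpotent via Corollary~\ref{corollary: main results}(4) \(\Rightarrow\) pro-\(p\) via Proposition~\ref{proposition: nilp to pro p criterion}) is exactly the paper's, and your treatment of case (1), of the twisted Chebyshev exclusions, and of the subcases of (2) where Corollary~\ref{corollary: open frattini implies pro-p} applies directly all match the paper and are correct. You also correctly isolate the one subcase that resists that corollary: \(d>2\) unicritical with critical value strictly preperiodic into a fixed point, so \(B=\{c,\zeta c\}\).

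However, your hand computation in that remaining subcase rests on a false identification: the inertia generator over the critical value \(c\) is \emph{not} the standard odometer. The odometer is the inertia at the totally ramified fixed point \(\infty\); the inertia over \(c\) satisfies a recursion of the form \(\gamma_c=(1,\dots,1,\gamma_0)\sigma\) where \(\gamma_0\) is the inertia over the critical point \(0\), which is trivial because \(0\) is not post-critical, so \(\gamma_c\) has order exactly \(d\). In particular the identity \(\alpha^d=(\alpha,\dots,\alpha)\) fails for \(\alpha=\gamma_c\) (indeed \(\gamma_c^d=1\)); and if you instead take \(\alpha=\odometer\) to be the genuine odometer at \(\infty\), then the coordinate of \(\beta\) over the sheet above \(c\) is a conjugate of \(\gamma_c\), not \(\alpha\). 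Redoing your componentwise calculation with the correct data, the first coordinate of \(\beta\inv\odometer^d\beta\,\odometer^{-d}\) is a commutator of \(\odometer\) with a conjugate of \(\gamma_c\), which is nontrivial because \(\gamma_c\) is torsion and the centralizer \(\Odometer\) of the odometer is torsion-free. So your element is supported in \emph{two} coordinates rather than one, and Proposition~\ref{proposition: nilp to pro p criterion} does not apply to it. This is precisely why the paper does not use a single commutator here: it invokes the explicit double-commutator element \(([b_m,[b_m,b_n]],1,\dots,1)\) computed in case (C) of \cite{AH:unicritical}, together with a change-of-paths remark to normalize the coordinates. To repair your argument you would need one further commutation to kill the extra coordinate, or simply cite that computation as the paper does.
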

\begin{proof}
    Note that the twisted Chebyshev polynomials are all PCF, so they cannot satisfy (1), and they're not unicritical except when \(d=2\), in which case the resulting group is pro-\(2\) anyway. When \(f\) is conjugate to a powering map, the iterated monodromy group is procyclic and any degree is possible.

    Now we proceed to verify 
    \begin{enumerate}
        \item Let \(c\) be a critical point with infinite orbit. For sufficiently large \(n\), the point \(b=f^n(c)\) will only have one preimage which is also a branch point, and hence satisfies the assumptions of Corollary~\ref{corollary: open frattini implies pro-p}.
        
        \item If \(f\) is unicritical and the branch locus has at least \(3\) elements, there must be at least one which has only single preimage in the branch locus.
        
        So suppose the branch locus has two elements. In the periodic case, the inertia generator over the critical point satisfies Corollary~\ref{corollary: open frattini implies pro-p}. In the pre-periodic case, the corollary will not apply. Either \(d=2\) and we are in the Chebyshev case that was already handled, or \(d>2\) and we will appeal directly to Proposition~\ref{proposition: nilp to pro p criterion}. The case \(n=2,d>2\) is case (C) of~\cite{AH:unicritical}, and it was shown that
        \[([b_m, [b_m, b_n]],1,...,1) \in \bArb.\]
        While~\cite{AH:unicritical} works with a model of \(\bArb\) that does not change the standard coordinate, one can make a change of paths so that it does. Changing paths only alters the coordinates by conjugation from \(\bIMG\) and permutation, and therefore we will still have an element supported in a single coordinate, to which the proposition can be applied.
    \end{enumerate}
\end{proof}

\subsection{Frattini Specialization}\label{section: specialization}
One of the key insights in~\cite{BGJT:specializations} is that an open Frattini subgroup should allow one to show that the arboreal representation is surjective (in an appropriate sense) over hilbertian fields by reducing to the finite case, in the spirit of Odoni's original questions about these representations. More precisely, they show that if the rational map over a number field is post-critically finite and its \emph{arithmetic} pfIMG is pro-\(p\), then Odoni's conjecture is true: there are infinitely many specializations of the base point with surjective arboreal representation. As we saw in the previous section, open Frattini implies pro-\(p\) for many polynomials, which would allow one to reduce to~\cite{BGJT:specializations}. Nevertheless, the non-\(2\)-power degree twisted Chebyshev case furnish a polynomial examples of an open Frattini subgroup of a geometric pfIMG which is not pro-\(p\), nor even pronilpotent. It seems plausible that rational functions might furnish more exotic examples.  

We will prove that over number fields that if the \emph{geometric} pfIMG has an open Frattini subgroup, then Odoni's conjecture is true. Assumptions about the geometric pfIMG are milder, and in principle easier to verify: we point out that, for PCF rational functions, a pro-\(p\) or even pronilpotent geometric pfIMG implies that the geometric pfIMG has an open Frattini subgroup. For polynomials, we are able to prove the result over all hilbertian fields, further improving the known case from~\cite{BGJT:specializations}. The approach suggests a path toward a specialization result for rational functions over general hilbertian fields.

Our notion of specialization incorporates base change. One could expand and replace \(K\) ahead of time. However, the arithmetic pfIMG is sensitive to base change, so we find it useful to include it in our analysis, and it allows us to articulate where \(K\) is assumed to be a number field, rather than merely hilbertian.
\begin{definition}
    Given \(a\in \bar K\), and \(L\) a subfield of \(\bar K\) containing \(K(a)\) We say that \(\Gal(L(f^{-\infty}(a))/L)\) is a \emph{specialization at \(t=a\) over \(L\)} of the profinite iterated monodromy group. If \(a\) is not post-critical, then \(\Gal(L(f^{-\infty}(a))/L)\) can essentially be identified with a subgroup of \(\IMG\): base change to \(L\) restricts to a subgroup, and so this Galois group can be identified with a decomposition subgroup over \(t-a\), all of which are conjugate. We say that a specialization is surjective if this subgroup is all of \(\IMG\). Typically, one is interested in the case \(a\in K\) so that \(K(a) = K\).
\end{definition}

Then some of main results of~\cite{BGJT:specializations} can be summarized as follows:
\begin{theorem}[\cite{BGJT:specializations}]\label{theorem: BGJT main}
    Let \(f\) be a rational function over \(K\) of degree \(d\), post-critically finite and tamely ramified. Let \(a\) be any element of \(K\) which is not post-critical.

\begin{enumerate}
    \item If \(\Arb\) is a \(p\)-group, then there is an integer \(m\) such that a specialization of the profinite iterated monodromy group at \(t=a\) over \(K\) is surjective if 
    \[\Gal(\bar K(f^{-m}(t))/\bar K(t)) \cong \Gal(\wh K_f(f^{-m}(a))/\wh K_f).\]
    \item If \(K\) is a number field, there are infinitely many \(a\in K\) such that the specialization is surjective.
    \item If \(K\) is a number field and \(f\) is unicritical of prime-power degree, one may take \(m\) to be the size of the critical orbit.
\end{enumerate}
\end{theorem}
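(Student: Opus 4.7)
The plan is to handle the three items in turn, with (1) supplying a Frattini reduction from the infinite tree to a finite truncation, and (2),(3) converting that finite-level statement into infinitely many good specializations.

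For (1), I would argue via the Frattini quotient of a finitely generated pro-\(p\) group. Since \(f\) is PCF and tamely ramified, \(\bArb\) is topologically generated by inertia generators over the finite post-critical locus, and the same is true of \(\Arb\) once one adjoins a finite piece from \(\Gamma_K\); in particular the pro-\(p\) group \(\Arb\) is topologically finitely generated, so its Frattini subgroup \(\Phi\) is open. Pick \(m\) large enough that the level-\(m\) stabilizer \(\Arb_{(m)}\) lies inside \(\Phi\). For any closed subgroup \(H\subseteq\Arb\), the Frattini property then gives \(H = \Arb\) iff \(H\Arb_{(m)}=\Arb\), i.e.\ iff \(H\) surjects onto the level-\(m\) quotient \(\Gal(K(f^{-m}(t))/K(t))\). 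Applied to the decomposition subgroup \(H=\Gal(K(f^{-\infty}(a))/K)\) at \(t=a\), surjectivity is thus a finite-level condition. To separate the arithmetic from the geometric piece, I would base-change to the constant field extension \(\wh K_f\): over \(\wh K_f\) the arithmetic and geometric pfIMGs agree, and the displayed isomorphism in (1) is precisely the statement that the level-\(m\) specialization over \(\wh K_f\) hits the full geometric level-\(m\) monodromy. Combining with Frattini and with the fact that \(\wh K_f/K\) is already contained in the arithmetic tower then upgrades this to surjectivity of the full arithmetic specialization.

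For (2), I would apply Hilbert's Irreducibility Theorem to the number field \(K\). The condition in (1) is a finite-level Galois condition on \(a\), cut out by finitely many thin sets: we need \(a\) to (i) avoid the post-critical locus, (ii) give the full geometric Galois group at level \(m\), and (iii) have the correct behavior with respect to \(\wh K_f\) (which is itself a small finite extension of \(K\) when \(f\) is PCF, e.g.\ via \cite[Proposition 19.2.1]{FJ:fieldarithmetic}). Hilbertianity of \(K\) as a number field yields infinitely many \(a\in K\) outside all the excluded thin sets, and for each such \(a\) the hypothesis of (1) is verified, giving a surjective specialization.

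For (3), in the unicritical prime-power degree case the explicit wreath-recursion description of \(\bArb\) from~\cite{AH:unicritical} (and its pro-\(p\) refinement) lets one identify \(\Phi\) with the level-\(m\) stabilizer where \(m\) is exactly the length of the critical orbit, since by that stage all inertia generators have been ``unwound'' enough that further coordinates lie in the commutator-plus-\(p\)-power subgroup. Plugging this explicit \(m\) into (1) gives the sharpened statement.

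The main obstacle is item (1): carefully certifying that the Frattini quotient of \(\Arb\) is captured at a finite level \(m\) and that the correct intertwining of the arithmetic and geometric towers is controlled by the constant field extension \(\wh K_f\). Once this finite-level reduction is in hand, both (2) and (3) are essentially applications of HIT and of known explicit monodromy computations.
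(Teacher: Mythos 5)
First, a structural point: the paper does not actually prove Theorem~\ref{theorem: BGJT main} --- it is quoted from~\cite{BGJT:specializations} as imported input, so there is no in-paper proof to compare against. That said, your outline is essentially the strategy the paper itself deploys for its \emph{generalization} of parts (1) and (2) (Proposition~\ref{proposition: five lemma decomposition}, Theorem~\ref{theorem: frattini specialization}, Corollary~\ref{corollary: hilbertian constant field specialization}, and the descent corollary), and it is the right strategy. Two places where your write-up needs tightening. First, you run the Frattini argument on the arithmetic group \(\Arb\), whose topological finite generation is not free: the geometric group \(\bArb\) is finitely generated by tame inertia over the finite post-critical locus, but the quotient \(\Gal(\wh K_f/K)\) need not be finitely generated without further input (over a number field it is, via finite ramification of \(\wh K_f\), cf.\ Lemma~\ref{lemma: tucker fix}). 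The cleaner route, and the one the displayed condition in (1) is calibrated for, is to apply Frattini only to \(\bArb\) (pro-\(p\) and finitely generated, hence open Frattini, hence containing some level stabilizer \(\bArb_{(m)}\)), deduce \(\bar D = \bIMG\) from the level-\(m\) isomorphism over \(\wh K_f\), and then obtain surjectivity of the full specialization over \(K\) from the four lemma applied to the decomposition-group diagram, where \(D/\bar D \to \IMG/\bIMG\) is automatically surjective for \(K\)-rational non-post-critical \(a\). Second, in (2) you call \(\wh K_f\) a ``small finite extension'': it is small but in general infinite, so descending Hilbert subsets from \(\wh K_f\) to \(K\) requires the smallness machinery of \cite[Chapter 19]{FJ:fieldarithmetic} (finite ramification over a number field), not finiteness. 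With those repairs your sketch of (1) and (2) is sound; (3) indeed rests on the explicit identification of the Frattini subgroup with the level-\(m\) stabilizer in the unicritical pro-\(p\) computations, which you correctly defer to the literature.
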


We will generalize (1) and (2), and comment at the end on (3). In the polynomial case, we benefit from the control over the constant field extensions obtained in~\cite{AH:unicritical}. 

To start, we remind the reader of the relevant facts from algebraic number theory, primarily that decomposition subgroups are compatible with restriction. Recall that \(\bIMG = \Gal(\bar K(f^{-\infty}(t))/\bar K(t)\) is naturally isomorphic, by restriction, to \(\Gal(\wh K_f(f^{-\infty}(t))/\wh K_f(t)\).
\begin{proposition}\label{proposition: five lemma decomposition}
    Let \(a\in \bar K\) and assume \(a\) is not post-critical, and let \(L\) be a subfield of \(\bar K\) containing \(K(a)\). Let \(\mcO\) be the ring of integers of \(L(f^{-\infty}(t))\) and choose a maximal ideal \(\alpha\) of \(\mcO\) lying over \(t-a\). Let \(D=D_{\alpha/a}\) be the decomposition subgroup, the set of \(\sigma \in \Gal(L(f^{-\infty}(t))/L(t)) \subseteq \Gal(K(f^{-\infty}(t))/K(t)) = \IMG\) such that \(D^\sigma = D\). Let \(\bar D\) be its geometric counterpart, the set of \(\sigma \in D\) which fix \(\bar K\), or equivalently fix \(L\wh K_f\).
    
    Under the inclusion \(\bIMG \subseteq \IMG\), we have \(\bar D = D\cap \bIMG\). These groups fit into a commutative diagram, where the vertical diagrams are the natural inclusions and the rows are exact:
    \[\begin{tikzcd}
    	0 & {\bar D} & D & {D/\bar D} & 0 \\
    	0 & \bIMG & \IMG & {\IMG/\bIMG} & 0
    	\arrow[from=1-1, to=1-2]
    	\arrow[from=1-2, to=1-3]
    	\arrow[from=1-2, to=2-2]
    	\arrow[from=1-3, to=1-4]
    	\arrow[from=1-3, to=2-3]
    	\arrow[from=1-4, to=1-5]
    	\arrow[from=1-4, to=2-4]
    	\arrow[from=2-1, to=2-2]
    	\arrow[from=2-2, to=2-3]
    	\arrow[from=2-3, to=2-4]
    	\arrow[from=2-4, to=2-5]
    \end{tikzcd}\]

    Observe \(D/\bar D \cong \Gal(L\wh K_f / L)\) and \(\IMG/\bIMG \cong \Gal(\wh K_f/K)\); all the vertical arrows are induced by inclusions.
    
    Then \(D\to\IMG\) is surjective if and only if \(\bar D \to \bIMG\) and \(D/\bar D\to \IMG/\bIMG\) are surjective. The latter is surjective if and only if \(L\) and \(\wh K_f\) are linearly disjoint over \(K\): the index of \(D/\bar D\) in \(\IMG/\bIMG\) is the degree of \(\wh K_f \cap L\) over \(K\).
    
    In particular, if \(a\in K\) and \(L=K\), the specialization is surjective if and only if \(\bar D\to \bIMG\) is surjective.
\end{proposition}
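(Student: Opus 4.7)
The proof is essentially a Galois-theoretic diagram chase. First, I would verify $\bar D = D \cap \bIMG$ directly from the definitions: $\sigma \in D$ lies in $\bar D$ iff $\sigma|_{\bar K} = \id$, which is exactly the defining condition for $\sigma \in \bIMG$. The bottom row of the diagram is the standard constant-field short exact sequence for the profinite iterated monodromy group, which holds because $\wh K_f$ is by definition the algebraic closure of $K$ in $K(f^{-\infty}(t))$, so restriction to $\wh K_f$ identifies $\IMG/\bIMG$ with $\Gal(\wh K_f/K)$. The top row is tautological once $\bar D = D \cap \bIMG$ is known, and the three vertical arrows are all inclusions: on the right, the second isomorphism theorem gives $D/\bar D = D/(D\cap \bIMG) \hookrightarrow \IMG/\bIMG$.

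Next I would identify $D/\bar D$ with $\Gal(L\wh K_f/L)$. Since $a$ is not post-critical, the Galois cover $L(f^{-\infty}(t))/L(t)$ is unramified at $t-a$, so by standard local theory $D$ is isomorphic to the Galois group of the residue extension at $\alpha/a$. Elements of $\wh K_f$ are algebraic over $K$ and hence units at every prime over $t-a$, so the residue map embeds $L\wh K_f$ as a Galois subextension of this residue field. Restriction of $D$ to the subextension then surjects onto $\Gal(L\wh K_f/L)$, and its kernel is precisely the $\sigma \in D$ fixing $L\wh K_f$, which equals $\bar D$ by the equivalent characterization in the proposition.

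The surjectivity claim then follows from the tower formula of indices: combining $\bar D \subseteq D \subseteq \IMG$ with $\bar D = D \cap \bIMG$ and the second isomorphism theorem yields
\[[\IMG : D] \;=\; [\bIMG : \bar D] \cdot [\IMG/\bIMG : D/\bar D],\]
so $D = \IMG$ iff both factors equal $1$, which is exactly the desired equivalence. Applying Galois correspondence, $\Gal(L\wh K_f/L) \cong \Gal(\wh K_f/(L\cap \wh K_f))$, so the second factor equals $[L\cap \wh K_f : K]$, which is $1$ exactly when $L$ and $\wh K_f$ are linearly disjoint over $K$ (using that $\wh K_f/K$ is Galois). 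The special case $a\in K$, $L=K$ then falls out because $L\cap \wh K_f = K$ is automatic. The main obstacle is the identification $D/\bar D \cong \Gal(L\wh K_f/L)$, which requires the classical correspondence between decomposition at unramified primes and the residue Galois group; the remainder is Galois-theoretic bookkeeping on the tower of subgroups.
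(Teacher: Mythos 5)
Your proof is correct and follows essentially the same route as the paper: identify $D/\bar D$ with $\Gal(L\wh K_f/L)$ via the residue field at $\alpha$ (using that $a$ is not post-critical), then deduce the surjectivity equivalence from the diagram. The only cosmetic difference is that you close with the multiplicativity of indices $[\IMG:D]=[\bIMG:\bar D]\cdot[\IMG/\bIMG:D/\bar D]$ where the paper invokes the four lemma; these are interchangeable.
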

\begin{proof}
    Recall the definition of a decomposition subgroup: let \(\mcO_\infty\) be the ring of integers of \(L(f^{-\infty}(t))\) and \(\alpha\) a maximal ideal of \(\mcO_\infty\) containing \(t-a\), then \(D_{\alpha/a}\) is the set of \(\sigma \in \Gal(L(f^{-\infty}(t)/L(t)) = \IMG\) such that \(D_{\alpha/a}^\sigma = D_{\alpha/a}\). These are the elements of the iterated monodromy group which can be reduced modulo \(\alpha\). The quotient \(\mcO_\infty/\alpha\) is a galois extension of \(L\) with galois group isomorphic to \(D\); the infinite case is reduced to the more usual finite case by identifying \(\alpha\) with the sequence of primes of \(L(f^{-n}(t))\) below it and passing to the limit.

    Now simply observe that \(L\) is algebraic over \(K\), so the constant field extension in \(L(f^{-\infty}(t))\) is the compositum of \(L\) and the constant field extension in \(K(f^{-\infty}(t))\). Therefore, \(\mcO_\infty\) contains the constant field extension \(L\wh K_f\) and so the quotient \(\mcO_\infty/\alpha\) does as well. Since \(L\wh K_f\) is a subfield of \(\mcO_\infty/\alpha\), galois theory ensures \(D_{\alpha/a}\) surjects onto \(\Gal(L\wh K_f/L)\). This coincides with the restriction from \(D_{\alpha/a}\) to \(\Gal(L\wh K_f/L)\) induced by the map to the quotient \(\Arb/\bArb\), so the kernel is precisely those \(\sigma \in D_{\alpha/a}\) that fix \(L\wh K_f\). The latter is the algebraic closure of \(L\) in \(L(f^{-\infty}(t))/L(t)\), so they are in the image of \(\bArb\).

    The surjectivity criterion is the four lemma.
\end{proof}

This allows us to use the Frattini subgroup of \(\bArb\), rather than \(\Arb\), to give a finite-level criterion for surjectivity.

\begin{theorem}\label{theorem: frattini specialization}
    Let \(f\) be a tamely ramified rational map over \(K\). Let \(a\in \bar K\) and \(L\) a subfield of \(\bar K\) containing \(K(a)\).
    
    If the Frattini subgroup of \(\bIMG\) is open, then there is an integer \(m\) such that the specialization of the profinite iterated monodromy group at \(t=a\) over \(L\) is surjective if and only if 
    \[\Gal(L\wh K_f(f^{-m}(a))/\wh K_f(a))\cong \Gal(\wh K_f(f^{-m}(t))/\wh K(t))\]
    and \(L\) is linearly disjoint from \(\wh K_f\) over \(K\). If \(a\in K\) and \(L=K\), the disjointness condition is vacuous.
\end{theorem}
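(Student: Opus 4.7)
The plan is to combine Proposition~\ref{proposition: five lemma decomposition}, which breaks the surjectivity of $D\to\IMG$ into a geometric piece (\(\bar D\to\bIMG\) surjective) and an arithmetic piece (\(D/\bar D \to \IMG/\bIMG\) surjective), with a Frattini-type reduction that converts the geometric piece into a finite-level condition. The arithmetic piece is already identified in Proposition~\ref{proposition: five lemma decomposition} with the linear disjointness of \(L\) and \(\wh K_f\) over \(K\), which is precisely the disjointness condition in the theorem. So the bulk of the work is handling the geometric piece.

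For that, I would invoke the classical Frattini argument for profinite groups: if \(G\) is profinite and \(H\) is a closed subgroup with \(H\cdot \Phi(G) = G\), then \(H = G\). Indeed, a proper closed \(H\) lies in some proper open (necessarily finite-index) subgroup, hence in some maximal open subgroup \(M\), and then \(H\Phi(G) \subseteq M\Phi(G) = M \neq G\). Apply this with \(G = \bIMG\) and \(H = \bar D\). Since the level stabilizers \(\bIMG_{(m)}\) form a basis of neighborhoods of the identity and \(\bar\Phi := \Phi(\bIMG)\) is open by hypothesis, there is an integer \(m\) with \(\bIMG_{(m)} \subseteq \bar\Phi\). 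Then \(\bar D = \bIMG\) is equivalent to \(\bar D \cdot \bIMG_{(m)} = \bIMG\), a condition that only involves the finite quotient \(\bIMG/\bIMG_{(m)} \cong \Gal(\wh K_f(f^{-m}(t))/\wh K_f(t))\).

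To translate this into the displayed Galois group isomorphism, I would identify the image of \(\bar D\) in \(\bIMG/\bIMG_{(m)}\) with the decomposition group at a prime above \(t - a\) in the finite, tamely ramified extension \(L\wh K_f(f^{-m}(t))/L\wh K_f(t)\). Because \(a\) is not post-critical, this extension is unramified at \(t-a\), so its decomposition group equals the Galois group of the residue field extension, which unwinds to \(\Gal(L\wh K_f(f^{-m}(a))/L\wh K_f)\). Requiring this image to exhaust \(\bIMG/\bIMG_{(m)}\) is exactly the finite-level isomorphism asserted in the theorem. Assembling the geometric conclusion with the arithmetic one from Proposition~\ref{proposition: five lemma decomposition} then yields the ``if and only if''.

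The hard part is not conceptual but organizational: one must carefully track over which base field each Galois group is formed (\(K\), \(L\), \(\wh K_f\), or \(L\wh K_f\)) and match the image of \(\bar D\) in the appropriate finite quotient with the specialization Galois group appearing on the right-hand side of the isomorphism. Once this bookkeeping is in place, no further ideas are needed: the argument is precisely the finite-level reduction used in~\cite{BGJT:specializations}, applied here to the geometric pfIMG rather than the arithmetic one.
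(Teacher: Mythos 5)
Your proposal is correct and follows essentially the same route as the paper: reduce surjectivity to the geometric piece via Proposition~\ref{proposition: five lemma decomposition}, use openness of the Frattini subgroup to find a level stabilizer \(\bIMG_{(m)}\) inside it so that \(\bar D\,\bIMG_{(m)}=\bIMG\) forces \(\bar D=\bIMG\), and identify \(\bar D\,\bIMG_{(m)}/\bIMG_{(m)}\) with the finite specialization Galois group. No gaps; the bookkeeping you flag is exactly what the paper's proof carries out.
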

\begin{proof}
    Every open subgroup of \(\bIMG\) contains a level stabilizer \(\bIMG_{(m)}\) for some sufficiently large \(m\). Therefore, any closed subgroup \(H\) of \(\bIMG\) such that \(H\bIMG_{(m)} = \bIMG\) also satisfies \(H\Phi = \bIMG\), which implies \(H = \bIMG\) because \(\Phi\) is the Frattini subgroup.

    Note that \(\Gal(L\wh K_f(f^{-m}(a))/ L\wh K_f)\) is precisely \(\bar D \bIMG_{(m)}/\bIMG_{(m)}\), and that it is naturally included in \(\bIMG/\bIMG_{(m)}\), which coincides with \(\Gal(\bar (f^{-m}(t))/\wh K_f(t))\). Both quotients are finite, so they are isomorphic if and only if \(\bar D \bIMG_{(m)} = \bIMG\), if and only if \(\bar D = \bIMG\) by Frattini. Since we have assumed linear disjointness of \(L\) and \(\wh K_f\) over \(K\), it follows from Proposition~\ref{proposition: five lemma decomposition} that this is equivalent to surjectivity of the specialization.
\end{proof}
\begin{corollary}\label{corollary: hilbertian constant field specialization}
    If \(\wh K_f\) is hilbertian, then there is a hilbert subset of \(a\in\wh K_f\) such that the specialization at \(t=a\) over \(\wh K_f\) is surjective.
\end{corollary}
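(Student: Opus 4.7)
The plan is to derive this as a direct combination of Theorem~\ref{theorem: frattini specialization} with the hilbertianity of \(\wh K_f\). First, note that the Frattini-openness hypothesis of Theorem~\ref{theorem: frattini specialization} is inherited from the standing context of this subsection; moreover, it is a property of \(\bIMG\) alone and is thus preserved when we replace the ground field \(K\) by \(\wh K_f\). Under this base change, the constant field extension of \(f\) becomes trivial, since \(\wh K_f\) is already the maximal algebraic subfield inside the infinite tower, so the arithmetic and geometric pfIMGs now coincide. Consequently, when we take \(L = \wh K_f\), the linear disjointness condition in the theorem is vacuous, and ``surjective'' means surjective onto \(\bIMG\).

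Applying Theorem~\ref{theorem: frattini specialization} in this base-changed setting produces an integer \(m\), depending only on the Frattini subgroup of \(\bIMG\), such that for any \(a \in \wh K_f\), the specialization of the pfIMG at \(t=a\) over \(\wh K_f\) is surjective onto \(\bIMG\) if and only if
\[\Gal\bigl(\wh K_f(f^{-m}(a))/\wh K_f\bigr) \cong \Gal\bigl(\wh K_f(f^{-m}(t))/\wh K_f(t)\bigr).\]
The right-hand side is a fixed finite Galois group, namely that of a finite Galois extension of the function field \(\wh K_f(t)\). Since \(\wh K_f\) is hilbertian by hypothesis, classical Hilbert irreducibility applied to this extension yields a hilbert subset \(H \subseteq \wh K_f\) on which the displayed isomorphism holds, and for every \(a \in H\) the infinite specialization is therefore surjective.

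There is no substantial obstacle here; the bulk of the work has already been absorbed into Theorem~\ref{theorem: frattini specialization}, whose open Frattini hypothesis is precisely what collapses infinite surjectivity to a finite-level matching of Galois groups. The only wrinkle to check is that the theorem applies cleanly after base change to \(\wh K_f\) — which reduces to the observation that the constant field extension, and with it the disjointness obstruction, become trivial over this larger ground field.
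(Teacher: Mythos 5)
Your proposal is correct and follows essentially the same route as the paper: take \(L = \wh K_f\) in Theorem~\ref{theorem: frattini specialization}, observe that over \(\wh K_f\) surjectivity reduces to the finite-level isomorphism of Galois groups, and invoke hilbertianity of \(\wh K_f\) to produce the hilbert subset. Your added remarks on why the linear-disjointness condition and the arithmetic/geometric distinction collapse after base change to \(\wh K_f\) are a reasonable elaboration of what the paper leaves implicit, not a different argument.
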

\begin{proof}
    Taking \(L=\wh K_f\) in the theorem, surjectivity over \(\wh K_f\) only requires that
    \[\Gal(\wh K_f(f^{-m}(a))/\wh K_f(a)) \cong \Gal(\wh K_f(f^{-m}(t))/\wh K_f(t)),\]
    meaning a finite Galois extension of \(\wh K_f(t)\) is preserved by specialization, which is precisely what hilbertianity ensures.
\end{proof}
\begin{remark}
    
\end{remark}

Notice that when \(\wh K_f/K\) is finite, \(\wh K_f\) is hilbertian if \(K\) is hilbertian, in which case the hilbert subsets of \(\wh K_f\) contain hilbert subsets of \(K\), which allows one to reduce to hilbertianity of \(K\) and descend to \(K\). While \(\wh K_f/K\) is not finite in general, a milder group-theoretic property is still sufficient to carry out this argument.
\begin{definition}
    A galois extension \(L/K\) is said to be \emph{small} if \(\Gal(L/K)\) has only finitely many subgroups of index \(n\) for each integer \(n\).
\end{definition}

The following facts are proven in \cite[Chapter 19]{FJ:fieldarithmetic}.
\begin{proposition}\label{proposition: small implies descent}
    If \(L/K\) is small, then \(L\) is hilbertian and every hilbert subset of \(L\) contains a hilbert subset of \(K\). Two sufficient conditions for smallness are:
    \begin{enumalpha}
        \item If \(\Gal(L/K)\) is finitely generated, it is small.
        \item If \(K\) is a number field and \(L\) is ramified over only finitely many primes of \(K\), then \(\Gal(L/K)\) is small.
    \end{enumalpha}
\end{proposition}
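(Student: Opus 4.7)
The plan is to invoke the standard results from~\cite[Chapter 19]{FJ:fieldarithmetic} essentially verbatim, so the proof is primarily a matter of assembling the right references rather than producing new mathematics. The main statement (smallness implies hilbertianity and descent of hilbert subsets) is the content of the ``twisted wreath product'' machinery in that chapter; concretely, one shows that if $L/K$ is small and $f(T,Y) \in L[T,Y]$ is an absolutely irreducible polynomial, one can construct a single polynomial $F(T,Y)$ over $K$ whose splitting behavior over $K(t)$ encodes the splitting behavior of a sufficiently large family of $K$-conjugates of $f$. Smallness is exactly the hypothesis needed so that this family is finite and hence $F$ has finitely many irreducible factors, permitting one to transfer a hilbert set for $F$ over $K$ into a hilbert set for $f$ over $L$.

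For sufficient condition (a), I would cite the well-known fact that a finitely generated profinite group has only finitely many open subgroups of each index $n$ (every such subgroup contains the normal subgroup generated by $n$-th powers and commutators, which itself is open of finite index when the group is finitely generated; alternatively, this is a direct consequence of the Nikolov--Segal theorem, but the weaker classical statement already suffices here). The subgroups of index $n$ in $\Gal(L/K)$ correspond to subextensions of degree $n$ of $L/K$, so finite generation yields smallness.

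For sufficient condition (b), the key input is the Hermite--Minkowski theorem: over a number field, there are only finitely many extensions of given bounded degree unramified outside a fixed finite set of primes. Any subextension $M/K$ of $L/K$ of degree $n$ is ramified only at primes where $L/K$ is ramified, hence lies in a finite set, so there are only finitely many possibilities for $M$, and hence only finitely many subgroups of index $n$ in $\Gal(L/K)$.

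The only step that requires genuine thought (as opposed to citation) is the first assertion. Since the statement is exactly~\cite[Proposition 19.2.1 and Lemma 19.2.3]{FJ:fieldarithmetic} (up to the precise labeling in that edition), I would write the proof as a brief guided reference to those results, with the observations above for (a) and (b) occupying the bulk of the written proof. The main obstacle would merely be bookkeeping: ensuring the version of ``small'' used in~\cite{FJ:fieldarithmetic} matches the definition used here (it does: both require finitely many open subgroups of each finite index).
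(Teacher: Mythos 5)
Your overall approach matches the paper exactly: the paper offers no proof at all, just the sentence ``The following facts are proven in \cite[Chapter 19]{FJ:fieldarithmetic}'' immediately before the statement, so a guided citation is precisely what is intended, and your sketches of the descent mechanism and of (b) via Hermite--Minkowski are the standard arguments.

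One concrete error, though: your parenthetical justification of (a) is false. It is not true that every subgroup of index \(n\) in a profinite group \(G\) contains the closed normal subgroup generated by \(n\)-th powers and commutators; that subgroup is the kernel of the maximal abelian exponent-\(n\) quotient, and an index-\(n\) subgroup need not even be normal, nor need a normal one have abelian quotient (e.g.\ a perfect finite group \(G\) has \([G,G]G^n = G\) but plenty of proper subgroups of small index). The correct classical argument for (a) is via the coset action: an open subgroup \(H\) of index \(n\) determines a continuous transitive action of \(G\) on \(G/H\), i.e.\ a continuous homomorphism \(G\to S_n\) together with a distinguished point stabilizer; a continuous homomorphism from a topologically \(d\)-generated group to \(S_n\) is determined by the images of the \(d\) generators, so there are at most \((n!)^d\) such homomorphisms and hence finitely many subgroups of index \(n\). (Equivalently: pass to the normal core, which has index dividing \(n!\), and note that a finitely generated profinite group has only finitely many open normal subgroups of index at most \(n!\).) Since this fact is what you need for Proposition~\ref{proposition: polynomial has small constant field}, the justification should be repaired or replaced by the direct citation to \cite[Chapter 19]{FJ:fieldarithmetic}, where it appears as the statement that finitely generated profinite groups are small.
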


In~\cite{BGJT:specializations}, the arithmetic pfIMG is pro-\(p\), so both the constant field Galois group and the geometric pfIMG are also pro-\(p\). For pro-\(p\) groups, smallness, open Frattini, and being finitely generated are all equivalent. More generally, those three are equivalent for a pronilpotent group whose order is divisible by only finitely many primes. 

What amounts to smallness of the constant field over number fields appears in~\cite[Theorem 2.5]{BGJT:specializations}. The main ingredient is the following, which follows from the Chevalley-Weil theorem. We reproduce an abbreviated version of this argument below, for comparison with our argument for polynomials over general hibertian fields:
\begin{lemma}[{\cite[Lemma 2.3]{BGJT:specializations}}]\label{lemma: tucker fix}
    Let \(f\) be a post-critically finite rational function over a number field \(K\). The constant field extension \(\wh K_f\) is ramified over finitely many primes.
\end{lemma}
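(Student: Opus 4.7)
The plan is to apply the Chevalley--Weil theorem to the covers $X_n \to \PP^1_K$ associated to $K(f^{-n}(t))/K(t)$, exploiting post-critical finiteness to produce a single finite set of bad primes uniform in $n$.

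First, let $P$ denote the post-critical set of $f$, which is finite because $f$ is PCF. A branch value of $f^n$ has the form $f^j(c)$ for some critical point $c$ of $f$ and some $1 \le j \le n$, so the branch locus of every $f^n$ lies in the same fixed finite set $P$. Letting $X_n$ be the normalization of $\PP^1_K$ in $K(f^{-n}(t))$, this means the cover $\phi_n : X_n \to \PP^1_K$ is étale over $\PP^1_K \setminus P$ for every $n$.

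Next, I would fix an integral model of $f$ over $\mcO_{K,S_0}$ for some finite $S_0$, chosen large enough that the reduction of $f$ modulo any $\mathfrak{p} \notin S_0$ is again separable of degree $d$ and tame, and that the points of $P$ extend to pairwise-disjoint sections of $\PP^1_{\mcO_{K,S_0}}$. The key feature of PCF is that both conditions need to hold only for the single map $f$ and the single finite set $P$, rather than for infinitely many growing data, so one finite $S_0$ works for every iterate $f^n$ simultaneously. Now pick a non-post-critical $a \in K$ and enlarge $S_0$ to $S$ by adjoining the finitely many primes at which the reduction of $a$ collides with the reduction of $P$. For any $\mathfrak{p} \notin S$, the section $a$ lies in the étale locus of $X_n \to \PP^1_{\mcO_{K,S}}$, and Chevalley--Weil gives that $K(f^{-n}(a))/K$ is unramified at $\mathfrak{p}$.

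Finally, let $K_n$ be the algebraic closure of $K$ in $K(f^{-n}(t))$. Because $K_n$ consists of constants, the specialization map $K(f^{-n}(t)) \to K(f^{-n}(a))$ is injective on $K_n$, giving an embedding $K_n \hookrightarrow K(f^{-n}(a))$. Hence $K_n / K$ is unramified outside $S$ for every $n$, and passing to the union yields that $\wh K_f = \bigcup_n K_n$ is ramified only over the finite set $S$. The main technical point is verifying that a single finite $S$ controls all $n$ at once; this is exactly where PCF is essential, since it pins the branch locus of every $f^n$ inside the fixed $P$, so no new primes need be excluded as $n$ grows.
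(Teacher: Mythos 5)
Your argument is correct and is essentially the paper's: both proofs specialize at a non-post-critical \(a\in K\), observe that the constant field embeds into \(K(f^{-n}(a))\), and use Chevalley--Weil together with the fact that PCF pins the branch locus of every iterate inside a single finite set to get a uniform finite set of ramified primes. The only difference is that the paper outsources the Chevalley--Weil step to \cite[Theorem 8]{BIJJLMRSS:pcf} (finite ramification of \(K(f^{-\infty}(a))\) for PCF maps), whereas you unfold that cited argument explicitly.
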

\begin{proof}
    We can always find an \(a \in K\) which is not in the critical orbit, in which case \(K(f^{-\infty}(a))\) contains \(\wh K_f\) by Proposition~\ref{proposition: five lemma decomposition}. By~\cite[Theorem 8]{BIJJLMRSS:pcf}, the extension \(K(f^{-\infty}(a))\) is ramified over finitely many primes for any choice of \(a\), so \(\wh K_f\) is only ramified over a finite set of primes. Proposition~\ref{proposition: small implies descent}(b) furnishes smallness.
\end{proof}
\begin{proposition}
    If \(f\) is a PCF rational function over a number field \(K\), then \(\wh K_f\) is small over \(K\).
\end{proposition}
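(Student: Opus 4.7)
The plan is to combine the two results that immediately precede the statement. By Lemma~\ref{lemma: tucker fix}, since $f$ is PCF over a number field $K$, the constant field extension $\wh K_f / K$ is ramified over only finitely many primes of $K$. Since $K$ is a number field, condition (b) of Proposition~\ref{proposition: small implies descent} then applies directly, yielding that $\Gal(\wh K_f / K)$ is small, which is by definition what it means for $\wh K_f$ to be small over $K$.

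So the proof reduces to citing these two facts in sequence. The only thing worth remarking is that both hypotheses of Proposition~\ref{proposition: small implies descent}(b) are indeed satisfied: $K$ is a number field by assumption, and the PCF hypothesis on $f$ (combined with tame ramification, which is standing throughout the section) is precisely what feeds into Lemma~\ref{lemma: tucker fix} to deliver finite ramification. No new ingredient is required, and there is no real obstacle; the statement is essentially a packaging of the preceding lemma and proposition into the form in which it will be used in the specialization results that follow.
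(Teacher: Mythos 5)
Your proof is correct and matches the paper's own argument exactly: both cite Lemma~\ref{lemma: tucker fix} to get finite ramification of \(\wh K_f/K\) and then apply Proposition~\ref{proposition: small implies descent}(b). Nothing further is needed.
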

\begin{proof}
    Proposition~\ref{proposition: small implies descent}(b) is satisfied by Lemma~\ref{lemma: tucker fix}~.
\end{proof}

For polynomials, even post-critically infinite, we can show that \(\wh K_f\) for any hilbertian field by group-theoretic means.
\begin{proposition}\label{proposition: polynomial has small constant field}
    If \(f\) is a polynomial, then \(\wh K_f\) is small over \(K\).
\end{proposition}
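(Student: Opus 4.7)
The plan is to verify criterion (a) of Proposition~\ref{proposition: small implies descent}: I would show that $\Gal(\wh K_f/K)$ is topologically finitely generated, from which smallness follows. This is the natural path because no assumption is made that $K$ is a number field, so the ramification-based criterion (b) used in the PCF-over-number-field case is unavailable.

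Identifying $\Gal(\wh K_f/K)$ with $\Arb/\bArb$, it is the image of the outer Galois action $\Gamma_K \to \mathrm{Out}(\bArb)$. The plan is to factor this through a topologically finitely generated profinite group assembled from polynomial-specific data. The key ingredients I would exploit: (i) the critical set $C$ of $f$ is finite and $\Gamma_K$ acts on it through a finite quotient; (ii) although the full branch set $\bigcup_{k \geq 0} f^k(C)$ may be infinite in the post-critically infinite case, the $\Gamma_K$-action on it factors through the finite action on $C$, since $f$ being $K$-rational forces $\sigma(f^k(c)) = f^k(\sigma(c))$; (iii) local inertia at each branch point is procyclic of order dividing a power of $d$, so the $\Gamma_K$-action on a choice of primitive inertia generator is governed by the pro-$d$ cyclotomic character into $\ZZ_d^\times$, which is topologically finitely generated; and (iv) the odometer, inertia at the $K$-rational totally ramified fixed point $\infty$, is canonically distinguished with trivial permutation component, as established in~\cite{AH:unicritical}. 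Since $\bArb$ is topologically generated by conjugates of inertia at branch points, the outer action is determined by its effect on pairs (conjugacy class of inertia, cyclotomic twist), and the data of these pairs is governed by the product of a finite permutation group on $C$ and $\ZZ_d^\times$, each topologically finitely generated.

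The main obstacle is the \emph{rigidity step}: arguing that the outer action on $\bArb$ is fully determined by its effect on inertia conjugacy classes and the cyclotomic twist, rather than merely constrained by them. In the profinite fundamental group of $\PP^1_{\bar K}$ minus the branch locus this is classical -- the group is (pro-)generated by inertia subject only to a product relation, and outer automorphisms are determined by their effect on these generating classes. Transferring this to $\bArb$ requires that the description of polynomial pfIMGs from~\cite{AH:unicritical} in terms of critical portrait data continues to express the outer action through the finite-orbit data above, with any additional relations not enlarging the outer automorphism group. I expect this to follow from the iterative and self-similar structure of $\bArb$, which propagates inertia classes consistently across all levels of $f^n$, but the detailed bookkeeping may be intricate.
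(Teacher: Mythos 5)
Your overall strategy is the right one --- verify criterion (a) of Proposition~\ref{proposition: small implies descent} by showing $\Gal(\wh K_f/K)$ is topologically finitely generated --- but the route you propose has a genuine gap at exactly the step you flag, and the ``rigidity'' claim you hope to lean on is false as stated. For a free profinite group on inertia generators (which is what the tame $\pi_1$ of a punctured line is, modulo the product relation), an outer automorphism is \emph{not} determined by its effect on the conjugacy classes of the generators: there are enormous families of distinct outer automorphisms all of which preserve every generating conjugacy class (e.g.\ $x\mapsto x$, $y\mapsto wyw\inv$ for varying $w$ already give infinitely many). The branch cycle lemma gives a necessary constraint on the Galois action, not a determination of it, so your proposed factorization of $\Gal(\wh K_f/K)$ through $(\text{finite permutation group on }C)\times\ZZ_d^\times$ does not follow from (i)--(iv). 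Indeed, if that factorization held for the reasons you give, it would apply verbatim to arbitrary rational maps, whereas the paper has to fall back on a completely different, number-field-specific finite-ramification argument in the non-polynomial PCF case. (A secondary issue: $\Gal(\wh K_f/K)\cong\Arb/\bArb$ maps to $\mathrm{Out}(\bArb)$ but need not inject into it, so finite generation of the image of the outer action would not by itself give finite generation of $\Gal(\wh K_f/K)$.)

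The polynomial-specific ingredient you are missing is that the odometer is \emph{self-centralizing}. The paper's proof is a two-line citation: by the branch cycle argument at the totally ramified $K$-rational fixed point $\infty$ \cite[Proposition 6.1]{AH:unicritical}, one gets $\wh K_f\subseteq K(\zeta_{d^\infty})$, so $\Gal(\wh K_f/K)$ is a quotient of a closed subgroup of $\ZZ_d^*$, hence topologically finitely generated, hence small by criterion (a). The mechanism behind that citation is the one you should have isolated: if $g\in\Arb$ lies over $\sigma\in\Gal(\bar K/K(\zeta_{d^\infty}))$, then $g\odometer g\inv$ is $\bArb$-conjugate to $\odometer^{\chi_d(\sigma)}=\odometer$, so after adjusting by an element of $\bArb$, $g$ centralizes $\odometer$ and therefore lies in $\llangle\odometer\rrangle\subseteq\bArb$. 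This single inertia class at $\infty$ pins down the whole coset, with no need to control the action on the rest of the branch locus or to prove any global rigidity statement.
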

\begin{proof}
    The branch cycle argument~\cite[Proposition 6.1]{AH:unicritical} shows that \(\wh K_f\) is contained in the pro-\(d\) cyclotomic extension of \(K\). Since \(\Gal(K(\zeta_{d^\infty})/K\) can be identified with a closed subgroup of \(\ZZ_d^*\), it is either finite or open, and in either case finitely generated. Therefore, its quotient \(\Gal(\wh K_f/K)\) is finitely generated, and therefore small by Proposition~\ref{proposition: small implies descent}(a).
\end{proof}

We now carry out the descent to \(K\).

\begin{corollary}
    Let \(f\) be a tamely ramified rational map over \(K\) such that \(\bArb\) has an open Frattini subgroup. If \(K\) is hilbertian and \(\Gal(\wh K_f/K)\) is small, then there are infinitely many \(a\in K\) such that the specialization at \(t=a\) over \(K\) is surjective. 

    Smallness obtains if either
    \begin{enumalpha}
        \item \(f\) is a polynomial and \(K\) is hilbertian, or
        \item \(f\) is a PCF rational function and \(K\) is a number field.
    \end{enumalpha}
\end{corollary}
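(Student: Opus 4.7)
The proof is a clean assembly of three earlier pieces: Theorem~\ref{theorem: frattini specialization} (open Frattini reduces surjectivity of specialization to a finite-level condition), Corollary~\ref{corollary: hilbertian constant field specialization} (which packages this finite-level condition as a hilbertianity statement over \(\wh K_f\)), and Proposition~\ref{proposition: small implies descent} (which lets us descend from \(\wh K_f\) to \(K\)). The plan is to run these in order and then verify the two special cases (a) and (b).

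First I would apply Proposition~\ref{proposition: small implies descent} to the assumption that \(\Gal(\wh K_f/K)\) is small: this gives that \(\wh K_f\) is itself a hilbertian field and, crucially, that every hilbert subset of \(\wh K_f\) contains a hilbert subset of \(K\). Next, since \(\bArb\) has an open Frattini subgroup and \(\wh K_f\) is hilbertian, Corollary~\ref{corollary: hilbertian constant field specialization} produces a hilbert subset \(\mcH \subseteq \wh K_f\) such that for every \(a \in \mcH\) the specialization at \(t=a\) \emph{over \(\wh K_f\)} is surjective; by the preceding step, \(\mcH\) contains a hilbert subset of \(K\), which is in particular infinite.

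The one thing that still needs checking is that surjectivity over \(\wh K_f\) at a point \(a \in K\) upgrades to surjectivity over \(K\); this is precisely the ``\(a \in K\) and \(L=K\)'' case of Theorem~\ref{theorem: frattini specialization}, where the linear disjointness hypothesis is vacuous, so that the finite-level isomorphism witnessing surjectivity over \(\wh K_f\) is exactly the criterion for surjectivity over \(K\). Equivalently, in the notation of Proposition~\ref{proposition: five lemma decomposition}, with \(L=K\) we automatically have \(D/\bar D = \IMG/\bIMG\), so surjectivity \(\bar D = \bIMG\) forces \(D = \IMG\).

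Finally, the smallness hypothesis is verified in the two stated cases: in case (a), Proposition~\ref{proposition: polynomial has small constant field} shows \(\wh K_f/K\) is small for any polynomial, and in case (b) the Chevalley--Weil argument of Lemma~\ref{lemma: tucker fix} together with Proposition~\ref{proposition: small implies descent}(b) does the same for PCF rational functions over number fields. I do not anticipate a real obstacle here; the only mildly delicate point is the bookkeeping between the geometric and arithmetic pfIMGs when descending from \(\wh K_f\) to \(K\), but this has already been isolated in the statement of Theorem~\ref{theorem: frattini specialization} and Proposition~\ref{proposition: five lemma decomposition}.
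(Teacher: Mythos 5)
Your proposal is correct and follows essentially the same route as the paper: descend hilbert subsets from \(\wh K_f\) to \(K\) via smallness, invoke Corollary~\ref{corollary: hilbertian constant field specialization}, upgrade to surjectivity over \(K\) via the \(L=K\) case of Theorem~\ref{theorem: frattini specialization}, and verify smallness in cases (a) and (b) exactly as you cite. The only detail the paper adds that you omit is excluding the (thin) post-critical set from the resulting hilbert subset so that each \(a\) genuinely gives a decomposition subgroup; this does not affect the argument.
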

\begin{proof}
    If \(K\) is hilbertian and \(\wh K_f\) is small over \(K\), then Proposition~\ref{proposition: small implies descent} ensures that \(\wh K_f\) is hilbertian and that its hilbert subsets also contain hilbert subsets of \(K\). The set \(\wh H\) of \(a\in \wh K_f\) of such that the specialization at \(t=a\) is surjective over \(\wh K_f\) is such a hilbert set and hence contains a hilbert subset \(H\) of \(K\). The critical orbit is a thin set, so \(H\) is still a hilbert subset after removing any points in the critical orbit. Therefore, there are infinitely many \(a\in K\) such that \(a\) is not a branch point and the specialization at \(t=a\) is surjective over \(\wh K_f\), and hence the specialization at \(t=a\) over \(K\) is surjective by \ref{theorem: frattini specialization}
\end{proof}

One way of framing the situation is as follows: based on~\cite{BGJT:specializations}, it is interesting to wonder whether Odoni's conjecture can be reduced to an open frattini subgroup for some large families of maps. Since the Frattini subgroup is pronilpotent, Theorem~\ref{theorem: self-similar upgrades} tells us that when the map is not an induced quotient, this requires a pronilpotent geometric profinite iterated monodromy. This leads us to expect that the argument will not work in as many cases, since pronilpotence is a strong limitation -- for polynomials, we saw in Corollary~\ref{corollary: open frattini implies pro-p} that this often implies pro-\(p\). In the induced quotient case, one expects for branch cycle reasons that the pfIMG will not typically be pronilpotent, even if they have an open Frattini subgroup, as is the case for twisted Chebyshev maps. 

While one might hope that there are other ways to reduce cases of Odoni's conjecture to hilberianity in non-pronilpotent situations, without using Frattini arguments, this turns out to be impossible in a precise sense. For example, when \(\bArb\) is the full automorphism group of the tree Dittmann and Kadets~\cite{DK:odoni} have shown that there are hilbertian fields over which no specialization is surjective. The full automorphism group of the tree is not pronilpotent, and in forthcoming work with Gotshall and Tucker, we show that non-pronilpotence alone is often sufficient to construct hilbertian fields for which surjectivity fails simultaneously for many families of rational maps.

\printbibliography
\end{document}